\numberwithin{equation}{section}					% Numbering order for equations
\numberwithin{figure}{section}						% Numbering order for figures
\numberwithin{table}{section}						% Numbering order for tables
\newcommand{\RR}{\mathbb{R}}
\newcommand{\RT}{\mathbf{RT}}
\newcommand{\BDM}{\mathbf{BDM}}
\newcommand{\FF}{\mathcal{F}}
\DeclareMathOperator{\dive}{div}
\DeclareMathOperator{\crl}{curl}
\newcommand{\jump}[1]{\llbracket #1 \rrbracket}
\newcommand{\bfV}{\mathbf{V}}
\newcommand{\bfH}{\mathbf{H}}
\newcommand{\bfx}{\mathbf{x}}
\newcommand{\bfu}{\pmb{u}}
\newcommand{\bfv}{\pmb{v}}
\newcommand{\bff}{\pmb{f}}
\newcommand{\bfg}{\pmb{g}}
\newcommand{\bfn}{\pmb{n}}
\newcommand{\bft}{\pmb{t}}
\newcommand{\mesh}{\mathcal{T}_h}
\newcommand{\mcT}{\mathcal{T}}
\newcommand{\mcG}{\mathcal{G}}
\newcommand{\mcM}{\mathcal{M}}
\newcommand{\mcE}{\mathcal{E}}
\newcommand{\mcF}{\mathcal{F}}
\newcommand{\lagp}{\alpha}
\newcommand{\uB}{\pmb{g}}
\newcommand{\kp}{\hat{k}}%{k-1}
\newcommand{\mcTT}{\mcT_{h}(T_L)} % macro element patch to T
\newcommand{\mcTL}{\mcT_{H}} %{\mcT_{H,i}}
\newcommand{\mcFM}{\mcF_{h}(M)} % interior faces to macro element M
\newcommand{\diff}{\mathop{ }\mathopen{ }\mathrm{d}}
\newcommand{\restreinta}[1]{\mathclose{}|\mathopen{}_{#1}}
\newcommand{\Wk}{W_{k+1,h}}
\newcommand{\Vk}{\bfV_{k,h}}
\newcommand{\Vkg}{\bfV_{k,h,{\pmb{g}}}}
\newcommand{\Vkz}{\bfV_{k,h,{0}}}
\newcommand{\Qk}{Q_{\kp,h}}
\newcommand{\Qkk}{Q_{k,h}}
\newcommand{\Qkkg}{Q_{\tilde{k},h}^{\partial\Omega}}
\newcommand{\Af}{\mathcal{A}}
\newcommand{\Bf}{\mathcal{B}}
\newcommand{\Omdh}{\Omega_{\mcT_{h}}}
\newcommand{\nDj}{D^j_{\bfn_F}}
\newcommand{\avgut}{\{\mu\nabla(\bfu_h\cdot \pmb{t})\cdot\bfn\}}
\newcommand{\avgvt}{\{\mu\nabla(\bfv_h\cdot \pmb{t})\cdot\bfn\}}
\newcommand{\jumput}{\jump{\bfu_h\cdot \pmb{t}}}
\newcommand{\jumpvt}{\jump{\bfv_h\cdot \pmb{t}}}
\spnewtheorem{prop}{Proposition}{\bf}{\it}
\spnewtheorem{thm}{Theorem}{\bf}{\it}
\begin{document} 

\title{Divergence-free cut finite element methods for Stokes flow \thanks{This research was supported by the Swedish Research Council Grant No. 2018-05262 and the Wallenberg Academy Fellowship KAW 2019.0190.}}
% \author[1]{Thomas Frachon}
% \author[2]{Erik Nilsson\corref{cor1}%
% 	} 
% \author[3]{Sara Zahedi}
% \cortext[cor1]{Corresponding author}
% \fntext[fn1]{Department of Mathematics, KTH Royal Institute of Technology,  SE-100 44 Stockholm, Sweden (frachon@kth.se, erikni6@kth.se, sara.zahedi@math.kth.se).}
\author{Thomas Frachon \and Erik Nilsson * \and Sara Zahedi}
\institute{
     Department of Mathematics,  KTH Royal Institute of Technology,  SE-100 44 Stockholm, Sweden.  \\
     *\email{erikni6@kth.se} 
    }
% \cortext[cor1]{Corresponding author}
% \date{Received: 26/12/2023}
\date{31/08/2024}

\maketitle

\begin{abstract}
% ABSTRACT
We develop two unfitted finite element methods for the Stokes equations based on $\bfH^{\dive}$-conforming finite elements. Both cut finite element methods exhibit optimal convergence order for the velocity, pointwise divergence-free velocity fields, and well-posed linear systems, independently of the position of the boundary relative to the computational mesh. The first method is a cut finite element discretization of the Stokes equations based on the Brezzi-Douglas-Marini (BDM) elements and involves interior penalty terms to enforce tangential continuity of the velocity at interior edges in the mesh. The second method is a cut finite element discretization of a three-field formulation of the Stokes problem involving the vorticity, velocity, and pressure and uses the Raviart-Thomas (RT) space for the velocity. We present mixed ghost penalty stabilization terms for both methods so that the resulting discrete problems are stable and the divergence-free property of the $\bfH^{\dive}$-conforming elements is preserved also on unfitted meshes. In both methods boundary conditions are imposed weakly. We show that imposing Dirichlet boundary conditions weakly introduces additional challenges; 1) The divergence-free property of the RT and the BDM finite elements may be lost depending on how the normal component of the velocity field at the boundary is imposed. 2) Pressure robustness is affected by how well the boundary condition is satisfied and may not hold even if the incompressibility condition holds pointwise. We study two approaches of weakly imposing the normal component of the velocity at the boundary; we either use a penalty parameter and Nitsche's method or a Lagrange multiplier method. We show that appropriate conditions on the velocity space has to be imposed when Nitsche's method or penalty is used. Pressure robustness can hold with both approaches by reducing the error at the boundary but the price we pay is seen in the condition numbers of the resulting linear systems, independent of if the mesh is fitted or unfitted to the boundary. 

\keywords{mass conservation \and mixed finite element methods \and unfitted, Stokes equations \and cut finite element method \and pressure robustness \and vorticity-velocity-pressure formulation}
\subclass{65N30 \and 65N85 \and 65N22}

\end{abstract}

\section{Introduction}\label{sec: intro}
Computational methods for simulations of incompressible fluids that fail to produce pointwise divergence-free velocity approximations may result in numerical solutions with large errors and instabilities \cite{VolkerDivConstr2017}.  In this paper our aim is to present unfitted finite element discretizations of the Stokes equations that produce pointwise divergence-free velocity approximations, independently of how the boundary is positioned relative to the computational mesh.

Let $\Omega \subset \RR^d$ be an open bounded domain with a piecewise smooth boundary $\partial\Omega$, occupied by an incompressible fluid. Denote by $\bfn$ the normal vector to $\partial \Omega$. Let $\mathbf{L}^2(\Omega)$ be the space of vector-valued functions with each component in $L^2(\Omega)$. Given a source $\pmb{f}\in \mathbf{L}^2(\Omega)$, boundary data $\uB\in \mathbf{L}^{2}(\Omega)$ such that $\int_{\partial \Omega} \uB \cdot \bfn =0$, and fluid viscosity $\mu>0$, we seek fluid velocity $\bfu: \Omega \rightarrow \RR^d$ and pressure $p: \Omega \rightarrow \RR$ satisfying the Stokes equations:
\begin{alignat}{2}
	-\dive(\mu \nabla\bfu - p\pmb{I}) & = \pmb{f} 
	&& \text{ in } \Omega, \label{eq:momentum-balance} \\
	\hfill \dive\bfu & =  0
	&&\text{ in } \Omega, \label{eq:conservation} \\
	\hfill \bfu & = \pmb{g}
	&&\text{ on } \partial\Omega. \label{eq:BCu} 
\end{alignat}

Unfitted discretizations can be an alternative to standard fitted finite element discretizations when the domain $\Omega$ is complicated to mesh. In applications where the boundary $\partial \Omega$ or interfaces (internal boundaries) are evolving discretizations that can avoid remeshing processes are of great interest, see e.g. \cite{FrZa19, FraZah23} for unfitted discretizations of two-phase flow problems.

For the Stokes equations Cut Finite Element Methods (CutFEM) with optimal rates of convergence of the approximate velocity and pressure have been developed, see e.g.  \cite{BecBurHan09, HaLaZa14, KirGroReu16}  for the Stokes interface problem and see e.g. \cite{BurHan14, MaLaLoRo14} for the Stokes fictitious domain problem.  A popular strategy for ensuring that the discretization is robust and the discrete problem is well-posed independent of the position of the boundary or the interface relative to the computational mesh, is to add ghost penalty terms \cite{Bu10} in the weak form, see for example the works mentioned above. The ghost penalty terms also ensure that the condition number of the resulting matrix from the cut finite element discretization scales with mesh size as for the fitted finite element discretization. An alternative to adding stabilization terms in the weak formulation is to agglomerate elements (merge elements). In that case one needs to have methods that can deal with the general shape that agglomerated elements can have, see e.g. \cite{BurDelErn20}.

Existing cut finite element discretizations for the Stokes problem are mostly based on popular inf-sup stable element pairs such as the Taylor-Hood elements see~\cite{KirGroReu16}, mini-elements \cite{BurHan14}, or the P1 iso P2-P1 elements~\cite{HaLaZa14} and do not result in pointwise divergence-free approximations. Recently, unfitted discretizations for the Stokes problem based on finite element spaces that result in pointwise divergence-free velocity approximations and are conforming in $\bfH^1$ have been developed and analyzed in \cite{LiuNeiOls23, BurHanLarst23}. In~\cite{LiuNeiOls23} a cut finite element discretization based on the Scott-Vogelius pair is proposed. However, the approximate velocity is pointwise divergence-free only outside a band around the unfitted boundary. In \cite{BurHanLarst23} unfitted discretizations are developed using a low order divergence-free element. The proposed discretizations in \cite{BurHanLarst23}  do inherit the pointwise divergence-free property of the underlying element but fail to be pressure robust. It is unclear if the condition numbers for the linear systems associated to the unfitted discretizations in \cite{BurHanLarst23} are controlled and scale with mesh size as for the fitted method.

The construction of efficient divergence-free mixed finite element discretizations of the Stokes problem is not trivial, especially not on general triangulations and in three space dimensions. Several strategies exists, with their pros and cons, see e.g. \cite{VolkerDivConstr2017} and references therein. The Scott-Vogelius finite element pair which was used in~\cite{LiuNeiOls23}, where continuous piecewise polynomials of degree $k$ are used for approximating the velocity and discontinuous piecewise polynomials of degree $(k-1)$ are used for the pressure, has the advantage that on special triangulations the element pair is inf-sup stable in $\RR^d$ for $k\geq d$. On unfitted meshes these triangulations are not complicated to contruct and thus this element pair can provide a simple and good base for an inf-sup stable and divergence-free CutFEM for the Stokes problem. Our experience is that using the stabilization terms we propose in \cite{FraHaNilZa22} instead of the ones in~\cite{LiuNeiOls23} it is possible to modify the discretization in~\cite{LiuNeiOls23} to retain optimal divergence-free velocity fields in the entire computational domain using the Scott-Vogelius pair. However, to obtain optimal accuracy a high order representation of the boundary and high order integration on cut elements are required. In this work we choose to study in more detail two first and second order accurate discretizations of the Stokes problem and focus on studying how to guarantee unfitted discretizations to inherit the divergence-free property of the underlying element and being pressure robust. In this paper, we work in two space dimensions, i.e., $d=2$. However, we believe our main findings and conclusions are valid also in three space dimensions and are of interest also when other divergence-free elements are used. 
 %We limit our numerical study to two space dimensions in this paper, mainly since one of the presented methods is nontrivial to extend to three dimensions.}

A way to bypass the challenge of constructing conforming, inf-sup stable, and divergence-free elements is to develop discretizations based on $\bfH^{\dive}$-conforming elements. The Brezzi-Douglas-Marini (BDM) space and Raviart-Thomas (RT) space are two $\bfH^{\dive}$-conforming finite element spaces \cite{BrFo12}. Together with their appropriate pressure spaces they form inf-sup stable pairs for the Stokes equations. However, these spaces are non-conforming with respect to $\bfH^1$. One approach of handling the lack of smoothness of these spaces has been to modify the weak formulation using interior penalty terms as in discontinuous Galerkin methods to enforce tangential continuity across interior edges of the mesh~\cite{WangHdiv2007, CocKanSch07}. We combine this strategy (presented for fitted meshes) with ghost penalty stabilization terms we developed in \cite{FraHaNilZa22} and present a cut finite element discretization for the Stokes problem based on the BDM space for the velocity with the following properties: 
\begin{enumerate}
\item Optimal rates of convergence of the approximate velocity
  and pressure;
\item Well-posed linear systems where the condition number of the system matrix scales as for the fitted finite element discretization;
\item Pointwise divergence-free velocity approximations;
	%The third property is tied to the fourth \cite{VolkerDivConstr2017}; indeed $3.\implies 4.$ in standard finite element methods (FEM). 
	%\item pressure robustness
\end{enumerate}

We also propose an unfitted discretization based on the vorticity-velocity-pressure formulation of the Stokes equations. For some boundary conditions this formulation is beneficial \cite{Dub02,Hanot2021} since discretizations that fall into an appropriate de Rham complex have been developed for fitted meshes~\cite{Hanot2021} using the Raviart-Thomas (RT) space for the velocity. We develop a cut finite element discretization for this formulation based on the Raviart-Thomas space for the velocity field and propose new ghost penalty stabilization terms. We study two strategies for imposing Dirichlet boundary conditions weakly. The tangential component of the velocity field is imposed as a natural boundary condition but the normal component, $\bfu \cdot \bfn$,  is weakly imposed either via a penalty parameter or via a Lagrange multiplier variable.

Our challenge has been to develop unfitted discretizations that have all the three properties above. We have chosen to utilize $\bfH^{\dive}$-conforming elements in order to be able to also obtain high order approximations for the Stokes equations. In order to guarantee pointwise divergence-free approximations we show that it is necessary to use proper stabilization terms and to guarantee that the approximate velocity $\bfu_h$ satisfies $\int_{\partial \Omega} \bfu_h \cdot \bfn=0$. 
%an appropriate condition has to be posed on the velocity space. 

The third property above usually implies pressure robustness for standard mixed finite element methods \cite{VolkerDivConstr2017}. Our numerical results indicate that this is only the case when the normal component $\bfu \cdot \bfn$ at the boundary is prescribed strongly. Thus, when the boundary condition is imposed weakly pressure robustness can be lost due to the error at the boundary. 
We do not analyze the proposed methods in this paper but we provide a careful numerical study that illustrates challenges for unfitted discretizations. In particular, we discuss errors due to weak imposition of essential boundary conditions and study how to reduce these errors.

The outline of the paper is as follows. In Section \ref{sec:Nummeth} we introduce the computational mesh and the finite element spaces which our discretizations will be built upon. In Section \ref{sec:method_wang}  we present a discretization for the standard velocity-pressure formulation and in Section \ref{sec:method_vort} we present two dicretizations of the vorticity-velocity-pressure formulation of the Stokes problem. In Section~\ref{sec: the divfree prop} we show that the proposed discretizations results in velocity approximations that are pointwise divergence-free. We discuss the influence on the divergence-free property of the approximate velocity when Dirichlet boundary conditions are enforced weakly in Section~\ref{sec:lagmult}. Efficient implementations of the stabilization terms are discussed in Section~\ref{sec:equivalent_stab}. We present results from numerical experiments in Section \ref{sec:numex} and finally, in Section \ref{sec:conclusion} we summarize our findings.

\section{The finite element mesh and spaces} \label{sec:Nummeth}
We start by introducing the computational mesh and the finite element spaces we need in order to discuss our numerical methods.

\subsection{Mesh}\label{sec:mesh_struct}
Introduce a computational domain $\Omega_0\supseteq\Omega$ with polygonal boundary $\partial \Omega_0$. Let $\{ \mcT_{0,h} \}_{h}$ be a quasi-uniform family of simplicial meshes of $\Omega_0.$ We denote by $h$ the piecewise constant function that on element $T\in \mcT_{0,h}$ is equal to $h_T>0$, the diameter of $T$. Let $\max_{T\in\mcT_{0,h}} h_T < h_0$ with $h_0<<1$ small. Note that 
\begin{align}
\Omega_0=\bigcup_{T\in\mcT_{0,h}}T.
\end{align} 
We call $\mcT_{0,h}$ the background mesh, and from it we construct the active mesh 
\begin{align}
	\mesh:= \{T\in\mcT_{0,h} : |T\cap \Omega | > 0 \}
\end{align}	
on which we will define our finite element spaces. 
%Note that it is possible for $\mesh=\mcT_{0,h}$ but in general $\mesh\subset\mcT_{0,h}$. 
The active mesh $\mesh$ constitutes the active domain 
\begin{align}
	\Omdh := \bigcup_{T\in\mesh} T.
\end{align}	
Denote by $\mcE_h$ the set of edges that are shared by two elements in $\mesh$. 
The corresponding intersection with $\Omega$ is defined by
\begin{equation}
  \mcE_{h,\Omega} := \{F = E\cap \Omega:  E \in \mcE_h \}. 
\end{equation}

The background mesh $\mcT_{0,h}$ conforms to the fixed polygonal boundary $\partial \Omega_0$ but the active mesh $\mesh$ does not need to conform to $\partial\Omega$. 
%Denote by $\Gamma \subseteq \partial\Omega$ the unfitted part of the boundary $\partial\Omega$. When $\Gamma \neq \emptyset$ we assume that $\Gamma$ intersects the boundary $\partial T$ of an element $T \in \mcT_h$ exactly twice and each (open) edge at most once. Let $\mcG_h$ and $\mcG_h^\Gamma$ denote the set of elements intersected by $\partial \Omega$ and $\Gamma$, respectively, 
We assume that $\partial\Omega$ intersects the boundary $\partial T$ of an element $T \in \mcT_h$ exactly twice and each (open) edge at most once. 
Let $\mcG_h$ denote the set of elements intersected by $\partial \Omega$, 
\begin{align}
  \mcG_h := \left\{T \in \mcT_h : | \partial \Omega \cap \overline{T}| > 0 \right\}.%, \quad \mcG_h^\Gamma = \left\{T \in \mcT_h : | \Gamma \cap \overline{T}| > 0 \right\}.
\end{align}
%Let $\mcF_h$ denote the set of edges in $\mcG_h^\Gamma$ that are shared by two elements in $\mesh$. These sets are illustrated in Figure \ref{fig:static active mesh} where $\Gamma=\partial \Omega$. 
%\ekn{See also Figure \ref{fig:presrob_unfitted_mesh} for an example of a mesh where $\Gamma$ is a proper subset of $\partial\Omega$.}
Let $\mcF_h$ denote the set of edges in $\mcG_h$ that are shared by two elements in $\mesh$, and denote by $\mcF_{h,\partial\Omega}$ the set of interior edges in $\mcG_h$. These sets are illustrated in Figure \ref{fig:static active mesh}.
\begin{figure}[h!]
\centering
	% \begin{subfigure}[b]{0.3 \textwidth}  	 	 	
	% \centering	
	% \includegraphics[scale=0.13]{staticActiveMesh_Omega1.pdf} 
	% \end{subfigure}
	\begin{subfigure}[b]{0.3 \textwidth}  	 	 	
	\centering	
	\includegraphics[scale=0.15]{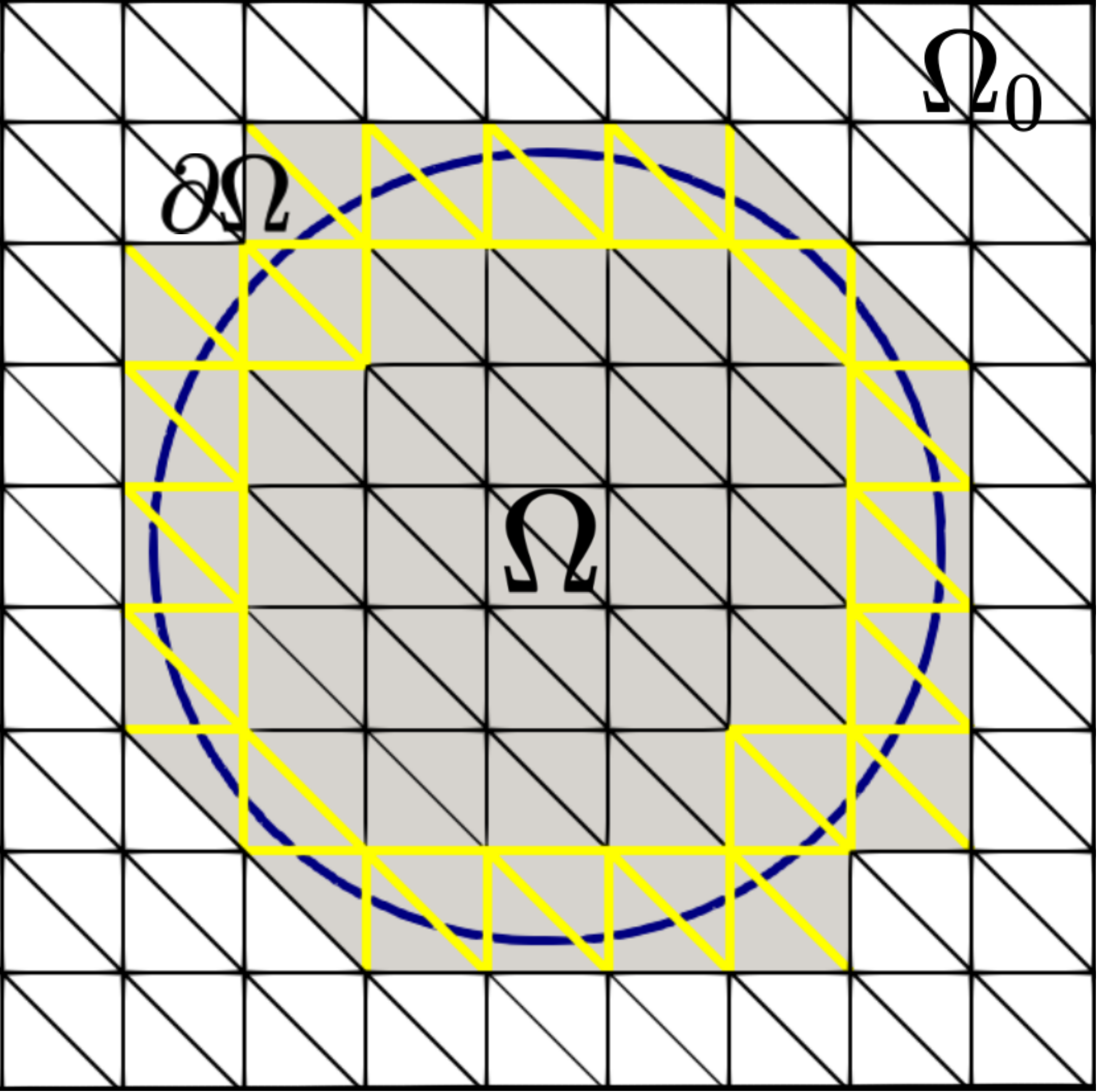} 
	\end{subfigure}
	\begin{subfigure}[b]{0.3 \textwidth}  	 	 	
	\centering	
	\includegraphics[scale=0.15]{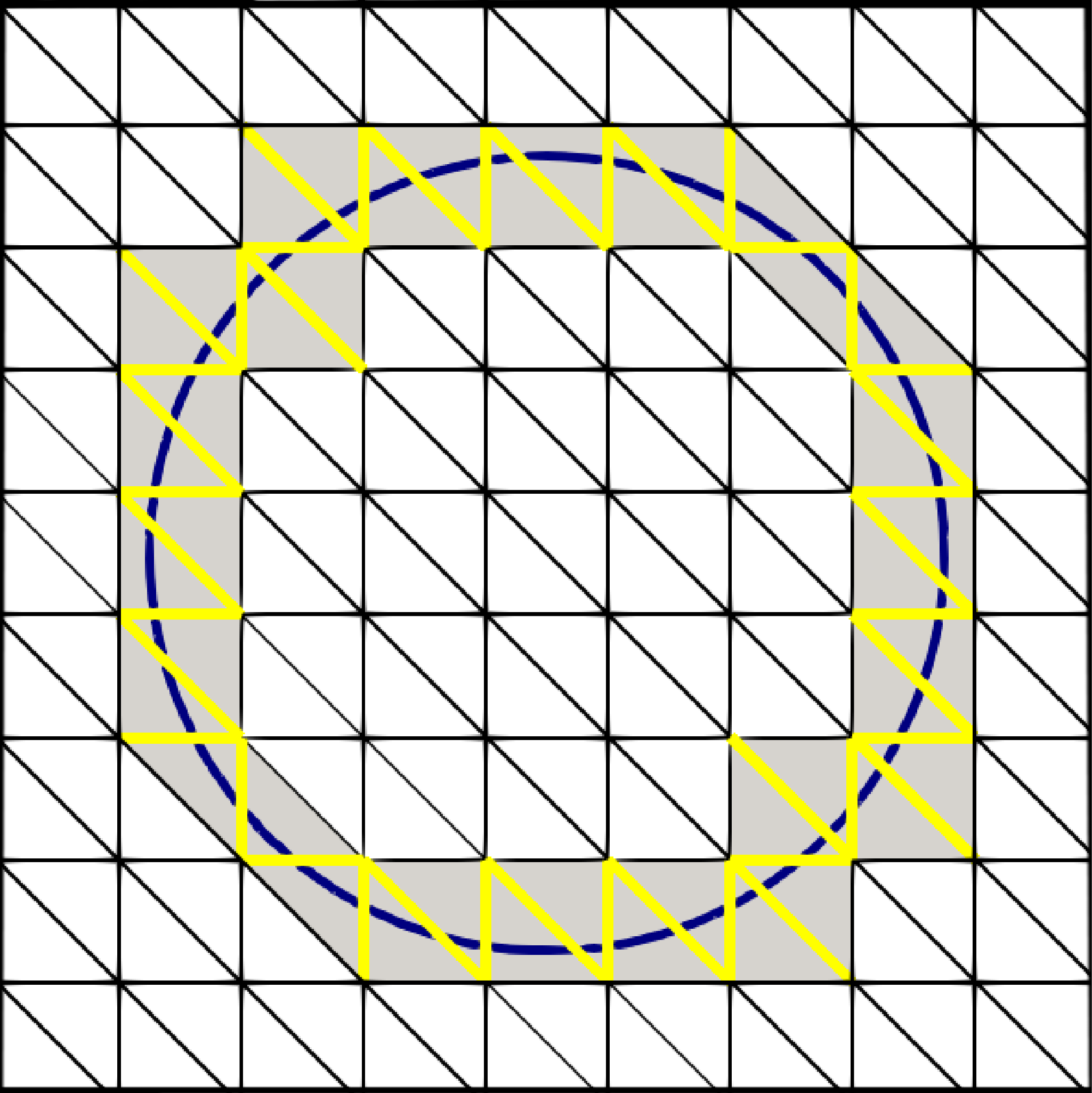} 
	\end{subfigure}
      \caption{Illustration of the active mesh. Left: The grey triangles are the elements in the set $\mesh$, which constitutes the active mesh $\Omega_{\mcT_h}$ and the yellow edges are edges in $\mcF_{h}$. Right: The grey triangles are in the set $\mcG_h$ and the yellow edges are edges in $\mcF_{h,\partial\Omega}$.}  
        \label{fig:static active mesh}      	
\end{figure}

\subsection{Notation}
Let $D\subset\RR^2$. We will work with the Sobolev spaces 
\begin{align}
  \bfH^{1}(D) &:= \left\{\bfv\in\mathbf{L}^2(D) : \nabla\bfv\in \mathbf{L}^2(D)\right\}, \\
  %L^2(D)^{2\times 2}
	\bfH^{\dive}(D) &:= \left\{\bfv\in\mathbf{L}^2(D) : \dive\bfv\in L^2(D)\right\}, \label{eq:Hdiv}\\
	H^{1}(D) &:= \{v\in L^2(D) : \nabla v\in \mathbf{L}^2(D) \}.
\end{align}
We use boldface letters for vector-valued functions and their associated spaces. For a vector field $\bfu=\begin{bmatrix}
	u_1 \\ u_2
\end{bmatrix}$ we set $\crl\bfu:=\partial_2 u_1 - \partial_1 u_2$, and for a scalar $p$ we set $\crl p = \begin{bmatrix}-\partial_2 p\\ \partial_1 p\end{bmatrix}$. Note that $H^{\crl}(D):=\{p\in L^2(D) : \crl p \in L^2(D)\}\cong H^1(D),$ where $\cong$ denotes isomorphism between Hilbert spaces.

%\subsubsection{Mesh-dependent notation}
We use the notation 
\begin{equation} \label{eq:innerp}
	(v,w)_{D}=\int_{D} v(\bfx) w(\bfx) \diff \bfx
\end{equation}
for the $L^2$-inner product on the domain $D$. The induced norm is denoted by $\|\cdot \|_{D}$. 
Regarding normal vectors, we take the convention that the normal vector $\bfn$ to $\partial\Omega$ is outward pointing. We also introduce a unique normal $\bfn$ to each edge $F=\partial T \cap \partial T'$, where $T, T' \in\mesh$. 
%The normals of the edges on $\partial\Omdh$ point outward.
We define the tangent vector $\bft$ as the counter-clockwise rotation by $\pi/2$ of $\bfn$. 
%For a vector field $\bfv$, $\bfv\cdot \bft = \bfn\times\bfv$.
We define the jump and average operators of a scalar function $p:\Omdh\to\RR$ at $F=\partial T \cap \partial {T'}$ by
\begin{align}
	\jump{p} &= (p|_T - p|_{T'})\restreinta{F} \label{eq:jump}, \\
	\{p \} &= 1/2 (p|_T + p|_{T'})\restreinta{F}. \label{eq:aver}
\end{align} 
We use identical notations for vector-valued functions.% but those functions are represented using bold letters.

For two functions $a,b$ we write $a\lesssim b$ if and only if $a\leq c b$ for some constant $c$ independent of $h$ and how the boundary $\partial\Omega$ cuts the mesh $\mesh$. Similarly we write $a\gtrsim b$.

\subsection{Discrete spaces}\label{sec:spaces}
Given an element $T\in\mesh$, let $Q_k(T)$ denote the space of polynomial functions on $T$ of degree less than or equal to $k\geq 0$. Define the following finite element spaces on the mesh $\mesh$:
\begin{align} \label{eq:Vi}
	\Wk &:= \left\{\omega_h\in H^{1}(\Omega_{\mcT_{h}}): \ \omega_h\lvert_T\ \in Q_{k+1}(T), \ \forall T\in \mcT_{h} \right\}, \\
	\Vk & :=\left\{\bfv_h\in \bfH^{\dive}(\Omega_{\mcT_{h}}): \ \bfv_h\lvert_T\ \in \bfV_k(T), \ \forall T\in \mcT_{h} \right\}, \\
	\Qk & := \bigoplus_{T \in \mcT_{h}} Q_{\kp}(T).
\end{align}
Here $\Wk$ is the standard space of continuous piecewise polynomials of degree less than or equal to $k+1\geq 1$,  $\bfV_k(T)$ is either the Raviart-Thomas space $\RT_{k}(T)$ or the Brezzi-Douglas-Marini space $\BDM_{k}(T)$, and $\Qk$ is the space of discontinuous piecewise polynomial functions of degree less than or equal to $\kp$. 
Note that having chosen $\Vk$ then $\Qk$ is implicitly chosen. That is 
when $\bfV_k=\RT_{k}$, we take $\kp=k\geq 0$ and when $\bfV_k=\BDM_k$, we take $\kp=k-1 \geq 0$.  For the definition of these spaces we refer to for example Chapter III.3 in \cite{BrFo12} or to \cite{BofBreFor13}. 
The space $\Vk$ is the $\bfH^{\dive}$-conforming finite element space we consider for the approximation space of the velocity and $\Qk$ is the approximation space for the pressure.

In case of Dirichlet boundary conditions on $\partial \Omega$ the pressure is only determined up to a constant. We therefore introduce the following 
subspace of $\Qk$
\begin{align}
	\Qk^0 & := \left\{q_h\in\Qk: \int_{\Omega} q_h = 0 \right\}.\label{eq:defQ0}
\end{align}
When the boundary condition is imposed weakly using a penalty parameter or Nitsche's method we will need the following subspace of our discrete space $\Vk$ 
%There are different options for how to prescribe this constant and we will discuss this aspect in Section \ref{sec:lagmult}. Here we introduce the following  
%two subspaces of our discrete spaces $\Vk$ and $\Qk$
\begin{align}
  \Vk^0 & :=\left\{\bfv_h\in\Vk: \int_{\partial \Omega} \bfv_h\cdot\bfn = 0 \right\}.
  %\ \bfv_h\cdot\bfn|_{\partial \Omega \setminus \Sigma} = 0 \right\}. \label{eq:defV0} 
	%\Qk^0 & := \left\{q_h\in\Qk: \int_{\Omega} q_h = 0 \right\}.
\end{align}
We will discuss why we need $\Vk^0$ in Section \ref{sec:lagmult}. 
%For the imposition of Dirichlet boundary conditions weakly we will study two approaches. We either use a penalty parameter as in Nitsche's method or we use a Lagrange multiplier variable.We discuss this aspect and how it influences the finite element test space for the velocity in Section \ref{sec:lagmult}. Here we introduce

%where $\Sigma$ is the part of the boundary where the Dirichlet boundary condition is imposed weakly via penalty. In this paper we only consider the case where $\Sigma=\partial \Omega$ or  $\Sigma=\emptyset$. Whenever nothing else is explicitly mentioned we have $\Sigma=\partial \Omega$.  

%pressure variable when enforcing \ekn{essential} boundary conditions on the velocity, see \eqref{eq:BCu}. We will discuss this important aspect in Section \ref{sec:lagmult}. 
%For $\Vk^0$ an example is $\RT_{k,h}^0 = \left\{\bfv_h\in\RT_{k,h}: \int_{\partial\Omega} \bfv_h\cdot\bfn = 0 \right\}.$
  
\section{A non-conforming mixed method - Method NC}
\label{sec:method_wang}
Functions in $\Vk$ are tangentially discontinuous vector-valued functions, meaning that for $\bfv \in \Vk$, the tangential component $\bfv\cdot\pmb{t}$ is a discontinuous function across interior edges in the mesh. The space $\Vk$ lacks the smoothness to be conforming with respect to $\bfH^1(\Omdh)$, thus $\Vk\not\subset\bfH^1(\Omdh)$. One approach of handling the lack of smoothness is to modify the weak formulation using interior penalty terms to enforce tangential continuity across all interior edges of the mesh, see \cite{WangHdiv2007, WangRobust2009}. Following this approach we present a cut finite element discretization of the Stokes problem~\eqref{eq:momentum-balance}-\eqref{eq:BCu}. Find $(\bfu_h,p_h)\in \Vk \times \Qk^0$ such that 
\begin{alignat}{2}
    \Af(\bfu_h,\bfv_h) - \Bf(\bfv_h,p_h) &= \FF (\bfv_h),\ &&\text{ for all } \bfv_h\in\Vk^0, \label{eq:mNC_discreteStokes} \\
     \Bf_0(\bfu_h,q_h) &= 0, &&\text{ for all } q_h\in\Qk. \label{eq:mNC_conservation}
\end{alignat}
Here 
\begin{align}
   &\Af(\bfu_h,\bfv_h) := a(\bfu_h,\bfv_h) + s_{a}(\bfu_h,\bfv_h), \label{eq:Af} \\
   & \Bf(\bfu_h,q_h) := b(\bfu_h,q_h)+s_b(\bfu_h,q_h), \label{eq:Bf} \\
   & \Bf_0(\bfu_h,q_h) := b_0(\bfu_h,q_h)+s_b(\bfu_h,q_h), \label{eq:Bf0}
\end{align}

\begin{align}
  a(\bfu_h,\bfv_h) &:=\sum_{T\in\mesh} (\mu \nabla\bfu_h,\nabla\bfv_h)_{T \cap \Omega} \nonumber \\
 &\quad +\sum_{T\in \mcG_h} \left( -(\mu\nabla\bfu_h\cdot\bfn,\bfv_h)_{T\cap \partial\Omega} + (\bfu_h,\mu\nabla\bfv_h\cdot\bfn)_{T \cap \partial\Omega}+(\lambda_{\bfu} h^{-1}\bfu_h,\bfv_h)_{T\cap \partial\Omega}\right)%+  \nonumber\\
  +t(\bfu_h,\bfv_h), \label{eq:a}\\
	t(\bfu_h,\bfv_h) &:= \sum_{F\in \mcE_{h,\Omega}} \left( -(\avgut,\jumpvt)_F + (\jumput,\avgvt)_F + (\lambda_t h^{-1}\jumput,\jumpvt)_F \right), \label{eq:t}\\
	b_0(\bfv_h,p_h) &:= (\dive \bfv_h,p_h)_{\Omega}, \label{eq:b0} \\%\sum_{T\in\mesh} (\dive \bfv_h,p_h)_{T \cap \Omega}, \label{eq:b0} \\
    b(\bfv_h,p_h) &:=b_0(\bfv_h,p_h) -\sum_{T\in \mcG_h}(\bfv_h\cdot\bfn,p_h)_{T \cap \partial\Omega}, \label{eq:b} \\
	\FF(\bfv_h) &:= (\pmb{f},\bfv_h)_{\Omega}%\sum_{T\in\mesh}  (\pmb{f},\bfv_h)_{T \cap \Omega}
	+ \sum_{T\in \mcG_h}\left( (\lambda_{\bfu} h^ {-1}\uB,\bfv_h)_{T \cap \partial\Omega}+(\uB,\mu\nabla\bfv_h\cdot\bfn)_{T \cap \partial\Omega} \right),
	\label{eq:f0}
\end{align}
where $\lambda_t$ and $\lambda_{\bfu}$ are penalty parameters and chosen to be positive constants and we use the mixed ghost penalty stabilization terms developed in \cite{FraHaNilZa22}
\begin{align}
  s_{a}(\bfu_h,\bfv_h) &:=\sum_{F \in \mcF_h}\sum_{j=0}^{k+1}
  %\sum_{M \in \mcM_h} \sum_{F \in \mcFM}\sum_{j=0}^{\hat{k}+1}
	\tau_{a} h^{2j+\gamma_a}  (\jump{\nDj \bfu_{h} }, \jump{\nDj \bfv_{h}})_{F}, \label{eq:stab-u}\\
	s_b(\bfu_h,q_h) &:=\sum_{F \in \mcF_h}\sum_{j=0}^{\hat{k}}
        %\sum_{M \in \mcM_h} \sum_{F \in \mcFM}\sum_{j=0}^{\hat{k}}
	\tau_b h^{2j+1}  ( \jump{D^j (\dive \bfu_{h})}, \jump{D^j q_{h}} )_{F}. \label{eq:stab-b}
\end{align}
Note that $\jump{D^j q_{h}}$ denotes the jump in the generalised derivative of order $j$ across the edge $F$. Similarly, $\jump{D^j_{\bfn_F} \bfv_{h}}$ denotes the jump in the normal component of the generalised derivative, with $\jump{D^0_{\bfn_F} \bfv_{h}} = \jump{\bfv_{h}}$.  The stabilization parameters $\tau_{a}>0, \tau_{b}>0,$ and we choose $\gamma_a=-1$ for this method.

To derive this method we multiplied the Stokes problem with test functions $(\bfv_h, q_h)\in \Vk^0 \times \Qk$ (we clarify why we choose these test spaces and not for example $\Vk \times \Qk^0$ in Section \ref{sec:lagmult}), applied integration by parts elementwise, and added terms in the weak form to enforce tangential continuity and the Dirichlet boundary condition weakly following Nitsche's method. We have chosen to make the bilinear form $a(\bfu_h,\bfv_h)$ anti-symmetric. The advantage of the non-symmetric version of the interior penalty method and Nitsche's method is that the choice of the penalty parameter is less sensitive than in the symmetric version, see e.g. \cite{FreSte95, BoiBur16}. Important to note is that the presented method is non-symmetric due to the difference between $\Bf$ and $\Bf_0$. We use $\Bf_0$ since we do not want to perturb the divergence condition, see Theorem~\ref{thm:divfree}. Enforcing the boundary conditions with a Lagrange multiplier (instead of Nitsche's method), as in \cite{BurHanLarst23}, would lead to a symmetric method. See also Method C2 in Section~\ref{sec:method_vort}.

Since the proposed scheme is based on the boundary-fitted scheme of \cite{WangHdiv2007,WangRobust2009}, the convergence order of the pressure variable is expected to be optimal with respect to the larger pair $\BDM_{k+1}\times Q_k$ but not with respect to the pair $\RT_{k}\times Q_k$. For the Raviart-Thomas space the method is expected to achieve a convergence in the pressure of order $\kp$. See the a priori analysis in \cite{WangHdiv2007,VolkerDivConstr2017} for details.

\section{A conforming mixed vorticity method - Method C}\label{sec:method_vort}
We start by rewriting the velocity-pressure formulation \eqref{eq:momentum-balance}-\eqref{eq:BCu} of the Stokes equations by introducing the vorticity. We then present two conforming numerical discretizations of the vorticity-velocity-pressure formulation of the Stokes equations. In the first discretization a penalty parameter is utilized to enforce the boundary condition and we refer to this method as Method C1 while in the second discretization, Method C2 a Lagrange multiplier is used.

We introduce a new variable, the vorticity
\begin{equation}
  \omega := \mu\crl\bfu.
\end{equation}
We make use of the incompressibility $\dive \bfu=0$ to write $-\dive(\mu\nabla\bfu) = \crl(\mu\crl \bfu) -\nabla(\mu\dive\bfu)=\crl\omega.$ The Stokes problem \eqref{eq:momentum-balance}-\eqref{eq:BCu} becomes \cite{Dub02,Hanot2021}
\begin{align}\label{eq:curlStokes}
	&\left\{
	\begin{aligned}
		\mu^{-1}\omega - \crl\bfu &= 0,&&\text{ in } \Omega, \\
		\crl\omega+\nabla p &= \bff,&&\text{ in } \Omega, \\
		\dive\bfu & =  0,&&\text{ in } \Omega, \\
		\bfu & = \pmb{g},&&\text{ on } \partial\Omega.
	\end{aligned}
	\right.
\end{align}
Note that the following exact sequence holds
\begin{align}
	0 &\hookrightarrow H^1(\Omega) \overset{\crl}{\to} \bfH^{\dive}(\Omega) \overset{\dive}{\to} L^2(\Omega) \to 0 \label{eq:2Dcomplex}
\end{align}
That the sequence above is exact means that the kernel of a given operator in the sequence above is the image of the preceding operator in the same sequence. In particular, the divergence of the curl of a scalar field in two space dimensions is $0$ by virtue of the commutativity of $x-$ and $y$-derivatives. 

Multiplying the second equation of \eqref{eq:curlStokes} with a test function $\bfv\in\bfH^{\dive}(\Omega)$ and integrating by parts we get 
\begin{equation}
	\int_{\Omega}\crl\omega\cdot\bfv + \int_{\partial\Omega}p\bfv\cdot\bfn - \int_{\Omega}p\dive \bfv = \int_{\Omega}\bff\cdot\bfv. \label{eq:curlw}
\end{equation}
%The anti-symmetric term to the first term of \eqref{eq:curlw} is obtained from considering $\mu^{-1}\omega-\crl\bfu=0.$ 
Multiplying the first equation of \eqref{eq:curlStokes} with $\phi\in\mathcal{H}(\Omega):=H^1(\Omega)$, and integrating by parts we have 
\begin{equation}
	\int_{\Omega}\mu^{-1}\omega\phi
	 - \int_{\Omega} \bfu\cdot\crl\phi = \int_{\partial\Omega}\bfg\cdot \bft \phi.
\end{equation}
Since the term $-(\bfu\cdot\bft,\phi)_{\partial\Omega}$ appears naturally in the integration by parts, we handle the boundary condition \eqref{eq:BCu} such that the tangential component $\bfu\cdot\bft$ is treated as a natural boundary condition. The normal component $\bfu\cdot\bfn$ can be enforced strongly or weakly. We define the space $\bfH^{\dive}_{\bfg}(\Omega)$ as the space of vector fields in $\bfH^{\dive}(\Omega)$ (see \eqref{eq:Hdiv}) that satisfy (in the distributional sense) the boundary condition $\bfu\cdot\bfn=\bfg\cdot\bfn$ on $\partial\Omega$. 
The weak formulation of \eqref{eq:curlStokes} when  $\bfu\cdot\bfn$ is imposed strongly, i.e. in the space, reads as follows. Find $(\omega,\bfu,p)\in \mathcal{H}(\Omega)\times\bfH^{\dive}_{\bfg}(\Omega)\times L^2(\Omega)$ such that
\begin{alignat}{2}
	(\mu^{-1}\omega,\phi)_{\Omega} - (\bfu,\crl\phi)_{\Omega} &= (\bfg\cdot\bft,\phi)_{\partial\Omega}, \ &&\forall \phi\in \mathcal{H}(\Omega), \\
	(\crl\omega,\bfv)_{\Omega} + (p,\bfv\cdot\bfn)_{\partial\Omega} - (p,\dive\bfv)_{\Omega} &= (\bff,\bfv)_{\Omega},\ &&\forall \bfv\in\bfH^{\dive}_0(\Omega), \\
%	(\lambda\bfu\cdot\bfn,\bfv\cdot\bfn)_{\partial\Omega} &= (\lambda \bfg\cdot\bfn,\bfv\cdot\bfn)_{\partial\Omega}, \ &&\forall \bfv\in \bfH^{\dive}(\Omega), \label{eq:BCun} \\
	(\dive\bfu,q) &= 0, \ &&\forall q\in L^2(\Omega).
\end{alignat}
In the next section we develop two unfitted finite element schemes where $\bfu\cdot\bfn$ is imposed weakly, either via a penalty parameter or via a Lagrange multiplier.

\subsection{The finite element method}
We choose conforming finite element subspaces $\Wk\subset \mathcal{H}(\Omega), \Vk\subset\bfH^{\dive}(\Omdh)$ with $\bfV_k=\RT_k$, and $\Qkk\subset L^2(\Omdh)$. These are the piecewise continuous polynomial functions of degree less than or equal to $k+1$, Raviart-Thomas elements, and piecewise discontinuous polynomial functions of degree less than or equal to $k$. 

Here we present two discretizations. The first, which we refer to as Method C1, uses penalty and the second, which we refer to as Method C2, uses a Lagrange multiplier in order to enforce Dirichlet boundary conditions on unfitted boundaries. See Section \ref{sec:mC_3D} for details on the discretization with penalty (Method C1) in 3D. 
Method C2 requires an extra variable which needs to be stabilized, but results in a symmetric formulation and in our numerical experiments the method performed better for Dirichlet boundary conditions. % per the magnitude of the condition number. 
%The method, which we shall call Method C, works well also in three space dimensions. See Appendix \ref{sec:mC_3D} for details on the discretization with penalty in 3D.}

\subsubsection{Method C1: enforcing the essential boundary condition with penalty}
Find $(\omega_h,\bfu_h,p_h)\in \Wk\times \Vk\times \Qkk^0$ such that 
\begin{alignat}{2} % [Primal stab]
	m(\omega_h,\phi_h) - c(\phi_h,\bfu_h) &= (\bfg\cdot\bft,\phi_h)_{\partial\Omega}, \ &&\forall \phi_h\in \Wk, \label{eq:mC_curleq} \\
	\check{a}(\bfu_h,\bfv_h) + c(\omega_h,\bfv_h) - \Bf(\bfv_h,p_h) &= (\bff,\bfv_h)_{\Omega} + (\lambda_{\bfu_n}h^{-1}\bfg\cdot\bfn,\bfv_h\cdot\bfn)_{\partial\Omega},\ \quad &&\forall \bfv_h\in\Vk^0, \label{eq:mC_momentumbalance} \\
	\Bf_0(\bfu_h,q_h) &= 0, \ &&\forall q_h\in \Qkk. \label{eq:mC_conservation} 
\end{alignat}
Here $\Bf$ and $\Bf_0$ are as in \eqref{eq:Bf} and \eqref{eq:Bf0}, and 
\begin{align}
	m(\omega_h,\phi_h) &:=(\mu^{-1}\omega,\phi)_{\Omega}  + s_m(\omega_h,\phi_h), \label{eq:m} \\
	s_m(\omega_h,\phi_h) &:=\sum_{F \in \mcF_h}\sum_{j=1}^{k}
        %\sum_{M \in \mcM_h} \sum_{F \in \mcFM}\sum_{j=1}^{k}
	\tau_m h^{2j+1}  ( \jump{\nDj \omega_h}, \jump{\nDj \phi_h} )_{F}, \label{eq:stab-w} \\
	c(\phi_h,\bfu_h) &:= (\crl\phi_h, \bfu_h)_{\Omega} + s_c(\phi_h,\bfu_h),\\
        s_c(\phi_h,\bfu_h) &:=\sum_{F \in \mcF_h}\sum_{j=0}^{k}
        %\sum_{M \in \mcM_h} \sum_{F \in \mcFM}\sum_{j=0}^{k}
        \tau_c h^{2j+1}  ( \jump{\nDj (\crl \phi_h)}, \jump{\nDj \bfu_h} )_{F}. \label{eq:stab-m} \\% [Mixed]
	\check{a}(\bfu,\bfv) &:=\sum_{T\in\mcG_h} (\lambda_{\bfu_n}h^{-1}\bfu\cdot\bfn,\bfv\cdot\bfn)_{T\cap \partial\Omega} +s_a(\bfu_h,\bfv_h), \label{eq:MCpenalty}
%	\check{\Af}(\bfu_h,\bfv_h) &:= \check{a}(\bfu_h,\bfv_h) + s_a(\bfu_h,\bfv_h), % [Primal]
\end{align}
where $\lambda_{\bfu_n}$ is a positive sufficiently large constant and $s_a$ is as in equation \eqref{eq:stab-u} but with $\gamma_a=1$. 
The stabilization terms are chosen so that we retain the structure of the original problem. The $s_b$ stabilization is always included hence $\tau_b$ is always a positive constant. The parameters $\tau_m$, $\tau_c$, and $\tau_a$ are either positive or zero. The choice of the three parameters gives different numerical methods which we label as $(\tau_m,\tau_c,\tau_a)$ for different values of $\tau_m,\tau_c,\tau_a\geq 0$, we shall focus on $(\tau_m,0,\tau_a)$ and $(0,\tau_c,0)$.

\subsubsection{Method C2:  enforcing the essential boundary condition with a Lagrange multiplier}
We now present a scheme where we prescribe the normal component $\bfu \cdot \bfn$  at the boundary using a Lagrange multiplier method. We introduce a new variable residing in the Lagrange multiplier space: 
\begin{align}
	% Q_{\tilde{k},h}^{\Gamma} := \bigoplus_{T\in\mcG_h^\Gamma} Q_{\tilde{k},h}(T).
	Q_{\tilde{k},h}^{\partial\Omega} := \bigoplus_{T\in\mcG_h} Q_{\tilde{k},h}(T).
\end{align}
% We also introduce the space
% \begin{align}
%   \Vkg & :=\left\{\bfv_h\in\Vk: \bfv_h\cdot\bfn|_{\partial \Omega \setminus \Gamma}=\bfg \cdot \bfn \right\}.
% \end{align}
We choose $\tilde{k}\geq k\geq0$ and formulate a variant of Method C where $\bfu \cdot \bfn$ on unfitted parts of the boundary are imposed via a Lagrange multiplier variable $\xi_h$ as follows: Find $(\omega_h,\bfu_h,p_h,\xi_h)\in \Wk\times \Vk\times \Qkk^0\times\Qkkg$ such that 
\begin{alignat}{2} % [Primal stab]
	m(\omega_h,\phi_h) - c(\phi_h,\bfu_h) &= (\bfg\cdot\bft,\phi_h)_{\partial\Omega}, \ &&\forall \phi_h\in \Wk, \label{eq:MClagmulteq1} \\
	c(\omega_h,\bfv_h) - \Bf_0(\bfv_h,p_h) + (\xi_h,\bfv_h\cdot\bfn)_{\partial\Omega} &= (\bff,\bfv_h)_{\Omega},\ \quad &&\forall \bfv_h\in\Vk, \\
	\Bf_0(\bfu_h,q_h) &= 0, \ &&\forall q_h\in \Qkk^0, \label{eq:MClagmult_divcond} \\
	(\bfu_h\cdot\bfn,\chi_h)_{\partial\Omega}-s(\xi_h,\chi_h) &= (\bfg\cdot\bfn,\chi_h)_{\partial\Omega}, \ &&\forall \chi_h\in \Qkkg. \label{eq:MClagmult}
\end{alignat}
Here $s(\cdot,\cdot)$ is the following stabilization term~\cite{BurHanLarMas17} 
\begin{align}\label{eq:stablag}
	s(\xi_h,\chi_h) := %\sum_{T\in\mcG_h} \tau_{\xi,1} (\nabla \xi_h \cdot\bfn_h,\nabla \chi_h\cdot\bfn_h)_T + 
	%	\sum_{F \in \mcF_h} \tau_{\xi,2} h  ( \jump{ \xi_h}, \jump{\chi_h} )_{F}
	\sum_{F \in \mcF_{h,\partial \Omega}} \sum_{j=0}^{\tilde{k}}
	\tau_{\xi} h^{2j}  ( \jump{\nDj \xi_h}, \jump{\nDj \chi_h} )_{F},
\end{align}
where $\mcF_{h,\partial\Omega}$ is the set of interior edges in $\mcG_h$, see Figure \ref{fig:static active mesh}, and $\tau_{\xi}$ is a positive constant. A Lagrange multiplier method is used in \cite{BurHanLarst23} to prescribe $\bfu|_{\partial \Omega}=\bfg$ with $\tilde{k}=0$. % and with $p_h \in Q_{0,h}$. 
We show by numerical experiments in Section \ref{sec:numex} that choosing $\tilde{k}>k$ may improve the pressure robustness of our discretization. We also refer to \cite{Burman2020Dirichlet} for the Lagrange multiplier method. For higher order elements than linear, i.e., $k>1$, other stabilization terms than the ghost penalty terms in \eqref{eq:stablag} need to be included, see~\cite{Olshans2017Trace, LarZah19}.

The method \eqref{eq:MClagmulteq1}-\eqref{eq:MClagmult} is an alternative formulation to  \eqref{eq:mC_curleq}-\eqref{eq:mC_conservation}. Note that in this version of Method C when $\bfu_h \cdot \bfn|_{\partial \Omega}=\bfg \cdot \bfn$ is imposed via a Lagrange multiplier method on $\partial\Omega$, we only have $\Bf_0$ (not $\Bf$), the space $\Vk^0$ is not needed,  and $p_h, q_h\in \Qkk^0$. In Section~\ref{sec:lagmult} we discuss why we need the space $\Vk^0$ when the Dirichlet boundary condition is imposed weakly via penalty but not when we use the Lagrange multiplier method.  The numerical examples in Section \ref{sec:numex} show that version \eqref{eq:MClagmulteq1}-\eqref{eq:MClagmult} of Method C performs better.

\begin{remark}[Mixed stabilization terms]
We note that the stabilization terms $s_b$ and $s_c$ that we use in the bilinear forms $B_0(\cdot,\cdot)$, $B(\cdot,\cdot)$, and $c(\cdot,\cdot)$ are not symmetric. In the design of these terms, we followed the discrete version of the exact sequence \eqref{eq:2Dcomplex}, making use of the property that $\dive (\Vk)\subset \Qkk$ and $\crl (W_{k+1,h})\subset \Vk$.
\end{remark}

\begin{remark}[Boundary conditions] \label{rem:otherbc}
The discrete problem with Dirichlet boundary conditions \eqref{eq:BCu} fails to fall into an appropriate de Rham complex. 
In \cite{Hanot2021}, this is suggested to impact the theoretical convergence order of the pressure variable. We observe this in our numerical experiments in Section \ref{sec:numex} only when the Dirichlet boundary condition is imposed via a penalty parameter as in the discretization \eqref{eq:mC_curleq}-\eqref{eq:mC_conservation} but not when we use a Lagrange multiplier as in \eqref{eq:MClagmulteq1}-\eqref{eq:MClagmult}. 
%However, when we choose appropriate stabilization terms, we do not notice this in our numerical experiments in Section \ref{sec:numex}.

According to \cite{Hanot2021}, the convergence order is provably optimal for fitted FEM with respect to either of the following boundary conditions 
\begin{align}
	&\omega\bft = \omega_0\bft,\ \bfu\cdot\bfn = u_0, \text{ or } \label{eq:altBC0}\\
	&\bfu\cdot\bft = u_0,\ p=p_0. \label{eq:altBC}
\end{align}
We show by numerical experiments in Section \ref{sec:numex} that for the boundary condition \eqref{eq:altBC} it is better to choose $(0,\tau_c,0)$. In this case we don't need to prescribe $\bfu \cdot \bfn$ and the most natural formulations reads:
%there is no $\check{a}$-term and the most 
%The weak form reads: 
%use the version of Method C labelled as $(0,\tau_m,0)$, which reads as follows.
Find $(\omega_h,\bfu_h,p_h)\in \Wk\times \Vk\times \Qkk$ such that 
\begin{alignat}{2}
	(\mu^{-1}\omega_h,\phi_h)_{\Omega}  - c(\phi_h,\bfu_h) &= (u_0,\phi_h)_{\partial\Omega}, \ &&\forall \phi_h\in \Wk, \label{eq:mC_variant10}\\
	c(\omega_h,\bfv_h) - \Bf_0(\bfv_h,p_h) &= (\bff,\bfv_h)_{\Omega} - (p_0,\bfv_h\cdot\bfn)_{\partial\Omega},\ \quad &&\forall \bfv_h\in\Vk, \\
	\Bf_0(\bfu_h,q_h) &= 0, \ &&\forall q_h\in \Qkk.\label{eq:mC_variant01}
\end{alignat}
\end{remark}

\section{The divergence-free property} \label{sec: the divfree prop}
We now show that all the cut finite element discretizations presented in Section \ref{sec:method_wang}-\ref{sec:method_vort} produce pointwise divergence-free approximations of solenoidal velocity fields, as fitted standard FEM does with the considered element pairs. We follow the proof in \cite{FraHaNilZa22}. Define the standard ghost penalty term for the pressure:
\begin{align}
  s_p(p_h,q_h) &:=\sum_{F \in \mcF_h}\sum_{j=0}^{\hat{k}}
  %\sum_{M \in \mcM_h} \sum_{F \in \mcFM}\sum_{j=0}^{\hat{k}}
	\tau_b h^{2j+1}  ( \jump{D^j p_{h}}, \jump{D^j q_{h}} )_{F}. \label{eq:stab-p}
\end{align}

%\ekn{
%\begin{remark}[Theorem 1 holds for Method C with penalty]\label{rem:methodC_divfree}
%	Since \eqref{eq:mC_conservation} is the same equation as \eqref{eq:mNC_conservation}, Theorem \ref{thm:divfree} holds for Method C with penalty as well.
%\end{remark} }

%\ekn{
%\begin{remark}[Theorem 1 holds for Method C with Lagrange multiplier]\label{rem:methodC_lag_divfree}
%	Again, since \eqref{eq:MClagmult_divcond} is the same equation as \eqref{eq:mNC_conservation}, Theorem \ref{thm:divfree} holds for Method C with Lagrange multiplier as well.
%\end{remark} }

% since equation  \eqref{eq:discreteStokesConservation} is the same.
\begin{thm} [\textbf{The divergence-free property}] \label{thm:divfree} 
  Assume $\bfu_h \in\Vk$ satisfies $\Bf_0(\bfu_h,q_h) = 0$, for all $q_h\in\Qk$ then $\dive\bfu_{h} =0$.
\end{thm}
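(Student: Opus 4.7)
The plan is to use a Galerkin-type argument with the test function $q_h=\dive\bfu_h$, exploiting the crucial mapping property $\dive(\Vk)\subseteq\Qk$ for both the RT and BDM pairs. This inclusion is exactly what makes $\dive\bfu_h$ an admissible test function in $\Qk$, so it lies at the heart of why $\bfH^{\dive}$-conforming pairs deliver pointwise divergence-free velocities.

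First I would verify that $\dive\bfu_h\in\Qk$. For $\bfV_k=\RT_k$ we have $\kp=k$ and $\dive(\RT_k(T))\subseteq Q_k(T)$, while for $\bfV_k=\BDM_k$ we have $\kp=k-1$ and $\dive(\BDM_k(T))\subseteq Q_{k-1}(T)$. In both cases $\dive\bfu_h$ is an elementwise polynomial of the correct degree, and hence belongs to $\Qk$. Taking $q_h=\dive\bfu_h$ in the hypothesis yields
\begin{equation*}
0 = \Bf_0(\bfu_h,\dive\bfu_h) = (\dive\bfu_h,\dive\bfu_h)_{\Omega} + s_b(\bfu_h,\dive\bfu_h).
\end{equation*}
Now the key observation is that $s_b(\bfu_h,\dive\bfu_h)$ is a sum of squared $L^2$-norms of jumps of $D^j(\dive\bfu_h)$ across edges in $\mcF_h$, hence non-negative. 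The first term $\|\dive\bfu_h\|_{\Omega}^2$ is also non-negative. Therefore both must vanish; in particular $\dive\bfu_h=0$ a.e.\ on $\Omega$.

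To upgrade this to $\dive\bfu_h=0$ on the whole active domain $\Omdh$, I would use the polynomial unique-continuation argument. On each $T\in\mesh$, $\dive\bfu_h|_T$ is a polynomial. By the definition of the active mesh, $|T\cap\Omega|>0$ for every $T\in\mesh$, so a polynomial that vanishes a.e.\ on $T\cap\Omega$ must vanish identically on $T$. Collecting over all elements of $\mesh$ gives $\dive\bfu_h\equiv 0$ on $\Omdh$.

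The only genuinely non-routine point is ensuring that the argument is robust on cut elements: this is exactly where one might worry, but the polynomial extension step handles it cleanly, and importantly the ghost penalty term $s_b$ is not needed to conclude divergence-freeness (its role is for stability and conditioning elsewhere). I do not foresee a serious obstacle, as the inclusion $\dive(\Vk)\subseteq\Qk$ and the positivity of both contributions do all the work.
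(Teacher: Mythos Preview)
Your proof is correct and takes a slightly different route from the paper's. Both arguments begin by testing with $q_h=\dive\bfu_h\in\Qk$ and obtaining $0=\|\dive\bfu_h\|_{\Omega}^2+s_b(\bfu_h,\dive\bfu_h)$. The paper then identifies $s_b(\bfu_h,\dive\bfu_h)=s_p(\dive\bfu_h,\dive\bfu_h)$ and invokes the ghost-penalty norm equivalence $\|q_h\|_{\Omega}^2+s_p(q_h,q_h)\gtrsim\|q_h\|_{\Omdh}^2$ (citing external lemmas from \cite{HaLaZa14,MaLaLoRo14}) to conclude directly on $\Omdh$. You instead observe that both summands are separately non-negative, deduce $\|\dive\bfu_h\|_{\Omega}^2=0$ on its own, and then extend from $\Omega$ to $\Omdh$ by polynomial unique continuation element-by-element, using that $|T\cap\Omega|>0$ for every $T\in\mesh$. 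Your approach is more elementary---it avoids the norm-equivalence lemma and makes transparent your closing remark that $s_b$ plays no essential role in the divergence-free conclusion (it merely needs to be non-negative when tested against $\dive\bfu_h$). The paper's route, by contrast, packages the cut-element extension into a standard CutFEM tool that is reused elsewhere in the stability analysis.
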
 
\begin{proof}
We have that $\bfu_h\in\Vk$ satisfies 
	\[0 = \Bf_0(\bfu_h,q_h) = \int_{\Omega} \dive\bfu_h q_h + \sum_{F \in \mcF_h}\sum_{j=0}^{\hat{k}}
	\tau_b h^{2j+1}  ( \jump{D^j (\dive \bfu_{h})}, \jump{D^j q_{h}} )_{F},  \quad \forall  q_h\in\Qk.  \]
	We may choose $q_{h}=\dive\bfu_{h}$ since $\dive \Vk \subset \Qk$. Then we have 
	\begin{equation}
		0=\|\dive\bfu_{h} \|_{\Omega}^2+ s_p(\dive\bfu_{h}, \dive\bfu_{h}) 
		%\sum_{j}\|[D^j\dive\bfu_{h,i}]\|_{\mcF_{h,i}}^2 
		\gtrsim \|\dive\bfu_{h} \|_{\Omdh}^2\geq 0,
	\end{equation}
	where we used that for any $q_{h} \in\Qk$, $\|q_{h} \|_{\Omega}^2 + s_p(q_{h}, q_{h}) \gtrsim \| q_{h} \|^2_{\Omdh}$. See Lemma 3.8 in \cite{HaLaZa14} or Lemma 5.1 in  \cite{MaLaLoRo14}. We can thus conclude that $\dive\bfu_{h} =0$ in $\Omdh$. 
\end{proof}

We see from this theorem that it is important to not disturb the condition $\Bf_0(\bfu_h,q_h) =0$. Next we show that prescribing the mean in the pressure test space will perturb the incompressibility condition when we impose boundary conditions by Nitsche's method or penalty, and motivate our choice of using the spaces $(\Vk^0, \Qk)$ as test spaces. 

\subsection{Justification and implementation of $\Vk^0$ }\label{sec:lagmult}
We know that the pressure is only determined up to a constant when Dirichlet boundary conditions are imposed.  Determining this unknown constant is usually handled by modifying the space $\Qk$ by setting the mean value:
\begin{equation}\label{eq:Qk0}
	\Qk^0=\left\{q_h \in \bigoplus_{T \in \mcT_{h}} Q_{\kp}(T): \ \int_{\Omega} q_h=0 \right\}.
\end{equation}
The space \eqref{eq:Qk0} is difficult to work with numerically (see \cite[Section 5.2.5]{BofBreFor13}), so equivalently one adds a Lagrange multiplier $\lagp\in\RR$ such that $\lagp \int_{\Omega} q_h$ is added to the weak formulation. %minimization problem of \eqref{eq:discreteStokes}-\eqref{eq:discreteStokesConservation}. 

The resulting discrete linear system for Method NC would be: Find solutions $(\bfu_h,p_h,\lagp)\in \Vk\times\Qk\times\RR$ to
\begin{alignat}{2} 
	\Af(\pmb{u}_h,\pmb{v}_h) - \Bf(\pmb{v}_h,p_h) &= \FF (\pmb{v}_h),  \ &&\text{ for all } \pmb{v}_h\in \Vk,  \label{eq:Astandard_lagr} \\
	\Bf_0(\pmb{u}_h,q_h) + (\lagp,q_h)_{\Omega} &= 0,  &&\text{ for all } q_h\in \Qk, \label{eq:Bstandard_lagr} \\
	(\sigma,p_h)_{\Omega} &= 0, &&\sigma\in\RR. \label{eq:standard_lagr}
\end{alignat}
%Consequently, the divergence condition changes and it is easy to see that the solution will satisfy
%\begin{equation}\label{eq:modded_divcond}
%	\dive\bfu_h = -\lagp.
%\end{equation}

%Let us investigate what happens to $\lagp$. 
%Recall that the magnitude of a Lagrange multiplier is a representation of how hard it is for the system to deal with the corresponding constraint. 
From \eqref{eq:Bstandard_lagr} we have for $\lagp \in \RR$ and $q_h=1$ that  
\begin{align}
	%\Bf_0(\pmb{u}_h,q_h) + (\lagp,q_h)_{\Omega} &= 0  \\
%\implies 
0 = \int_{\partial\Omega} \bfu_h\cdot\bfn + \int_{\Omega} \lagp &\implies |\Omega| \lagp = -\int_{\partial\Omega} \bfu_h \cdot\bfn, 
\end{align}
where we have used integration by parts. Imposing the boundary condition $\bfu_h\cdot\bfn |_{\partial\Omega} = \bfg\cdot\bfn$ strongly,
%regardless of whether one imposes the boundary condition $\bfu_h\cdot\bfn |_{\partial\Omega} = \bfg\cdot\bfn$ strongly or weakly, 
with $\int_{\partial\Omega} \bfg\cdot\bfn=0$ as a compatibility criterion on the datum $\bfg$, implies $\lagp = 0$ for a well-posed problem. By choosing $\chi_h=1$ in~\eqref{eq:MClagmult} we see that when $\bfu_h\cdot\bfn |_{\partial\Omega} = \bfg\cdot\bfn$ is imposed weakly via a Lagrange multiplier method the condition 
\begin{align}
	\int_{\partial\Omega} \bfu_h\cdot\bfn=\int_{\partial\Omega} \bfg\cdot\bfn=0
\end{align}
is satisfied.  However, prescribing the normal component $\bfu \cdot \bfn$ weakly via Nitsche's method or penalty yields $\int_{\partial\Omega} \bfu_h\cdot\bfn\approx\int_{\partial\Omega} \bfg\cdot\bfn$. It is easy to see that the approximate solution $\bfu_h$ will then satisfy
\begin{equation}\label{eq:modded_divcond}
	\dive\bfu_h = -\lagp.
\end{equation}
In the numerical experiments we see that $\lagp$ is typically not $0$ in this case and the divergence of the approximate velocity is incorrect by exactly the value of $\lagp$, see Section \ref{sec:num_lagr}. Thus, to remedy this issue we introduced the space $\Vk^0$.  

We avoid the Lagrange multiplier in \eqref{eq:Bstandard_lagr} by pairing $\int_{\Omega} p_h=0$ with a linear equation on the DOFs of the test space $\Vk$; namely $\int_{\partial\Omega} \bfv_h\cdot\bfn = 0$. This condition is enforced by a Lagrange multiplier in the momentum equation instead. In this way we avoid perturbing the incompressibility condition. We have already written our methods with this in mind by using $\Vk^0$ as test space, see equation \eqref{eq:mNC_discreteStokes}-\eqref{eq:mNC_conservation} for Method NC. We note that since the incompressibility condition holds pointwise, i.e., $\dive \bfu_h=0$, we have $\int_{\Omega} \dive \bfu_h=\int_{\partial\Omega} \bfu_h\cdot\bfn = 0$. Hence $\bfu_h \in \Vk^0$. 

\subsubsection{Implementation of the space $\Vk^0$ and $\Qk^0$}\label{sec:mixedL}
We now show how the resulting linear systems associated to Method NC and Method C become.
%looks including the Lagrange multipliers for imposing the condition for $p_h$ and on $v_h$. 
%in terms Lagrange multipliers the resulting system can be 
Method NC reads: Find $(\bfu_h,p_h,\lagp)\in \Vk\times\Qk\times\RR$ for which \begin{alignat}{2}\label{eq:mNC_lagr2}
	\Af(\pmb{u}_h,\pmb{v}_h) - \Bf(\pmb{v}_h,p_h) + (\lagp,\bfv_h\cdot\bfn)_{\partial\Omega} &= \FF (\pmb{v}_h), \ &&\text{ for all } \pmb{v}_h\in \Vk, \\
	\Bf_0(\pmb{u}_h,q_h) &= 0, &&\text{ for all } q_h\in \Qk, \nonumber \\
	(\sigma,p_h)_{\Omega} &= 0, &&\sigma\in\RR. \nonumber
\end{alignat}
Expanding trial and test functions in their finite element basis, let $\mathbf{\hat{f}}$ be the data vector corresponding to $\FF$ and let $\begin{bmatrix}
	\mathbf{\hat{u}} \\
	\mathbf{\hat{p}} 
\end{bmatrix}$ denote the unknown degree of freedom (DOF) values. The linear system of equations associated with \eqref{eq:mNC_lagr2} can be written as
\begin{equation}\label{eq:structure1}
	\begin{bmatrix}
		\mathbf{A} & -\mathbf{B}^T & (1,\bfv_h\cdot\bfn)_{\partial\Omega} \\
		\mathbf{B}_0 & \mathbf{0} & \mathbf{0} \\
		0 & (1,\sigma)_{\Omega} & 0
	\end{bmatrix} 
	\begin{bmatrix}
		\mathbf{\hat{u}} \\
		\mathbf{\hat{p}} \\
		\lagp
	\end{bmatrix} 
	=\begin{bmatrix}
		\mathbf{\hat{f}} \\
		\mathbf{0} \\
		0
	\end{bmatrix}.
\end{equation}

Method C reads: Find $(\omega_h,\bfu_h,p_h,\lagp)\in \Wk\times\Vk\times\Qkk\times\RR$ for which
\begin{alignat}{2}\label{eq:mC_lagr2}
	m(\omega_h,\phi_h) - c(\phi_h,\bfu_h) &= (\bfg\cdot\bft,\phi_h)_{\partial\Omega}, \ &&\forall \phi_h\in \Wk, \\
	\check{a}(\bfu_h,\bfv_h) + c(\omega_h,\bfv_h) - \Bf(\bfv_h,p_h) + (\lagp,\bfv_h\cdot\bfn)_{\partial\Omega} &= (\bff,\bfv_h)_{\Omega} + (\lagp_{\bfu}h^{-1}\bfg\cdot\bfn,\bfv_h\cdot\bfn)_{\partial\Omega},\ \quad &&\forall \bfv_h\in\Vk, \nonumber \\
	\Bf_0(\bfu_h,q_h) &= 0, \ &&\forall q_h\in \Qkk, \nonumber \\
	(\sigma,p_h)_{\Omega} &= 0,\ &&\quad\ \sigma\in\RR. \nonumber
\end{alignat}
The linear system of equations associated with \eqref{eq:mC_lagr2} can be written as
\begin{align}\label{eq:structure2}
	\begin{bmatrix}
		\mathbf{M} & -\mathbf{C}^T & \mathbf{0} & \mathbf{0} \\
		\mathbf{C} & \check{\mathbf{A}} & -\mathbf{B}^T & (1,\bfv_h\cdot\bfn)_{\partial\Omega} \\
		\mathbf{0} & \mathbf{B_0} & \mathbf{0} & \mathbf{0} \\
		0 & 0 & (1,\sigma)_{\Omega} & 0		
	\end{bmatrix}
	\begin{bmatrix}
		\hat{\omega} \\
		\mathbf{\hat{u}} \\
		\mathbf{\hat{p}} \\
		\lagp
	\end{bmatrix} 
	=\begin{bmatrix}
		\mathbf{\hat{g}} \\
		\mathbf{\hat{f}} \\
		\mathbf{0} \\
		0
	\end{bmatrix}.
\end{align}

Note that the divergence condition isn't disturbed in either method, so nothing changes in the proof of Theorem \ref{thm:divfree}.

\begin{remark} \textbf{(On lowest order discretizations)}
	Even though functions in $\Vk^0$ do not satisfy $\int_{\partial\Omega} p_h\bfv_h\cdot\bfn = 0$ for all $p_h\in\Qk^0$, our numerical studies indicate that the lowest order methods of \eqref{eq:mNC_lagr2} and \eqref{eq:mC_lagr2} perform well when modified such that $\Bf(\bfv_h,p_h) \mapsto \Bf_0(\bfv_h,p_h)$. This can be beneficial to do, making the system matrices more symmetric and sparse.
\end{remark}

\section{Efficient stabilization} \label{sec:equivalent_stab}
%More implementationally efficient and robust stabilization terms
To obtain stability and control of the condition number of the resulting linear systems independently of how the boundary $\partial \Omega$ is positioned relative to the background mesh we have utilized stabilization terms. To obtain less fill in and to have a method that is less sensitive to the stabilization constants one can add these ghost penalty terms only where they are necessary. This can be done by partitioning the unfitted mesh into macro-elements that have a large intersection with $\Omega$ and apply stabilization only at interior edges of these macro-elements and never on the interface between two macro-elements, see \cite{LaZa21}. For higher order elements than linear it is easier to implement the version of the ghost penalty terms proposed in \cite{Prauss} and hence avoid jump terms of derivatives. In our implementation of the proposed methods we have used these two modifications of the stabilization terms. 

\subsection{Macroelement stabilization}
To partition the mesh into macroelements that have a large intersection with $\Omega$ each element in the active mesh $\mcT_{h}$ is first classified either as having a large intersection with the domain $\Omega$ or a small intersection. We say that an element $T \in \mcT_{h}$ has a large $\Omega$-intersection if
\begin{equation}\label{eq:largeel}
	\delta  \leq \frac{|T \cap \Omega|}{|T|},
\end{equation}
where $0<\delta\leq 1$ is a constant which is chosen independent of the element and the mesh parameter. We collect all elements with a large intersection in 
\begin{equation}
	\mcTL=\left\{T \in \mcT_{h} : |T \cap \Omega| \geq \delta |T| \right\}.
\end{equation}
Using this classification we create a macroelement partition $\mcM_{h}$ of $\Omega_{\mcT_{h}}$:
% following \cite{LaZa21}:
\begin{itemize}
	\item To each $T_L\in\mcTL$ we associate a macroelement mesh $\mcT_{h}(T_L)$ containing $T_L$ and possibly adjacent elements that are in $\mcT_{h}\setminus \mcTL$, i.e., elements classified as having a small intersection with $\Omega$ and  are connected to $T_L$ via a bounded number of internal edges. 
	\item Each element $T \in \mcT_{h}$ belongs to precisely one macroelement mesh $\mcT_{h}(T_L)$.
	\item Each macroelement $M \in \mcM_{h}$ is the union of elements in $\mcT_{h}(T_L)$, i.e., 
	\begin{equation}
		M = \bigcup_{T \in \mcT_{h}(T_L)} T.
	\end{equation}
\end{itemize}
We denote by $\mcFM$ the set consisting of interior edges of $M\in\mcM_{h}$. Note that $\mcFM$ is empty when $T_L$ is the only element in $\mcTT$.
See Figure~\ref{fig:macro_mesh} for an illustration of a macroelement partitioning. Edges in $\mcFM$ are illustrated by dotted lines and black thick lines illustrate the boundary of macroelements consisting of more than one element.  We follow Algorithm 1 in \cite{LaZa21} when we construct the macroelement partition in our implementation. 
\begin{figure}[ht!]
	\centering
	\includegraphics[scale=0.2]{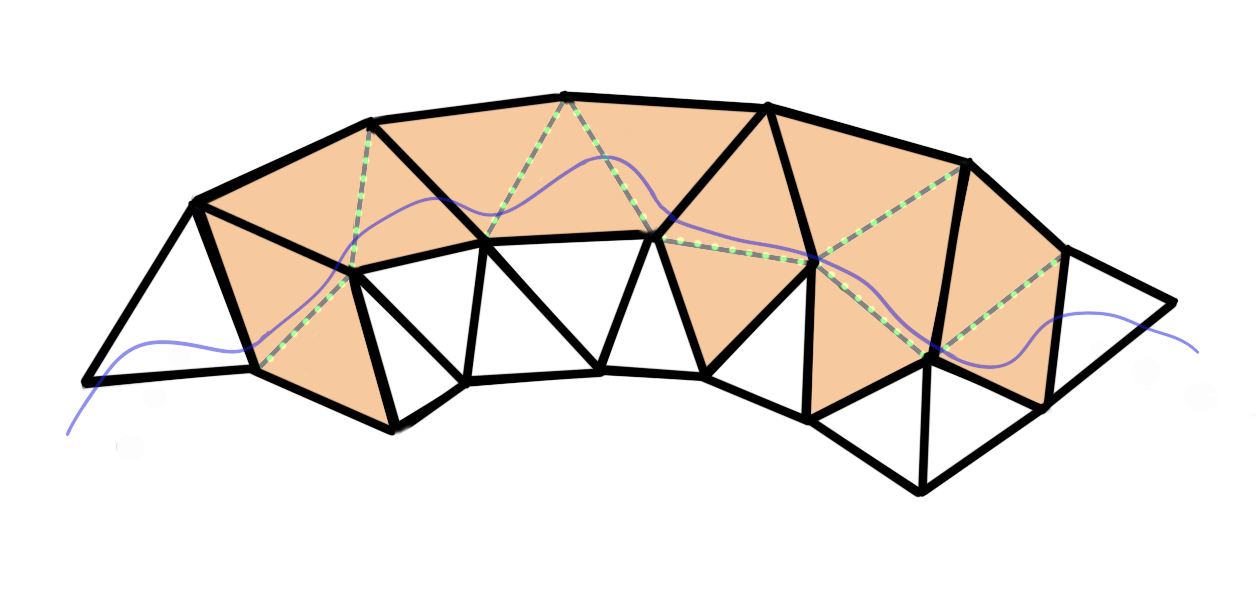}
	\caption{Illustration of a macroelement partition at the boundary $\partial\Omega.$ macroelements consisting of more than one triangle are colored orange, separated by thick edges. Given a macroelement $M\in \mcM_h$, edges $F\in \mcF_h(M)$ are illustrated by dotted lines.}  
	\label{fig:macro_mesh} 
\end{figure}

\subsection{Macro patch stabilization for high order elements}
For high order elements the framework suggested in~\cite{Prauss} for applying stabilization can be beneficial since it avoids the evaluation of kth-order derivatives of the basis functions but yields the same control as the standard ghost penalty terms including derivatives up to order $k$. We therefore apply our proposed mixed stabilization terms into that framework. %Let $\mcF_h^*$ be a given set of faces, which should be included in the stabilization process. 
For each face $F\in \mcF_h(M)$ we denote the patch consisting of the two elements sharing face F by $\mathcal{P}_F=T\cup T'$. 
The stabilization terms used in our implementation read:
\begin{align}
	s_a(\bfu_h,\bfv_h) &= \sum_{M \in \mcM_h} \sum_{F \in \mcFM}\sum_{j=0}^{k} \tau_a h^{1-\gamma_a}  ( [\nDj \bfu_h], [\nDj \bfv_h] )_{\mathcal{P}_F}, \label{eq:mstab-a}\\
	s_b(\bfu_h,q_h) &= \sum_{M \in \mcM_h} \sum_{F \in \mcFM}\sum_{j=0}^{k} \tau_b  ( [D^j (\dive \bfu_h)], [D^j q_h] )_{\mathcal{P}_F}, \label{eq:mstab-b}\\
	s_c(\tau_h,\bfu_h) &= \sum_{M \in \mcM_h} \sum_{F \in \mcFM}\sum_{j=0}^{k} \tau_c ( [\nDj (\crl \tau_h)], [\nDj \bfu_h] )_{\mathcal{P}_F},\label{eq:mstab-c}\\
	s_m(\omega_h,\tau_h) &= \sum_{M \in \mcM_h} \sum_{F \in \mcFM}\sum_{j=1}^{k} \tau_m ( [\nDj \omega_h], [\nDj \phi_h] )_{\mathcal{P}_F}, \label{eq:mstab-m} \\
	s_p(p_h,q_h) &= \sum_{M \in \mcM_h} \sum_{F \in \mcFM}\sum_{j=0}^{k} \tau_b ( [D^j p_h], [D^j q_h] )_{\mathcal{P}_F}. \label{eq:mstab-p}
\end{align}
Here, 
%the patch consisting of the two elements sharing face F is denoted by $\mathcal{P}_F=T\cup T'$ and 
\begin{equation}
 [\bfu_h]=\bfu_T^e(\bfx)-\bfu_{T'}^e(\bfx), \ \text{for $\bfx\in \mathcal{P}_F$},
\end{equation}
where $\bfu_T$ is the restriction of $\bfu_h$ to element $T$ and $\bfu_T^e$ is the canonical extension of the polynomial  $\bfu_T$ so that it can be evaluated for all $\bfx \in \mathcal{P}_F$.

\section{Numerical Experiments}\label{sec:numex}
In this section we study the performance of the proposed methods. For short-hand we will in this section write $P_{k+1}$ for the continuous piecewise polynomial space $\Wk$, $\RT_{k}$ for $\Vk$ with $\bfV_k(T)=\RT_{k}(T)$,  $\BDM_{k}$ for $\Vk$ with $\bfV_k(T)=\BDM_{k}(T)$, $Q_{k}$ for the piecewise polynomial space $\Qkk$, and $Q^{\Gamma}_k$ for the space $\Qkk^{\Gamma}$. 
We highlight that incompressibility \eqref{eq:conservation} is preserved with the proposed stabilization terms. We illustrate that if Dirichlet boundary conditions are imposed weakly via Nitsche's method or with a penalty parameter, the standard approach of prescribing $\int_{\Omega} q_h=0$ in the test space $\Qk$ via a Lagrange multiplier perturbs the incompressibility condition, see also Section \ref{sec:lagmult}. 
Furthermore, considering two examples introduced in \cite{VolkerDivConstr2017}, we show that the proposed methods can be made pressure robust at the cost of increasing the condition number of the linear systems.

We use the macro patch stabilization \eqref{eq:mstab-a}-\eqref{eq:mstab-p}, and choose the macro stabilization parameter $\delta=1$, meaning that all cut elements are marked as small but stabilization is applied only between interior edges inside macroelements. 
We choose $\tau_{\bullet}=1$ in all the stabilization terms. We have tried also smaller constants than one. Typically, the error decreases (down to some limit) when decreasing $\tau$, while the condition number increases. Our choice is not optimal and no attempts have been done to tune these parameters. One can obtain smaller errors or better condition numbers in some of the examples by tuning these stabilization parameters. However, we do believe that the main conclusions will not change. 

We study the following methods.
\begin{itemize}
	\item Method NC: We use the pairs $\BDM_1\times Q_0$ and $\RT_1\times Q_1$, each respective pair unstabilized ($\tau_{\bullet}=0$) and stabilized. We chooce $\lambda_t=1$.  The method is not sensitive to the choice of the interior penalty parameter $\lambda_t$ since the bilinear form $a(\bfu_h,\bfv_h)$ is chosen to be anti-symmetric, see \eqref{eq:a}. We specify in each example how large we have chosen $\lambda_{\bfu}>0$. In the stabilization $s_a$ we take $\gamma_a=-1$.
	\begin{itemize} 
		\item \textit{Proposed} method: We use the stabilization terms $s_b(\cdot,\cdot)$, see~\eqref{eq:mstab-b}, and follow the formulation~\eqref{eq:mNC_discreteStokes}. We implement the space $\Vk^0$ for the velocity test space following formulation \eqref{eq:mNC_lagr2}.
		\item \textit{Standard} approach: We use the stabilization term $s_p(\cdot,\cdot)$, see \eqref{eq:mstab-p}, and $\Qk^0$ for the pressure test space, resulting in formulation \eqref{eq:standard_mNC}.%-\eqref{eq:standard_lagr}.
	\end{itemize}
	\item Method C1: We use the triples $P_1\times\RT_0\times Q_0$ and $P_2\times\RT_1\times Q_1$ for the approximation of (vorticity, velocity, pressure), unstabilized and stabilized. For each example $\lambda_{u_n}>0$ has to be chosen large enough. In the stabilization $s_a$ we take $\gamma_a=1$.
	\begin{itemize}   
        \item \textit{Proposed} method: We use the stabilization terms $s_b(\cdot,\cdot)$ and follow formulation \eqref{eq:mC_momentumbalance}. We implement the space $\Vk^0$ for the velocity test space following formulation~\eqref{eq:mC_lagr2}.
          %the velocity test space $\Vk^0$, resulting in formulation \ekn{\eqref{eq:mC_momentumbalance}, or equivalently \eqref{eq:mC_lagr2}.}
		\item \textit{Standard} approach: We use the stabilization term $s_p(\cdot,\cdot)$, and the pressure test space $\Qk^0$. This yields formulation \eqref{eq:standard_mC}.  
	\end{itemize}
	\item Method C2: We mostly use the quadruple $P_1\times\RT_0\times Q_0\times Q_0^{\Gamma}$ for the approximation of (vorticity, velocity, pressure, boundary pressure), unstabilized ($\tau_\bullet = 0$) and stabilized. Results using the quadruple $P_2\times\RT_1\times Q_1\times Q_1^{\Gamma}$ are included in Table \ref{tab:fourfield2}. Here we always use $\Qk^0$ for our trial and test spaces.
	\begin{itemize}
		\item \textit{Proposed} method: We use the stabilization terms $s_b(\cdot,\cdot)$. See formulation \eqref{eq:MClagmult}.
		\item \textit{Standard} approach: We use the stabilization term $s_p(\cdot,\cdot)$. See formulation \eqref{eq:standard_MClagmult}.
	\end{itemize}
\end{itemize}

We compute the condition number of the system matrix using MATLAB's \textit{condest} function, which computes a lower bound for the $1-$norm condition number. All linear systems are solved using a direct solver, either UMFPACK or MUMPS, and we state when we use which. The plots are organized so that red color means no stabilization is applied, while blue means stabilization is applied.

Note that in general we do not know the exact boundary $\partial\Omega$ but rather an approximation $(\partial\Omega)_h$. In the implementation of all the schemes, we use a piecewise linear approximation of $\partial\Omega$. Consequently, we have an approximation $\Omega_{h}$ of $\Omega$. 

The code we used for the computations is available at \cite{repo}.

\subsection{Example 1 - Homogeneous Dirichlet BC}
Consider the example from \cite{LiuNeiOls23}. The domain $\Omega$ is  the disk of radius $0.5$ centered at $(0.5,0.5)$, and the exact solution to the Stokes problem is 
%$\sqrt{0.25}=0.5$
\begin{equation}
	\bfu(x,y) = \begin{pmatrix}
		2((x-0.5)^2+(y-0.5)^2-0.25)(2y-1)\\
		-2((x-0.5)^2+(y-0.5)^2-0.25)(2x-1)
	\end{pmatrix},\ p(x,y) = 10(x^2-y^2)^2.
\end{equation}
We embed $\Omega\subset\Omega_0:=[0,1]^2$ and use a uniform background mesh. Hence the entire boundary is unfitted, meaning $\Gamma=\partial\Omega$.
% We study this problem also in subsections \ref{sec:num_lagr}, \ref{sec:num_variants}, and \ref{sec:num_MethodC_BC}, but from different angles. The parameters of the methods in these subsections are the same as described in the following paragraph, unless otherwise stated.

%For the different divergence errors induced by the standard methods we refer to Figure \ref{fig:ex1_diverror}. 
The convergence rates and errors are comparable between the proposed methods and the standard approaches, so we focus on the proposed methods in Section \ref{sec:convcondex1}. We study the divergence error in Section \ref{sec:num_lagr}.

\subsubsection{Convergence and condition number scaling}\label{sec:convcondex1}
We show the $L^2$-error in the approximate pressure and the velocity for different mesh sizes in Figure \ref{fig:ex1_up}. The condition number of the linear systems are shown in Figure \ref{fig:ex1_cond}. %associated to Method C1 and Method NC when we impose the Dirichlet boundary conditions weakly with Nitsche's method. 
% So we compare,
% \begin{itemize}
% 	\item Method NC: We use the pairs $\BDM_1\times Q_0$ and $\RT_1\times Q_1$, each respective pair unstabilized and stabilized with $\lambda_{\bfu}=4000$.
% 	\item Method C1: We use the triple $P_2\times\RT_1\times Q_1$, unstabilized and stabilized with $\lambda_{u_n}=4000$.
% \end{itemize}
We choose $\lambda_{\bfu}=4000$ for Method NC and $\lambda_{u_n}=4000$ for Method C1. These choices are not optimal and there is a freedom to choose each parameter differently, but the penalty parameters $\lambda_{\bfu}$ and $\lambda_{u_n}$ have to be chosen sufficiently large.%, both are taken to be $4000$. 

%The results of Method NC are not sensitive to the choice of the interior penalty parameter $\lambda_t$ since the bilinear form $a(\bfu_h,\bfv_h)$ is chosen to be anti-symmetric see \eqref{eq:a}. 
% We illustrate this by showing results for both $\lambda_t=0.01$ and $\lambda_t=1$ in Table \ref{tab:ex1_altlambdat}.
% \begin{table}[h]
% 	\centering
% 	\resizebox{\textwidth}{!}{
% 	\begin{tabular}{|l|l|l|l|l|l|l|l|}
% 	\hline
% 	h      & Pressure error, $\lambda_t=1$ & Pressure error, $\lambda_t=0.01$ & Velocity error, $\lambda_t=1$ & Velocity error, $\lambda_t=0.01$ & Condition number, $\lambda_t=1$ & Condition number, $\lambda_t=0.01$  \\ \hline
% 	0.1    & 0.405886           & 0.323592                  & 0.0116236          & 0.0123115       &  3.519214e+08 & 3.490001e+08
% 	\\
% 	0.05   & 0.19364            & 0.209845             & 0.00290931         & 0.00283478     &  7.810779e+08 & 7.735734e+08
% 	\\
% 	0.025  & 0.0909131          & 0.0961034            & 0.000705751        & 0.000570626    &   2.041582e+09 & 2.350889e+09
% 	\\
% 	0.0125 & 0.0435873          & 0.0456957            & 0.000170689        & 0.000135499   & 9.164795e+09
%  & 8.036609e+09
%  \\ \hline
% 	\end{tabular} }
% 	\caption{Example 1: Method NC ($\BDM_1\times Q_0$), error in the approximate pressure and in the approximate velocity measured in the $L^2$-norm and the condition number of the resulting linear system for $\lambda_t=1$ and $\lambda_t=0.01$. 
% 		\label{tab:ex1_altlambdat}}
% \end{table} 

\begin{figure}[ht]
	\centering 
	\begin{subfigure}{0.4\textwidth}
		\includegraphics[width=\textwidth]{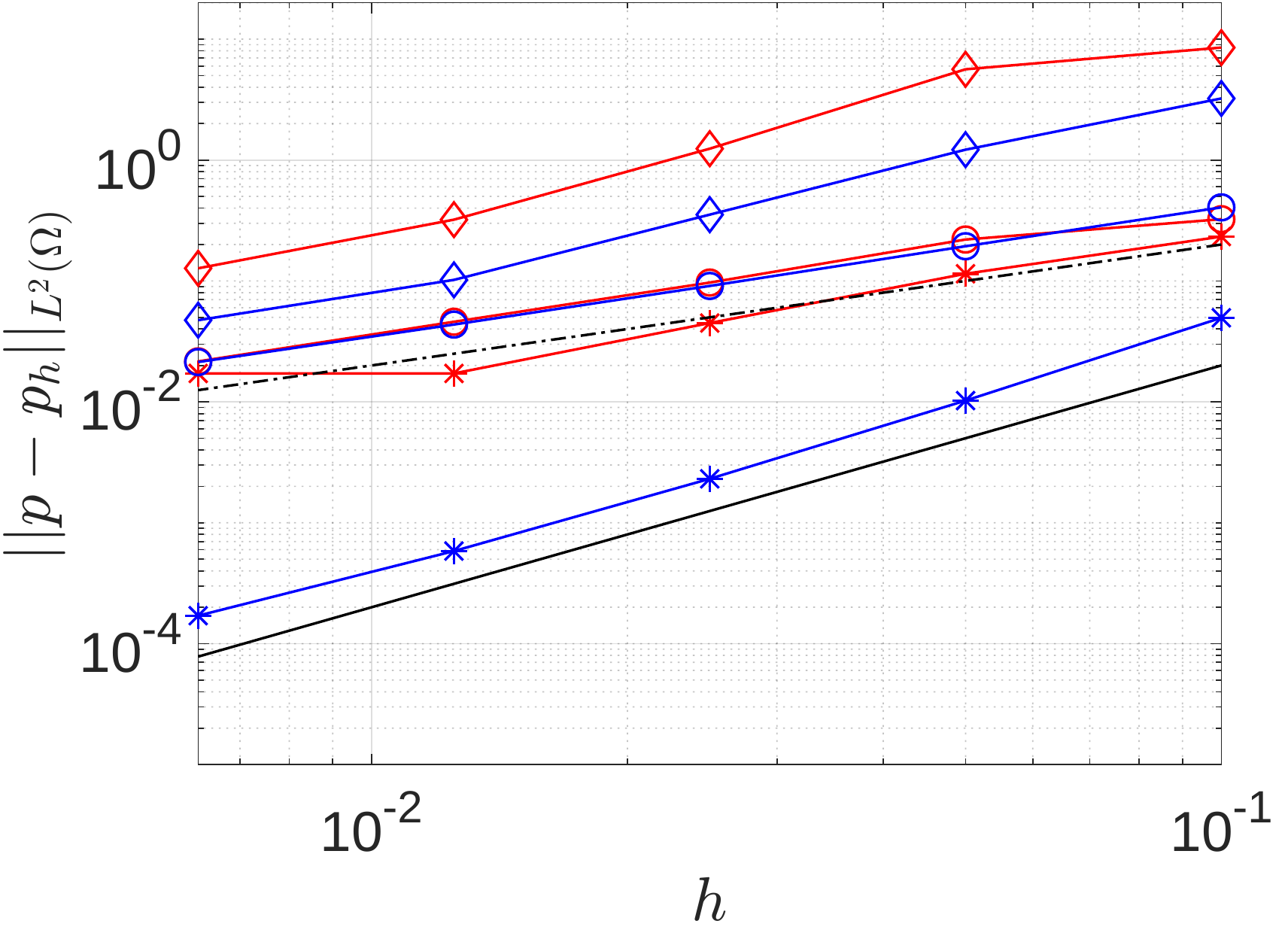}%1
	\end{subfigure}
	\quad
	\begin{subfigure}{0.4\textwidth}
		\includegraphics[width=\textwidth]{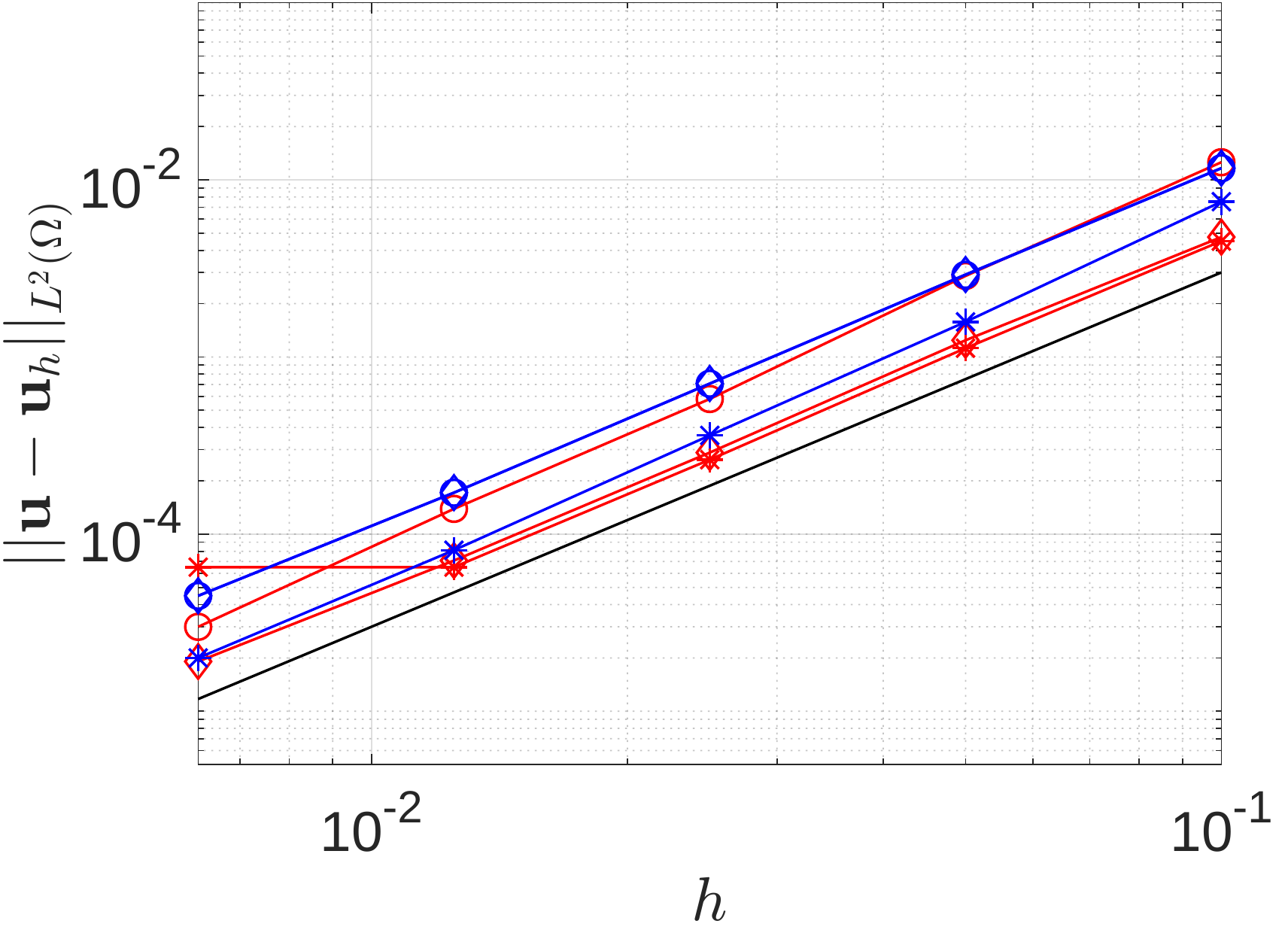}%2
	\end{subfigure} \\
	\includegraphics[scale=0.8]{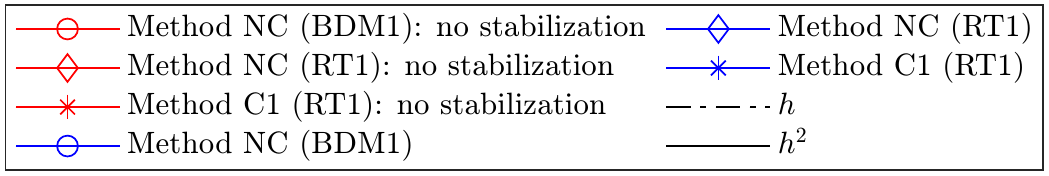}
	\caption{ 
		Example 1: Method NC ($\BDM_1\times Q_0,\ \RT_1\times Q_1$) vs. Method C1 ($P_2\times\RT_1\times Q_1$).
		Left: The $L^2$-error of the pressure versus mesh size h. Right:  The $L^2$-error of the velocity field versus mesh size $h$. (The result on the finest mesh for the unstabilized version of Method C1, represented by red stars, is faulty. The condition number is greater than $10^{20}$ and the linear solver fails.)
		\label{fig:ex1_up} }   
\end{figure}
We see that both stabilized methods have second order convergence for the velocity, which is optimal. In the left panel of Figure \ref{fig:ex1_up} we see that Method NC with the pair $\RT_1\times Q_1$ suffers from the expected suboptimal first order convergence of the pressure (although the convergence order seems to be slightly better than first  order), while with the pair $\BDM_1\times Q_0$ the convergence order is optimal also for the pressure (first order).  For Method C1 with $P_2\times\RT_1\times Q_1$ we obtain optimal second order convergence for the pressure when stabilization is applied. For the unstabilized version of Method C1, represented by red stars in Figure \ref{fig:ex1_up}, the condition number is greater than $10^{20}$ on the finest mesh and the linear solver fails.
%We also observe that the stabilized methods (illustrated in blue) always perform better than their corresponding unstabilized version (illustrated in red).
\begin{figure}[ht]
	\centering
	\begin{subfigure}{0.4\textwidth}
		\includegraphics[width=\textwidth]{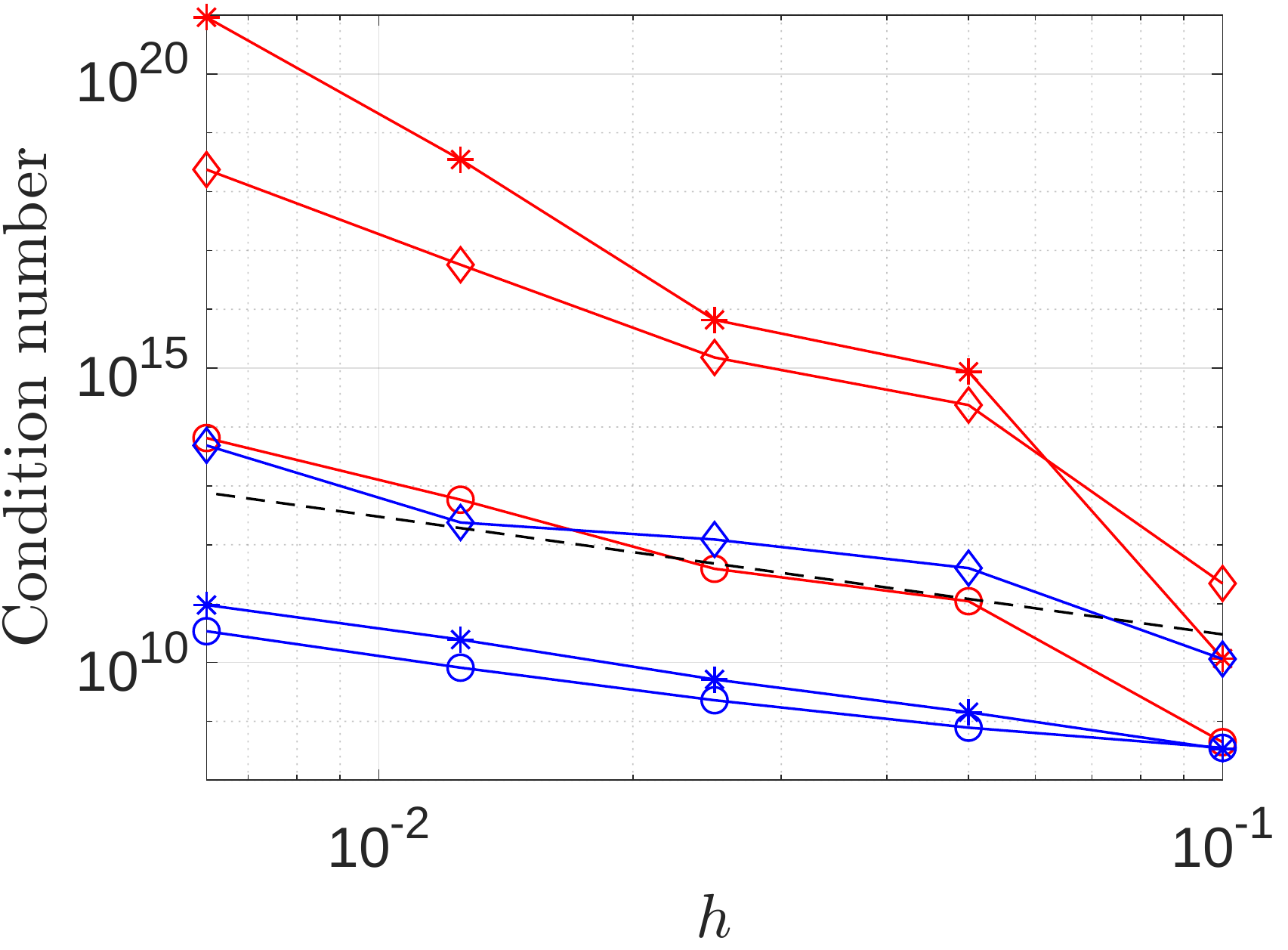}%3
	\end{subfigure} 
	\quad
	\begin{subfigure}{0.4\textwidth}
		\includegraphics[width=\textwidth]{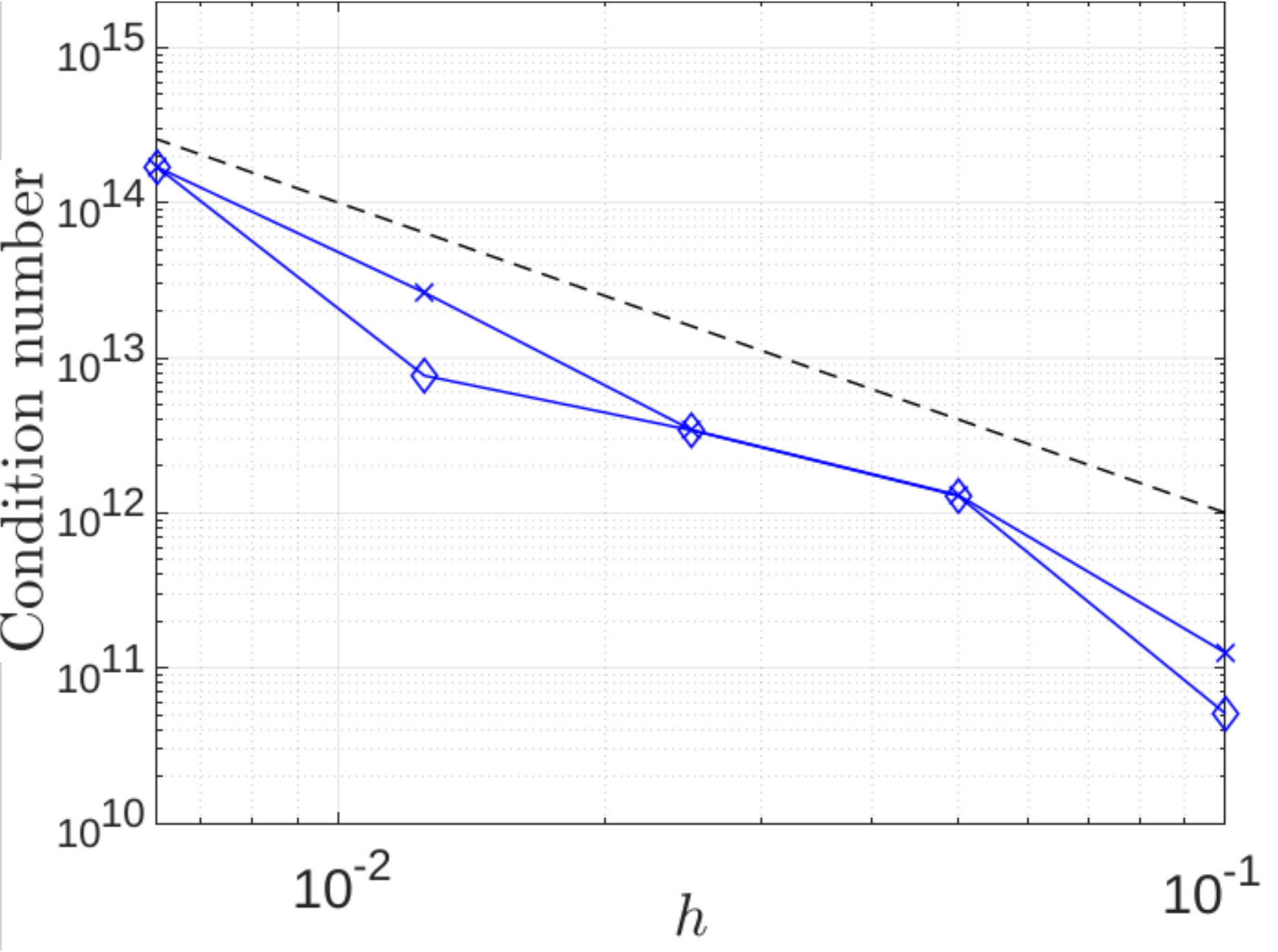}%6
	\end{subfigure} \\
	\includegraphics[scale=0.8]{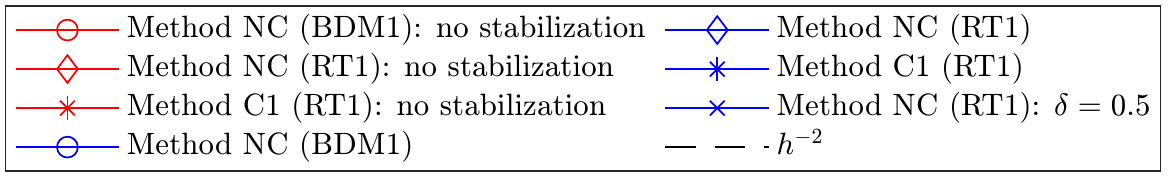}
	\caption{ 
		Example 1: Method NC ($\BDM_1\times Q_0,\ \RT_1\times Q_1$) vs. Method C1 ($P_2\times\RT_1\times Q_1$).
		Left: The $1$-norm condition number versus mesh size h. Right: The $1$-norm condition number versus mesh size h, comparing $\delta=0.5$ with $\delta=1$ for Method NC.
		\label{fig:ex1_cond} }                     
\end{figure}

In Figure \ref{fig:ex1_cond} we see that the condition number of the stabilized methods scale optimally as $O(h^{-2})$. For the $\RT_1$-elements we see that stabilization is really necessary to control the condition number.
In the right panel of Figure \ref{fig:ex1_cond} the condition number of Method NC is shown for the pair $\RT_1\times Q_1$ with both $\delta=0.5$ and $\delta=1$. We observe an $O(h^{-2})$ scaling. 
\subsubsection{The divergence error}\label{sec:num_lagr} %&
The proposed methods preserve the incompressibility condition to machine precision. The $L^2$- and $L^{\infty}$-error of the divergence for different mesh sizes $h$ is shown in Table \ref{tab:ex1_div_L2} and Table \ref{tab:ex1_div_Linf}, respectively. 

In this section we study how different approaches of prescribing the constant in the space $\Qk$ %in case of Dirichlet boundary conditions 
influence the numerical results when boundary conditions are imposed weakly. We also study the influence of the stabilization terms on the divergence error for Method NC. (The results are comparable for Methods C1 and C2.)  
The convergence order of the methods and the scaling of the condition number with respect to mesh size are not influenced by these aspects, so we focus on the divergence error. 

%The standard methods described at the start of Section \ref{sec:numex} consist of two modifications to the proposed methods. 
In the proposed methods we have modified the standard approach in two ways, changing the test spaces and the stabilization, see Section \ref{sec:lagmult} and Section~\ref{sec:standardm}. 
In order to isolate the effects we study these aspects separately:
\begin{itemize}
	\item Modification 1: We use $\Qk^0$ for the pressure test space (as in the standard approach) but we use the proposed stabilization $s_b$.
	\item Modification 2: We stabilize using $s_p$ (as in the standard approach) but we use the proposed spaces, i.e. the pressure test space $\Qk$ and the velocity test space $\Vk^0$.
\end{itemize}

We consider Modification 1 and compare Method NC ($\BDM_1\times Q_0$) with $\lambda_{\bfu}=800$ and Method C1 ($P_1\times\RT_0\times Q_0$) with $\lambda_{u_n}=800$. In Table \ref{tab:ex1_div_Linf_lagr} we show the $L^{\infty}$-error of the divergence for different mesh sizes $h$. We clearly see that the proposed methods result in discrete velocity fields where the error in the divergence is of the order of machine epsilon. However, the divergence-free property of the $\bfH^{\dive}$-conforming finite element spaces is not preserved when the proposed spaces are not used. We see in the left panel of Figure \ref{fig:ex1_diverror} that the divergence is incorrect by exactly the value of the Lagrange multiplier with which the mean value of the functions in the pressure test space are prescribed, see Section~\ref{sec:lagmult}. The shown result is for Method NC but the same holds for Method C1 with Modification 1. 
\begin{figure}[ht]  	
	\centering	
	\begin{subfigure}{0.46\textwidth}
	\includegraphics[width=\textwidth]{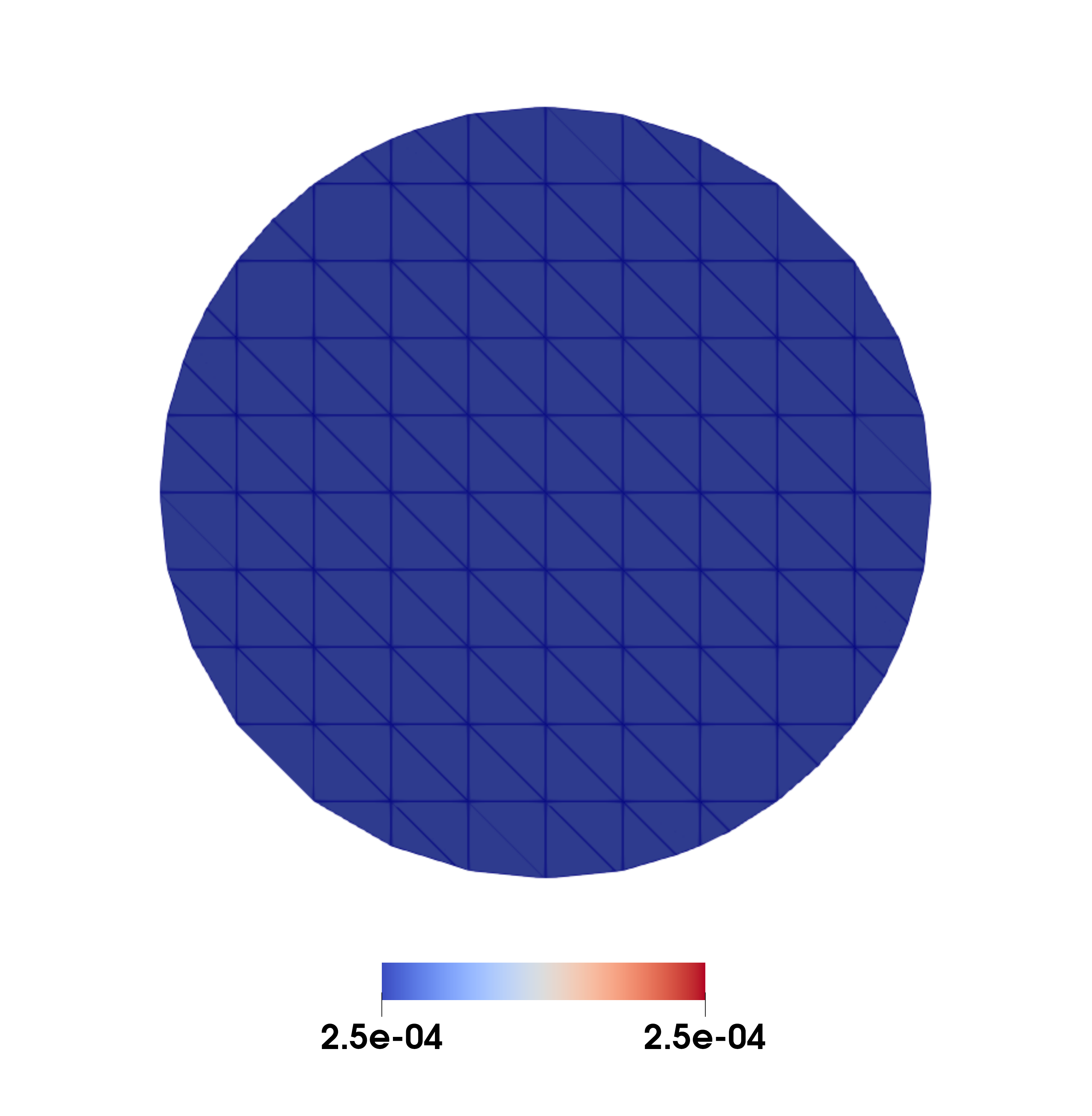}
	\end{subfigure} 
	\quad\quad\quad
	\begin{subfigure}{0.46\textwidth}
		\includegraphics[width=\textwidth]{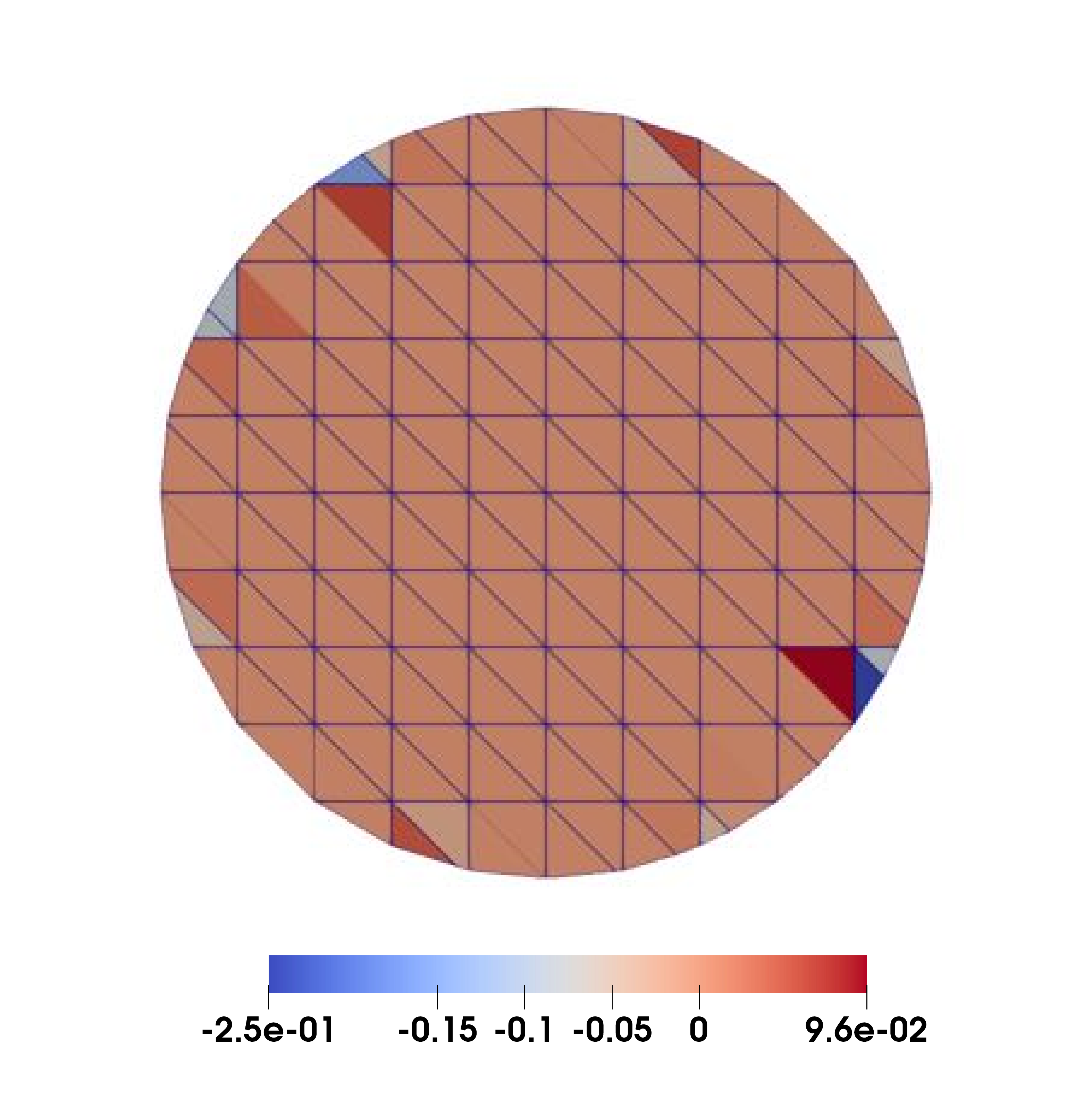}%6
	\end{subfigure} 
	\caption{ 
		Example 1: Heatmaps of the divergence error using Method NC ($\BDM_1\times Q_0$) for $h=0.05$. Left: The error in the divergence with Modification 1. The error is constant and equal to $2.5e{-04}$, the value of the Lagrange multiplier. Right: The error in the divergence with Modification 2. The divergence-free property of the $\bfH^{\dive}$-conforming elements we use are destroyed by the standard ghost penalty stabilization $s_p$. %-stabilization on macro-element edges.
		\label{fig:ex1_diverror} }            
\end{figure}

\begin{table}[ht]
	\resizebox{\textwidth}{!}{
		\begin{tabular}{|l|l|l|l|l|l|l|}
			\hline
			$h$    & %\begin{tabular}[c]{@{}l@{}}Method NC (BDM1):\\ no stabilization \end{tabular} &
			 \begin{tabular}[c]{@{}l@{}}Method NC (BDM1):\\ Modification 1\end{tabular} & Method NC (BDM1) & 
			  \begin{tabular}[c]{@{}l@{}}Method C1 (RT0):\\ Modification 1\end{tabular} & Method C1 (RT0) & Method C2 (RT0) \\ \hline
			0.1     & 0.00112924  & 1.55431E-15 & 0.00100837  & 2.66454E-15    &    1.15463e-14       \\
			0.05    & 0.000643155 & 5.77316E-15 & 0.000632555 & 7.99361E-15    &    1.12133e-14       \\
			0.025   & 0.000341402 & 1.5099E-14  & 0.000338262 & 1.55431E-14    &    1.06581e-14       \\
			0.0125  & 0.000176349 & 2.84217E-14 & 0.000177648 & 2.84217E-14    &    3.19744e-14       \\ \hline
	\end{tabular}}
	\caption{Example 1: Max-norm error $\|\dive \bfu- \dive \bfu_h\|_{L^\infty(\Omega)}$ for different mesh sizes $h$, $\lambda_{\bfu}=\lambda_{u_n}=800$. Comparing the proposed methods with the standard way of prescribing $\int_{\Omega} q_h=0$ in the pressure test space $\Qk$ using a Lagrange multiplier. We also compare with the lowest order Method C2.
		\label{tab:ex1_div_Linf_lagr}}
\end{table}
Consider now Method C2. Here we impose the normal component of the Dirichlet boundary condition by introducing a new variable residing in the Lagrange multiplier space $\Qkk^{\Gamma}$, see Section~\ref{sec:method_vort}. Then we can impose the mean value of functions $q_h$ in the test space $\Qk$ without destroying the divergence-free property of the $\bfH^{\dive}$-conforming finite element spaces, see Table~\ref{tab:ex1_div_Linf_lagr}. The divergence error is of order machine epsilon in $L^\infty$-norm.  For Method C2 with $P_1\times \RT_0\times Q_0\times Q_0^{\Gamma}$ and $P_2\times \RT_1\times Q_1\times Q_1^{\Gamma}$ we report the error in the pressure, the velocity, the divergence, as well as the condition number for different mesh sizes in Tables \ref{tab:fourfield1}-\ref{tab:fourfield2}.  Our numerical studies show that Method C1 and Method C2 perform similarly for the $\RT_1$ elements; compare Figure \ref{fig:ex1_up} with Table \ref{tab:fourfield2}. However, for the lowest order discretization (using $\RT_0$ for the velocity) the pressure does not converge optimally for Method C1, see Figure \ref{fig:ex12_upc} and Table \ref{tab:fourfield1}.

The choice of stabilization for the pressure is another important aspect which influences the divergence error in both methods.  
Consider for instance Method NC with Modification 2. Exchanging the stabilization term $s_b(\cdot,\cdot)$ acting on $\Vk\times\Qk$ with the pressure stabilization term $s_p(\cdot,\cdot)$ acting on $\Qk\times\Qk$, which has been used in most previous work, ruins the divergence-free property of the underlying elements. See the right panel of Figure \ref{fig:ex1_diverror} for heatmap of the error; the error in the divergence is large in the vicinity of edges where stabilization is applied. 
A detailed comparison in the case of the Darcy equations was given in \cite{FraHaNilZa22}. However, the $L^{\infty}$-errors we observe here for the Stokes problem are actually worse. 

%\subsubsection{Method C and different stabilization terms: $(\tau_m,0,\tau_a)$ vs. $(0,\tau_c,0)$}\label{sec:num_variants}
\subsubsection{Method C for different boundary conditions} \label{sec:num_MethodC_BC}
We make a comparison between the present variant of Method C1, i.e., $(\tau_m,0,\tau_a)$, % and $\lambda_{u_n} = 800$
with the variant $(0,\tau_c,0),$ see see \eqref{eq:mC_variant10}-\eqref{eq:mC_variant01}. We compare these different choices of stabilization terms both for Dirichlet boundary conditions \eqref{eq:BCu}, and for the alternative boundary conditions \eqref{eq:altBC}.
For Dirichlet boundary conditions we also compare with Method C2 \eqref{eq:MClagmult}, and for all methods we limit ourselves to the lowest order discretization. (Note that Method C2 is designed specifically for Dirichlet boundary conditions.) 
Namely, we study
\begin{itemize}
	\item Dirichlet boundary conditions \eqref{eq:BCu}: $\bfu=\mathbf{0}$
	\begin{itemize}
		\item Method C2 ($P_1\times\RT_0\times Q_0\times Q_0^{\Gamma}$ ) 
		\item Method C1 ($P_1\times\RT_0\times Q_0$): variant $(\tau_m,0,\tau_a)$ with $\lambda_{u_n} = 400$
		\item Method C1 ($P_1\times\RT_0\times Q_0$): variant $(0,\tau_c,0)$ with $\lambda_{u_n} = 400$
	\end{itemize}
	\item Alternative boundary conditions \eqref{eq:altBC}: $\bfu\cdot\bft=0$ and $p=10(x^2-y^2)^2$
	\begin{itemize}
		\item Method C1: variant $(\tau_m,0,\tau_a)$
		\item Method C1: variant $(0,\tau_c,0)$%, see \eqref{eq:mC_variant10}-\eqref{eq:mC_variant01}
	\end{itemize}
\end{itemize}
%For Dirichlet boundary conditions \eqref{eq:BCu}, the parameter $\lambda_{u_n}$ has to be chosen sufficiently large. This value varies depending on the variant of the method, but here we have chosen the same value $\lambda_{u_n}=400$. 

%This results in two versions of the unstabilized method, one with $\lambda_{u_n} = 40$ and one with $\lambda_{u_n} = 800$. 
We can start by observing that it is really necessary to stabilize Method C2; if this is not done we can see in Figure \ref{fig:ex12_upc} that the pressure error does not converge, and the condition number is very large at $\approx 10^{40}$.
In the middle panel of Figure \ref{fig:ex12_upc} we see not much difference between the two versions $(\tau_m,0,\tau_a)$ and $(0,\tau_c,0)$ in terms of their velocity convergence. The pressure convergence is best for Method C2, being the only one to achieve and convergence order of $1$. The other two methods seem to tend to a convergence order of $0.5$.
In the right panel we see the condition number as a function of mesh size $h$. The condition number is controlled with all variants but the magnitude is lowest for Method C2.
\begin{figure}[ht]
	\centering 
	\begin{subfigure}{0.33\textwidth}
		\includegraphics[width=\textwidth]{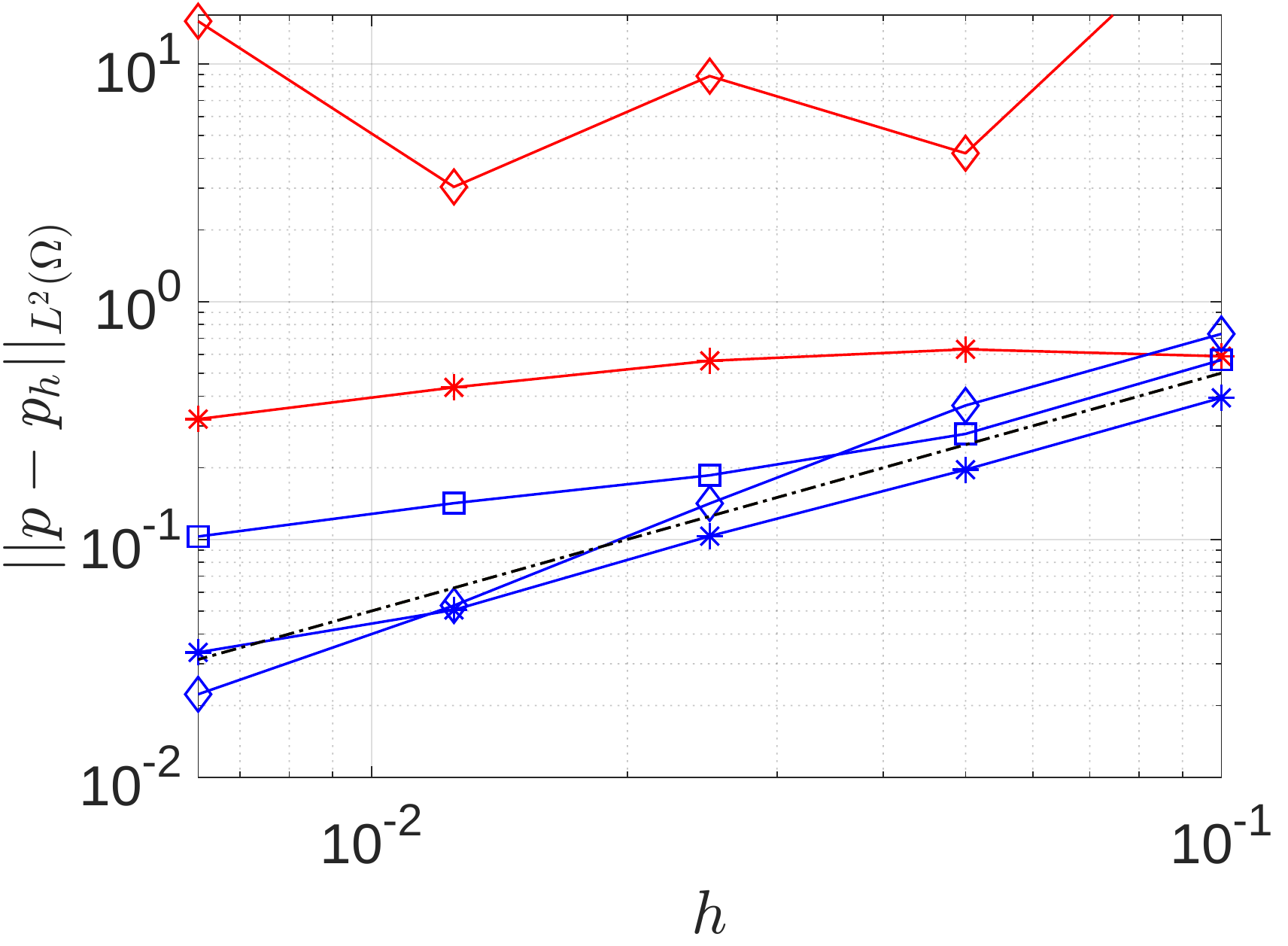}%1
	\end{subfigure}
	\hfill
	\begin{subfigure}{0.33\textwidth}
		\includegraphics[width=\textwidth]{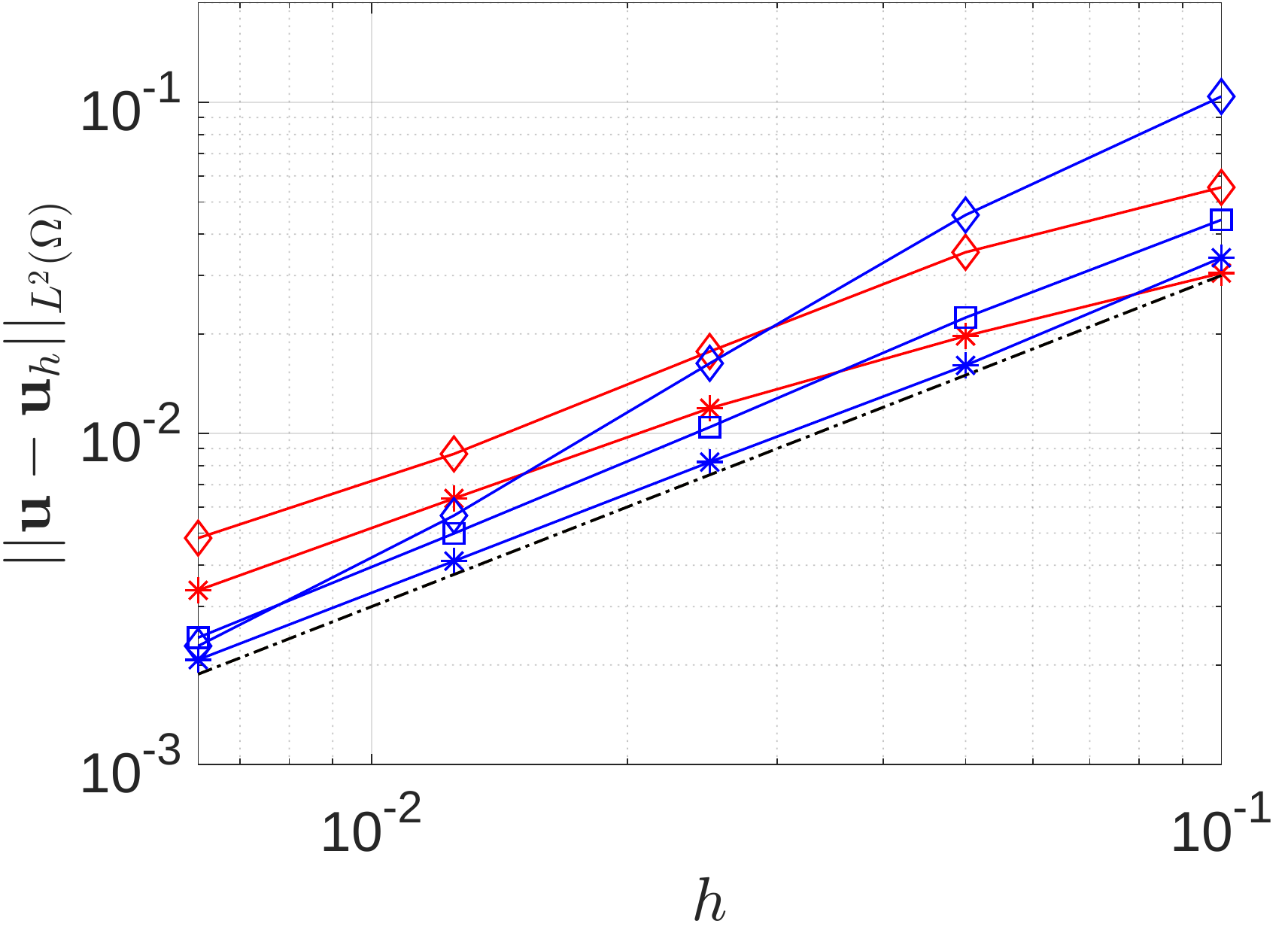}%2
	\end{subfigure}
	\hfill
	\begin{subfigure}{0.33\textwidth}
		\includegraphics[width=\textwidth]{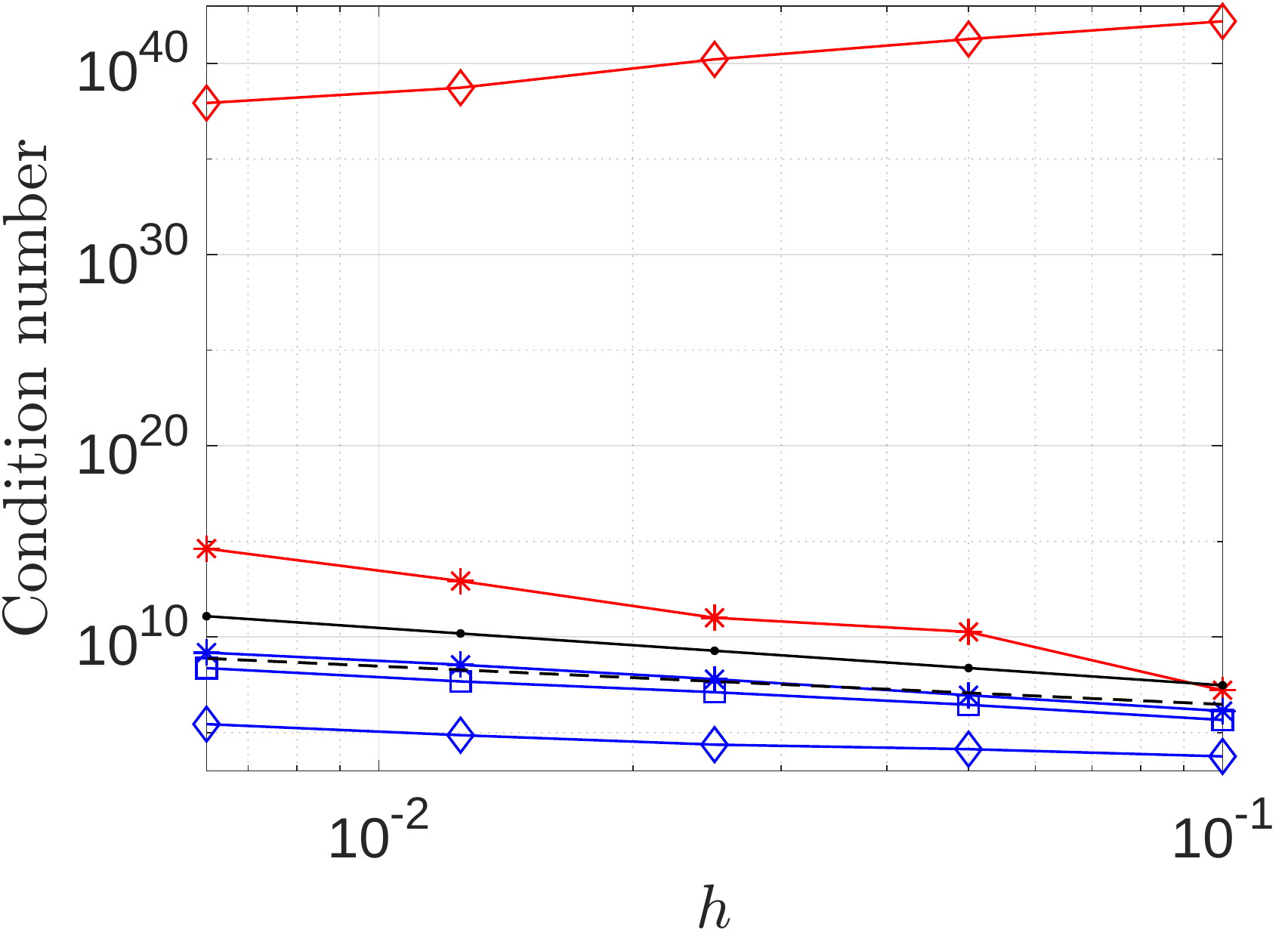}%2
	\end{subfigure} \\
	\includegraphics[scale=0.8]{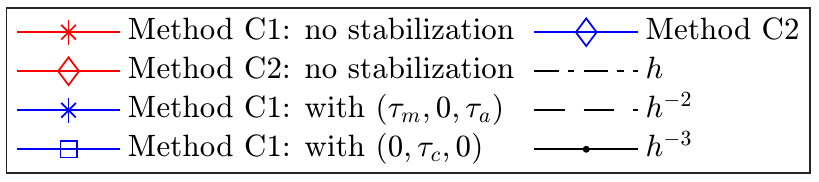}
	\caption{ 
		Example 1 Dirichlet BC: Method C1 $(\tau_m,0,\tau_a)$ vs. $(0,\tau_c,0)$ vs. no stabilization ($\tau_\bullet=0$) vs. Method C2. Pressure and velocity errors compared for the lowest order elements $P_1\times \RT_0\times Q_0$.
		Left: The $L^2$-error of the pressure versus mesh size h. Middle:  The $L^2$-error of the velocity field versus mesh size h. 
		Right: The condition number versus mesh size $h$.
		\label{fig:ex12_upc} }   
\end{figure}

The divergence is not impacted in any remarkable way; the errors have the same order of magnitude for each mesh size $h$. In summary, for Dirichet boundary conditions it seems that Method C2 performs best.

Next we study the performance of Method C1 for the Stokes equations with boundary conditions \eqref{eq:altBC}. 
With these boundary conditions there is no uniqueness issue for the pressure, so we do not have to introduce any Lagrange multiplier. 
Now the most natural choice of stabilization for Method C1 is $(0,\tau_c,0)$, see \eqref{eq:mC_variant10}-\eqref{eq:mC_variant01}. %without any $\check{\mathbf{A}}$-block since $\bfu \cdot \bfn$ is not prescribed. 
We compare this variant with the $(\tau_m,0,\tau_a)$-variant of the previous subsection. 
For this boundary condition the discretization seems to not require as much stabilization. We could also have chosen $\delta=0.25$. We can see in Figure \ref{fig:exBC2_upc} that both variants of stabilization perform optimally in all respects for the lowest order elements, order $k=0$, i.e., using the triple $P_1\times\RT_0\times Q_0$. However, for $k=1$, i.e., using $P_2\times\RT_1\times Q_1$, we see in the right panel of Figure \ref{fig:exBC2_upc} that variant $(0,\tau_c,0)$ behaves better with respect to the scaling of the condition number with mesh size. It is clear that the stabilization $(\tau_m,0,\tau_a)$ is not correct for this type of boundary condition since we see that the condition number of the resulting system is larger with this stabilization compared to not having any stabilization. 
%Increasing the parameter $\delta>0.25$ does not help to save the scaling issues of variant $(\tau_m,0,\tau_a)$ with these boundary conditions.
%it still scales badly, first 4: 9.945049e+05, 8.536824e+08, 6.391943e+08, 4.284308e+10 
Hence which stabilization terms one should include in Method C1 depends on the type of boundary conditions used. For the Dirichlet boundary conditions of \eqref{eq:BCu} the variant $(\tau_m,0,\tau_a)$ behaves better, while for the alternative boundary conditions of \eqref{eq:altBC} one should choose the $(0,\tau_c,0)$-variant. 
\begin{figure}[ht]
	\centering
	\begin{subfigure}{0.33\textwidth}
		\includegraphics[width=\textwidth]{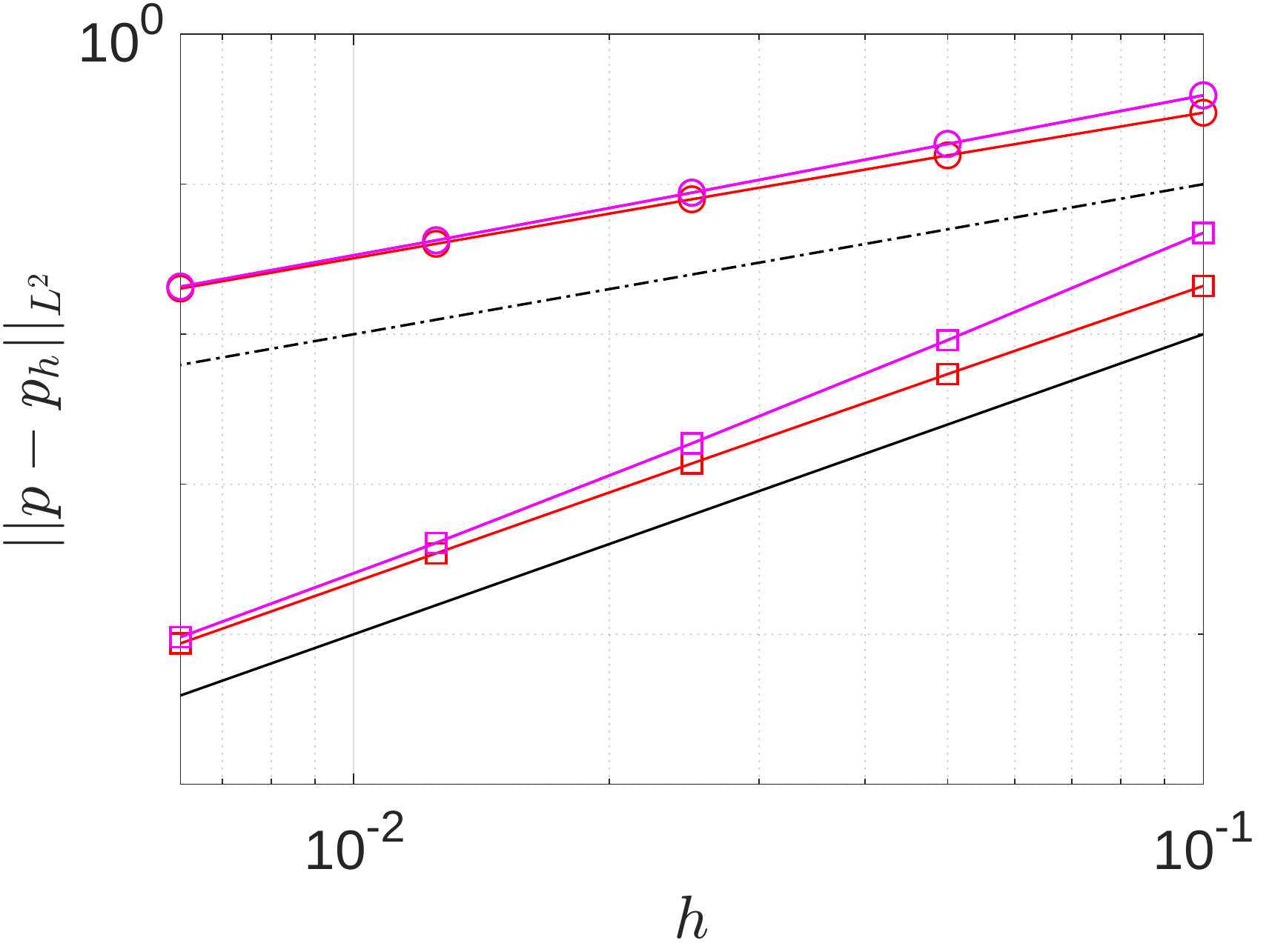}%3
	\end{subfigure} 
	\hfill
	\begin{subfigure}{0.33\textwidth}
		\includegraphics[width=\textwidth]{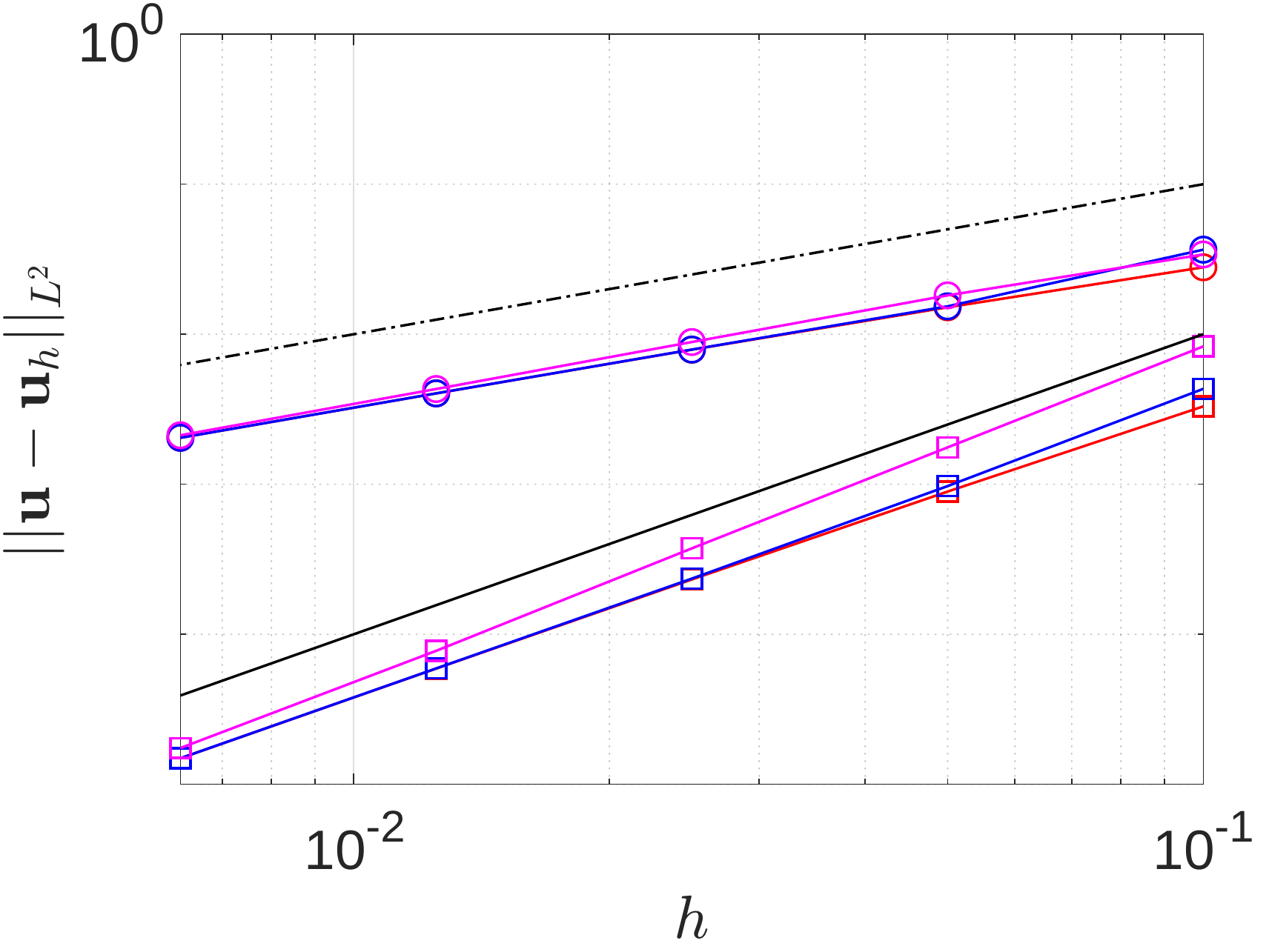}%3
	\end{subfigure} 
	\hfill
	\begin{subfigure}{0.33\textwidth}
		\includegraphics[width=\textwidth]{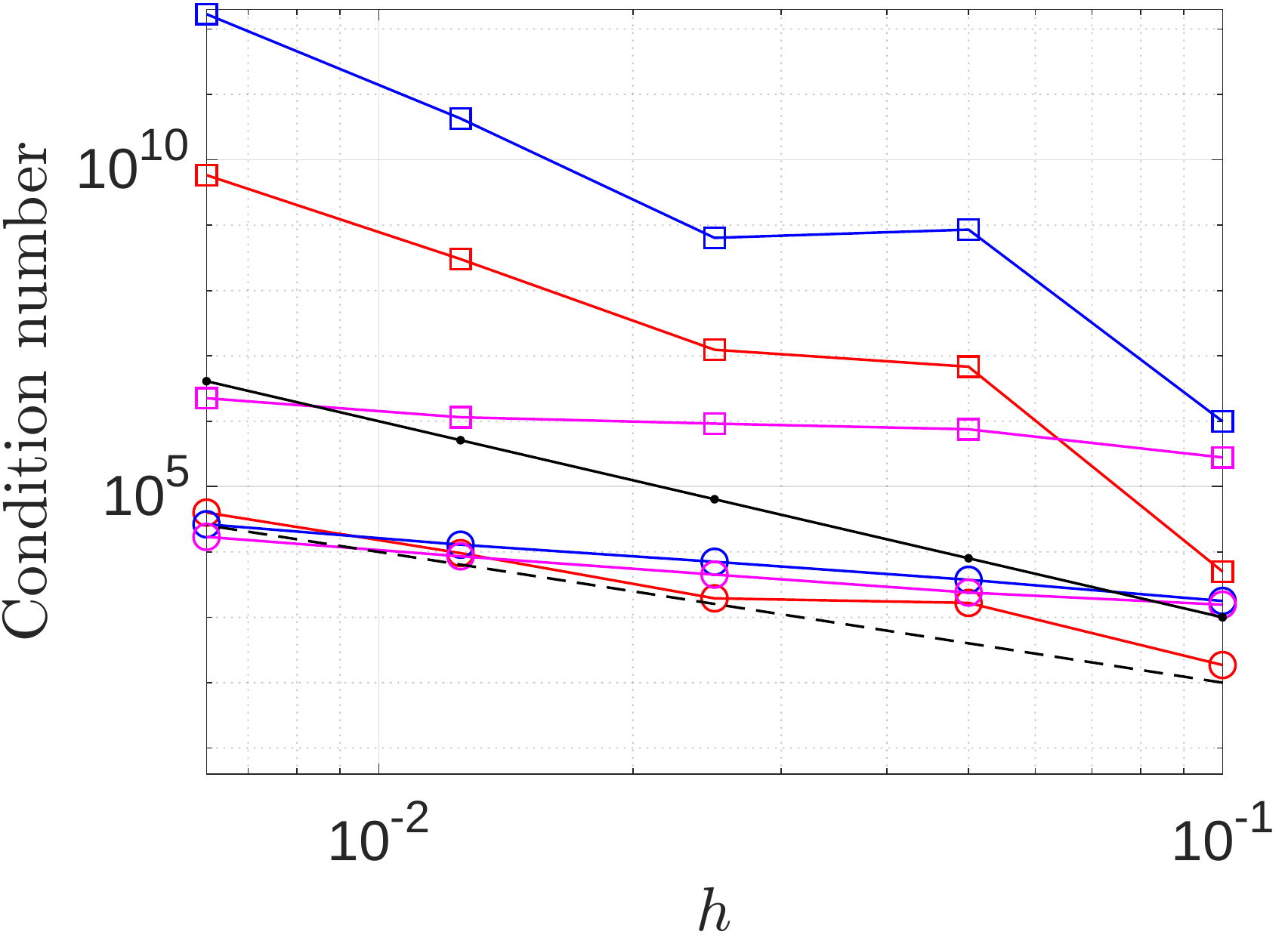}%6
	\end{subfigure}
	\includegraphics[scale=0.8]{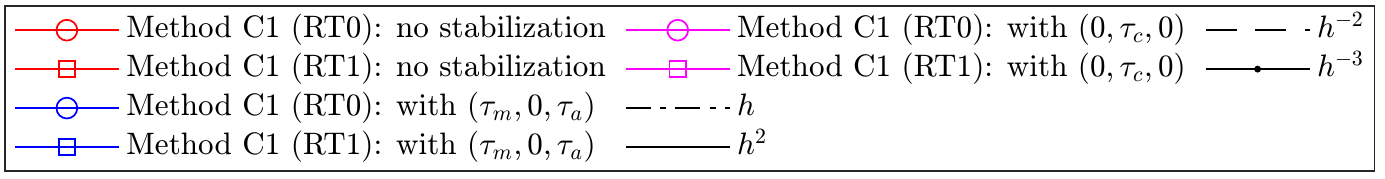}
	\caption{ 
		Example 1 alternative BC: Method C1 $(\tau_m,0,\tau_a)$ vs. $(0,\tau_c,0)$ vs. no stabilization ($\tau_{\bullet}=0$). Pressure and velocity errors and condition numbers for both element triples $P_1\times\RT_0\times Q_0$ and $P_2\times\RT_1\times Q_1$. 
		Left: The $L^2$-error of pressure versus mesh size $h$. Middle: The $L^2$-error of velocity versus mesh size $h$. Right: The $1$-norm condition number versus mesh size $h$.
		\label{fig:exBC2_upc} }                     
\end{figure}

\subsection{Example 2 - Coriolis force, pressure robustness}
We consider an example from~\cite{VolkerDivConstr2017}, where for $\bfu=(u_1,u_2,0)$ the momentum balance is modified with a Coriolis force $\pmb{c}\times\bfu$,
%= 2\Lambda \begin{bmatrix} 
%	0 \\ 0 \\ 1
%\end{bmatrix}\times\begin{bmatrix}
%u_1 \\ u_2 \\ 0
%\end{bmatrix}$, $\Lambda>0$ 
 so that we consider the problem 
\begin{align}
	-\dive(\mu\nabla\bfu - p\mathbf{I}) +\pmb{c}\times\bfu = -\dive(\mu\nabla\bfu - p\mathbf{I}) + 2\Lambda \begin{bmatrix}
		-u_2 \\ u_1
	\end{bmatrix} = 0, \quad \Lambda>0.
\end{align} 
In the weak formulations of Method NC and Method C, the term $(2\Lambda \begin{bmatrix}
	-u_2 \\ u_1
\end{bmatrix},\bfv)_{\Omega}$ is added in \eqref{eq:mNC_discreteStokes} and \eqref{eq:mC_momentumbalance}.

For standard boundary-fitted FEM, pressure robustness in a discretization of Stokes and incompressible Navier-Stokes is the following property (which we only describe very briefly, see \cite{VolkerDivConstr2017} for details): A modification of the momentum balance with a gradient term should only impact the pressure solution, i.e. 
\begin{align}\label{eq:presrob}
%\bff \mapsto \bff+\nabla\psi 
-\dive(\mu\nabla\bfu - p\mathbf{I}) + \nabla\psi 
\implies (\bfu_h,p_h)\mapsto (\bfu_h,p_h+\psi). 
\end{align}
In the present example, since $\crl (\pmb{c}\times\bfu) = 0$, there is a potential function $\psi$ such that $\nabla\psi = \pmb{c}\times\bfu$, and therefore the term $\pmb{c}\times\bfu$ should not impact the velocity in a pressure robust scheme.
A necessary condition for \eqref{eq:presrob} to hold is that $\dive\Vk\subset\Qk,$ and a question is whether or not this also holds in the unfitted setting. 
%We wonder: $\dive\Vk\subset\Qk \implies$ \eqref{eq:presrob} for Method NC and Method C? 

\begin{figure}[ht]
	\centering 
	\begin{subfigure}[b]{0.45 \textwidth}  	
		\centering	
		\includegraphics[scale=0.30]{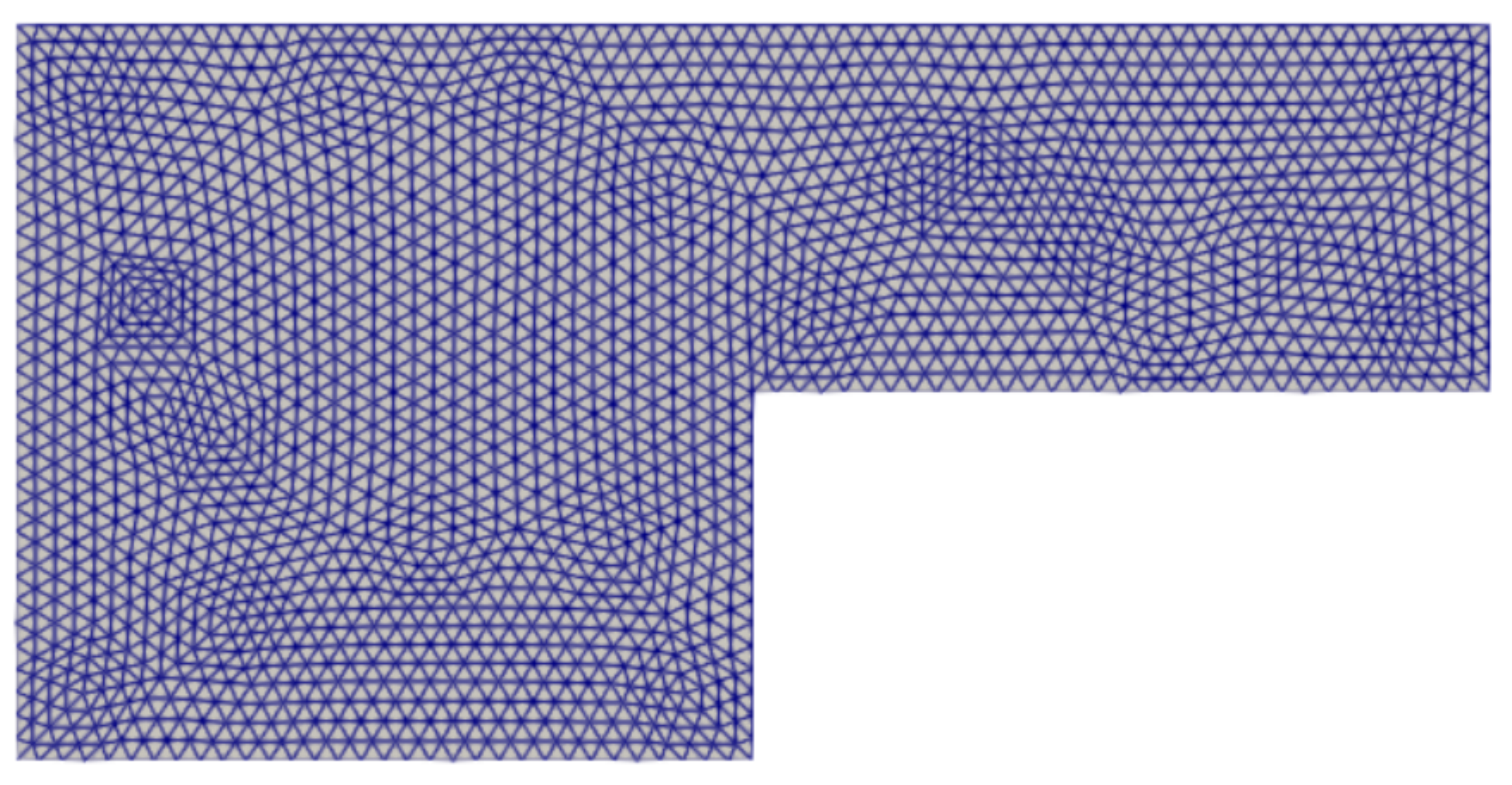}
	\end{subfigure}
	\begin{subfigure}[b]{0.45 \textwidth}  	
		\centering	
		\includegraphics[scale=0.16]{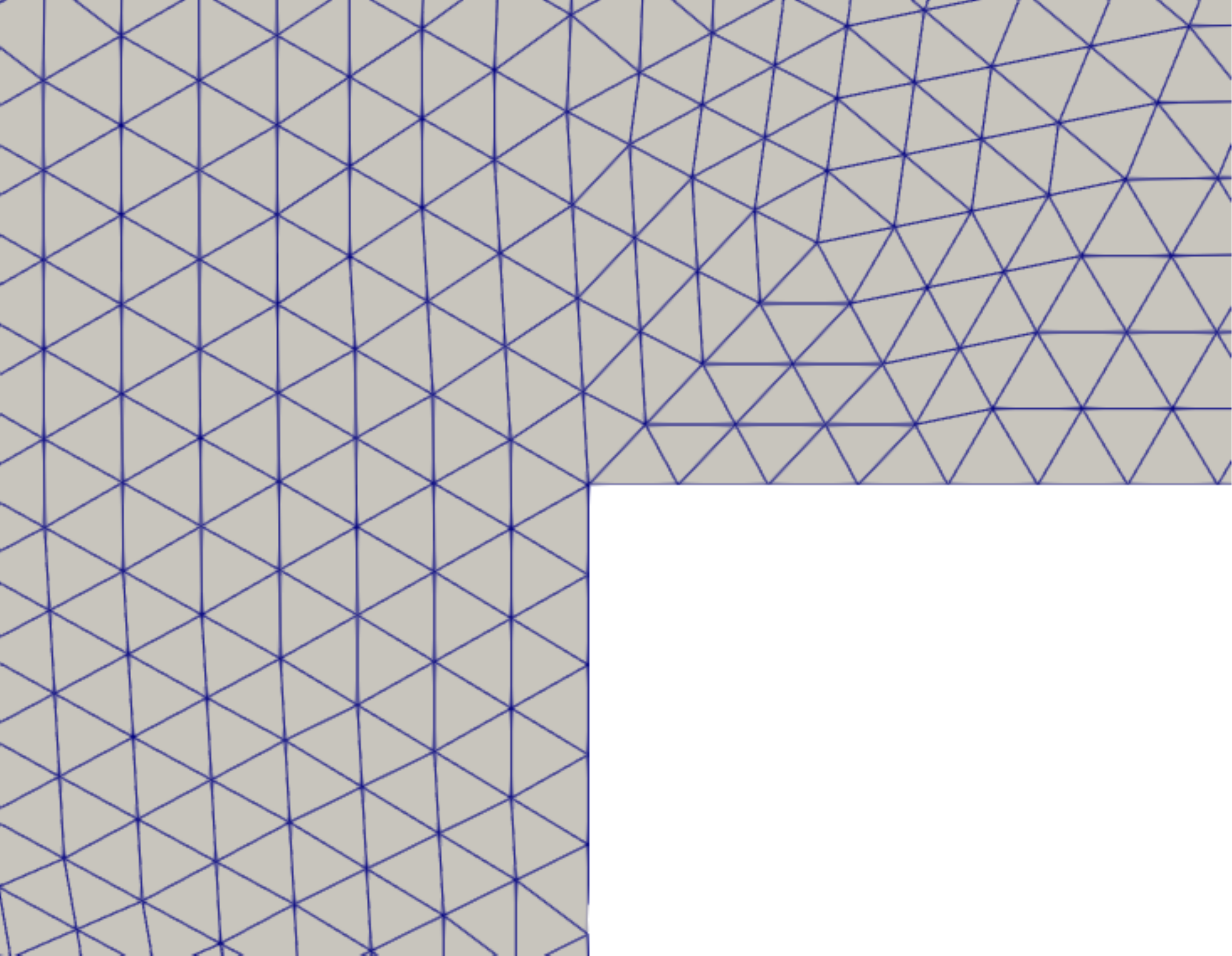}
	\end{subfigure}
	%\\
	%\begin{subfigure}[b]{0.45 \textwidth}  	
	%	\centering	
	%	\includegraphics[scale=0.30]{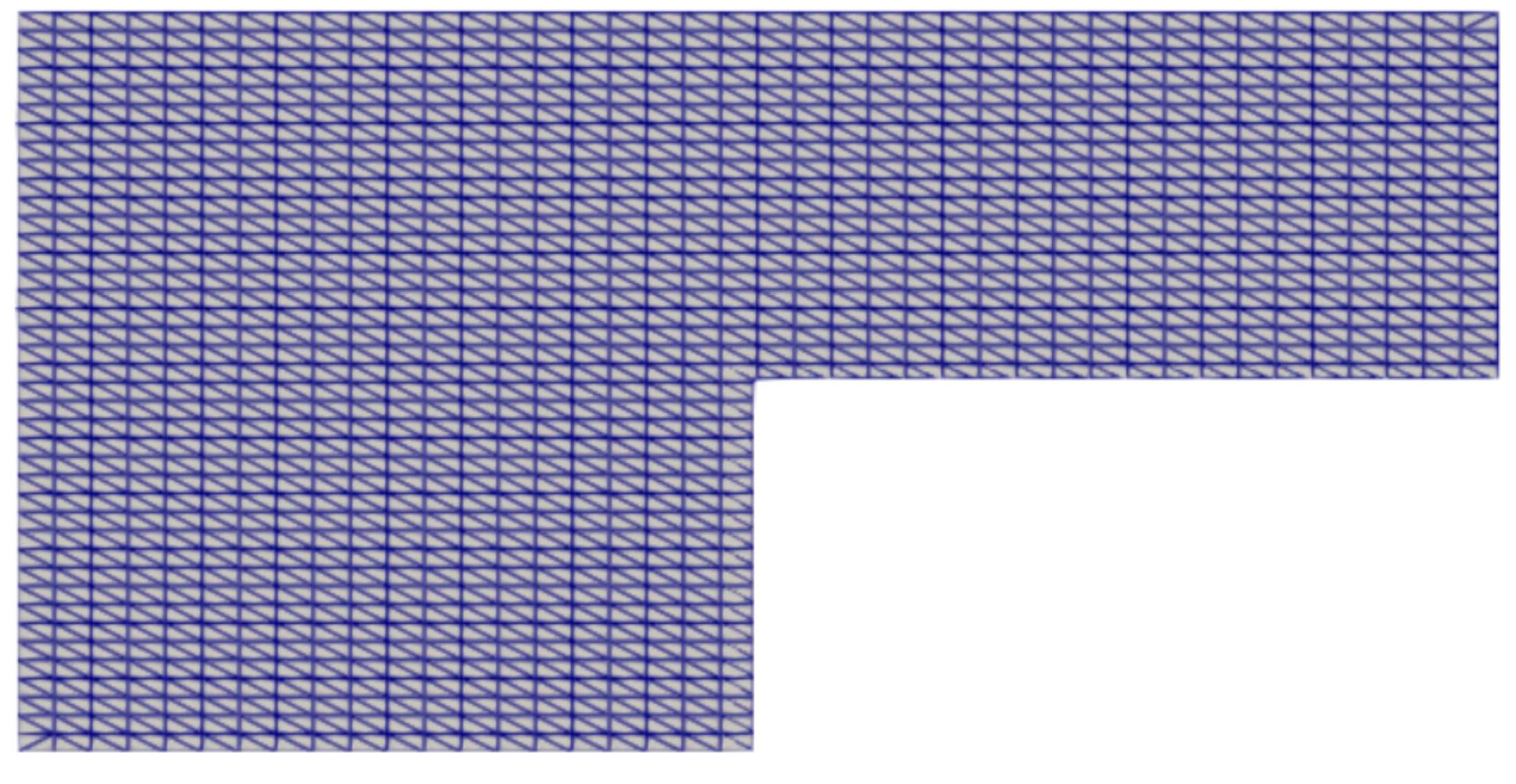}
	%\end{subfigure}
	%\begin{subfigure}[b]{0.45 \textwidth}  	
%		\centering	
%		\includegraphics[scale=0.16]{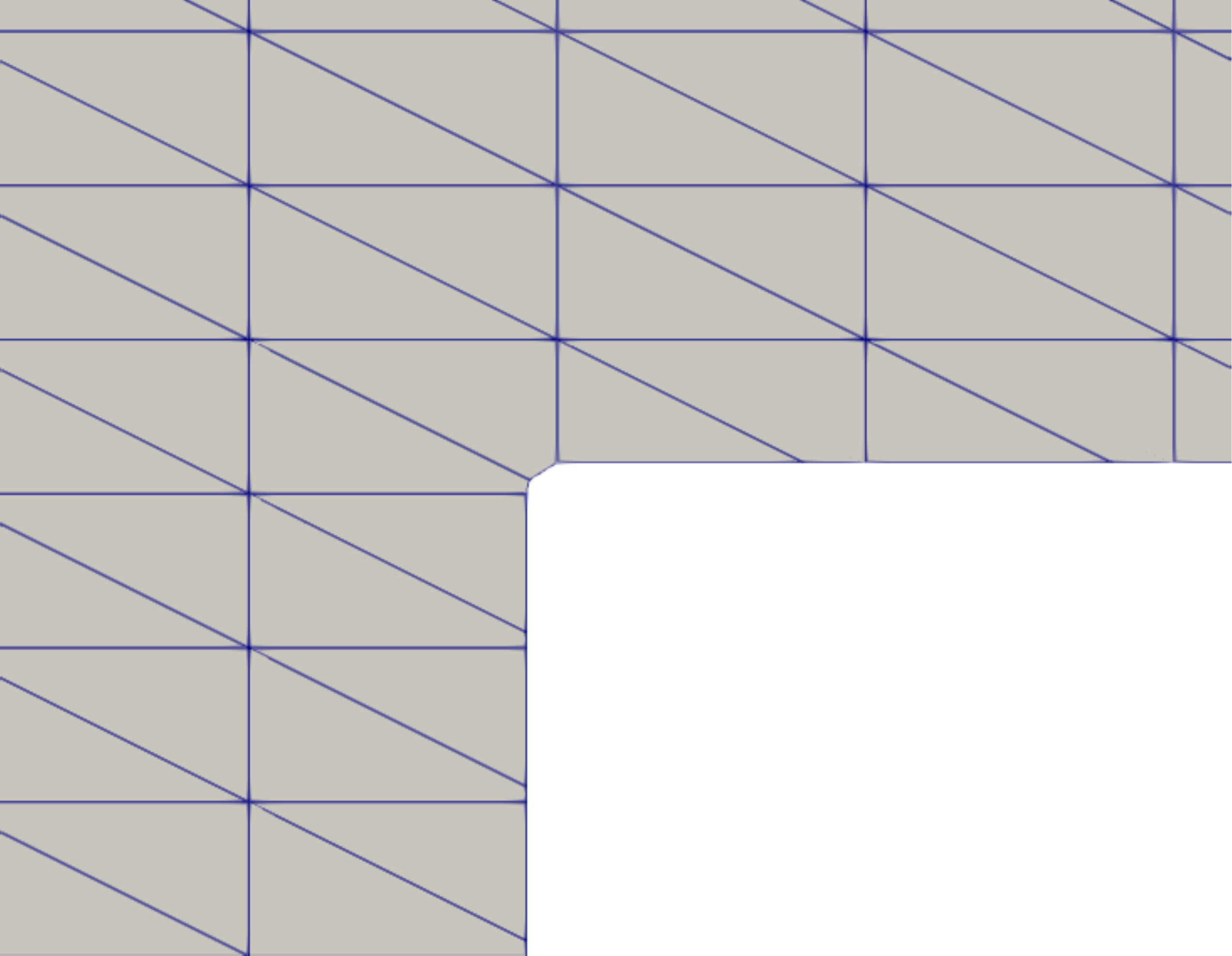}
%	\end{subfigure}
	\caption{ 
			Example 2: A fitted computational mesh with $h=\max_{T\in\mesh}h_T=0.0742408$. A zoom-in on the corner at $(x,y)=(2,1)$ is shown on the right panel. 
			%Fitted and unfitted mesh, with zoom-ins on $\Gamma$. Top: The computational mesh with $h=\max_{T\in\mesh}h_T=0.0742408$. Bottom:  Unfitted mesh.
			\label{fig:presrob_fitted_mesh} }      
\end{figure}

\begin{figure}[ht]
	\centering 
	\begin{subfigure}[b]{0.45 \textwidth}  	
		\centering	
		\includegraphics[scale=0.19]{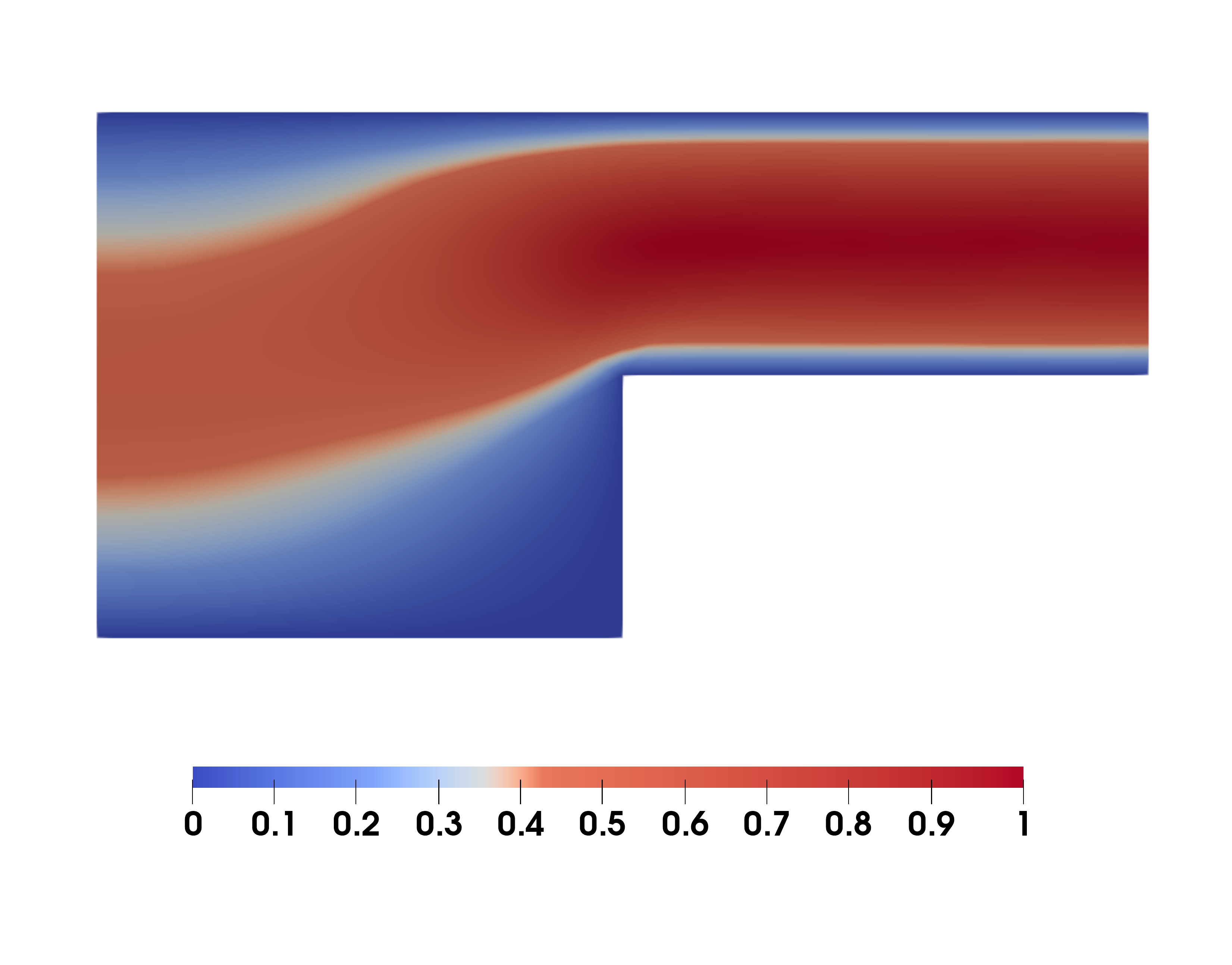}
	\end{subfigure} 
	\begin{subfigure}[b]{0.45 \textwidth}  	
		\centering	
		\includegraphics[scale=0.19]{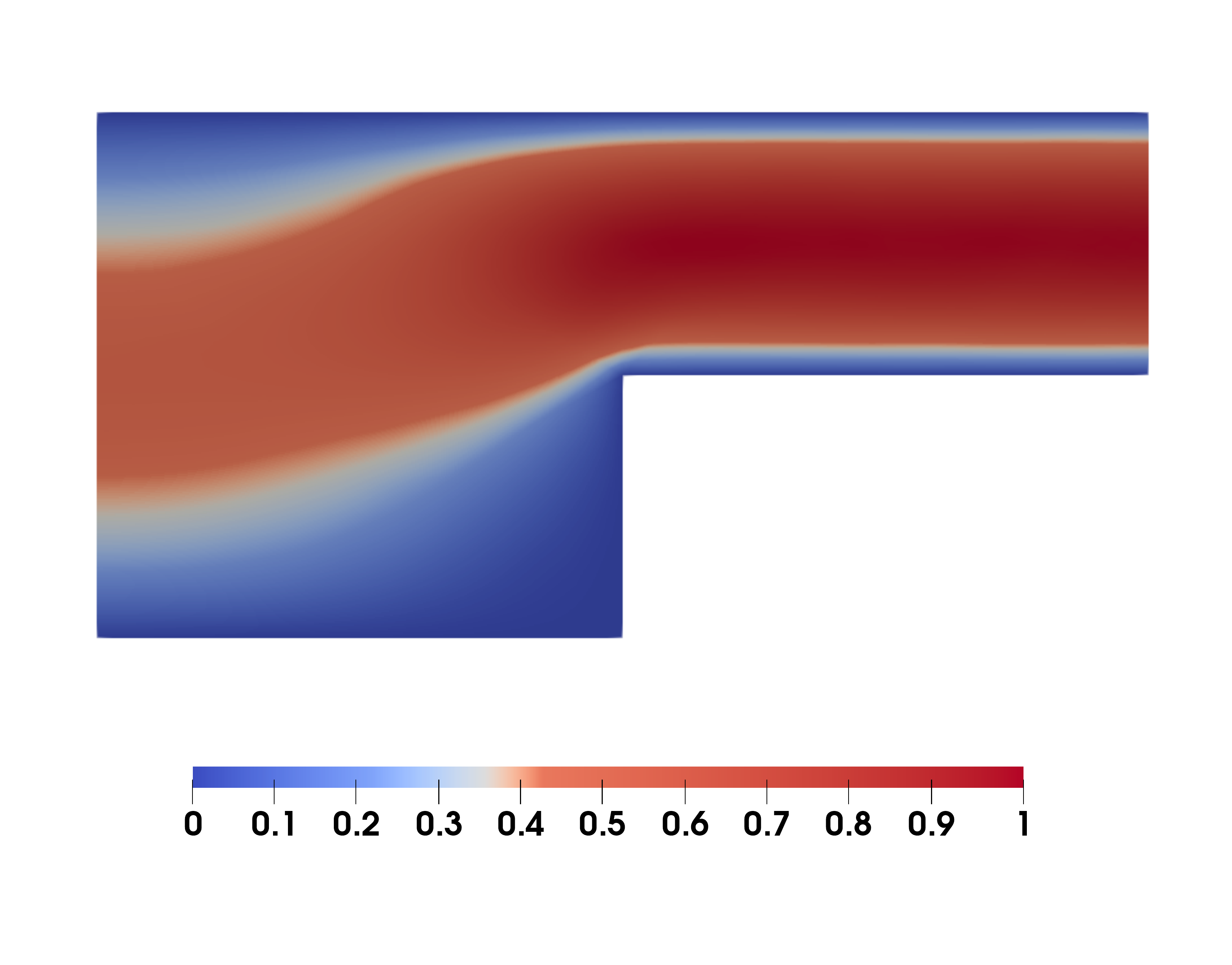}
	\end{subfigure}
	\caption{ 
			Example 2: Computations on the boundary-fitted mesh with $h=\max_{T\in\mesh}h_T=0.0742408$. Magnitude of the approximate velocity field using the method of \cite{WangRobust2009} with the element pair $\BDM_1\times Q_0$. Left: $\Lambda=0.$ Right: $\Lambda=2000.$
			\label{fig:presrob_BDM1_fitted} }            
\end{figure}

We take $\mu=0.01$ and compare the approximate velocity field for $\Lambda=0$ with $\Lambda=2000$.  The geometry is an L-shaped domain $[0,2]\times [0,2] \cup [2,4]\times [1,2]$.  On the lower and upper boundary we have no-slip boundary conditions (i.e. $\bfu=0$). At the inlet, at $x=0$, and at the outlet, $x=4$, we have the following boundary conditions for $\bfu$:
\begin{align}
	\bfu_{\text{in}} &= (y(2-y)/2,0)^T, \\
	\bfu_{\text{out}} &= (4(2-y)(y-1),0)^T.
\end{align}

On a fitted mesh, Method NC without stabilization terms and imposing the normal component of the boundary condition strongly, gives the same discretization as the method of \cite{WangHdiv2007,WangRobust2009}. In Figure \ref{fig:presrob_fitted_mesh} we show the fitted mesh we use and in Figure \ref{fig:presrob_BDM1_fitted} we show the approximate velocity for $h=\max_{T\in\mesh}h_T=0.0742408$.  Results for $\Lambda=0$ and $\Lambda=2000$ are shown in the right and the left panel of the figure, respectively. We see that the velocity field is not affected by the added gradient term. %Hence the method is numerically pressure robust.

For Method NC we have divided the penalty $\lambda_{\bfu}$ in the tangential component $\lambda_{u_t}=\lambda_t=1$ and the normal component $\lambda_{u_n}=400$. 
%The stabilization parameters are chosen to be $\tau_a=\tau_b=1$. 
For Method C1 we %choose $\tau_m=\tau_a=\tau_b=1$ and 
also choose $\lambda_{u_n}=400$. 

Let the unfitted boundary be $\Gamma=\{(x,y)\in\RR^2: x=2,\ 0\leq y \leq 1 \}\cup \{(x,y)\in\RR^2: y=1,\ 2\leq x \leq 4 \}$, and consider a background mesh $\Omega_0=[0,4]\times [0,2]$. The unfitted mesh is shown in Figure \ref{fig:presrob_unfitted_mesh}. We see also on this unfitted mesh that the Coriolis force does not seem to impact the velocity field, see the top panels of Figure \ref{fig:presrob_BDM1_unfitted}. 

If we on the other hand use the test spaces $(\Vk, \Qk^0)$ and stabilize the pressure using $s_p$, the Coriolis force has an impact on the velocity solution, see the bottom panels of Figure \ref{fig:presrob_BDM1_unfitted}. Note that apart from the qualitative change of the flow there is also a slight change in the magnitude of the velocity field. 
With Method C1 \eqref{eq:mC_curleq} the situation is analogous, see Figure \ref{fig:presrob_RT1_unfitted}.
\begin{figure}[ht]
	\centering 
	\begin{subfigure}[b]{0.45 \textwidth}  	
		\centering	
		\includegraphics[scale=0.30]{presrob_unfitted_mesh2.pdf}
	\end{subfigure}
	\begin{subfigure}[b]{0.45 \textwidth}  	
		\centering	
		\includegraphics[scale=0.16]{presrob_unfitted_mesh_zoom.pdf}
	\end{subfigure}
	\caption{ 
		Example 2: The uniform unfitted mesh. In the right panel a zoom of the unfitted part $\Gamma=\{(x,y)\in\partial\Omega: x=2 \text{ or } y=1\}$ is shown.
		\label{fig:presrob_unfitted_mesh} }
\end{figure}
\begin{figure}[ht]  	
	\centering	
	\begin{subfigure}[b]{0.45 \textwidth}  	
		\centering	
		\includegraphics[width=\textwidth]{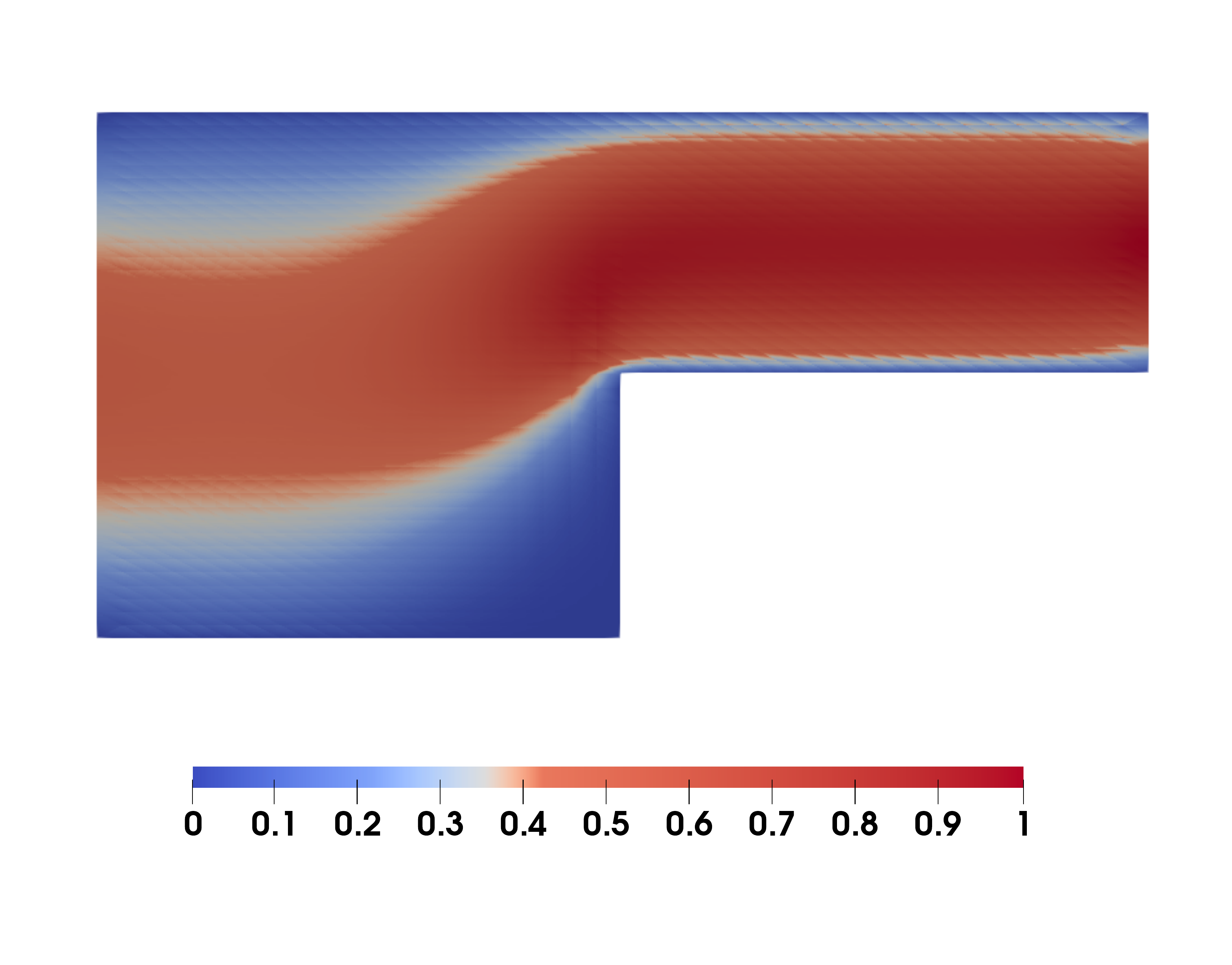}
		\end{subfigure}\quad\quad\quad
	\begin{subfigure}[b]{0.45 \textwidth}  	
		\centering	
		\includegraphics[width=\textwidth]{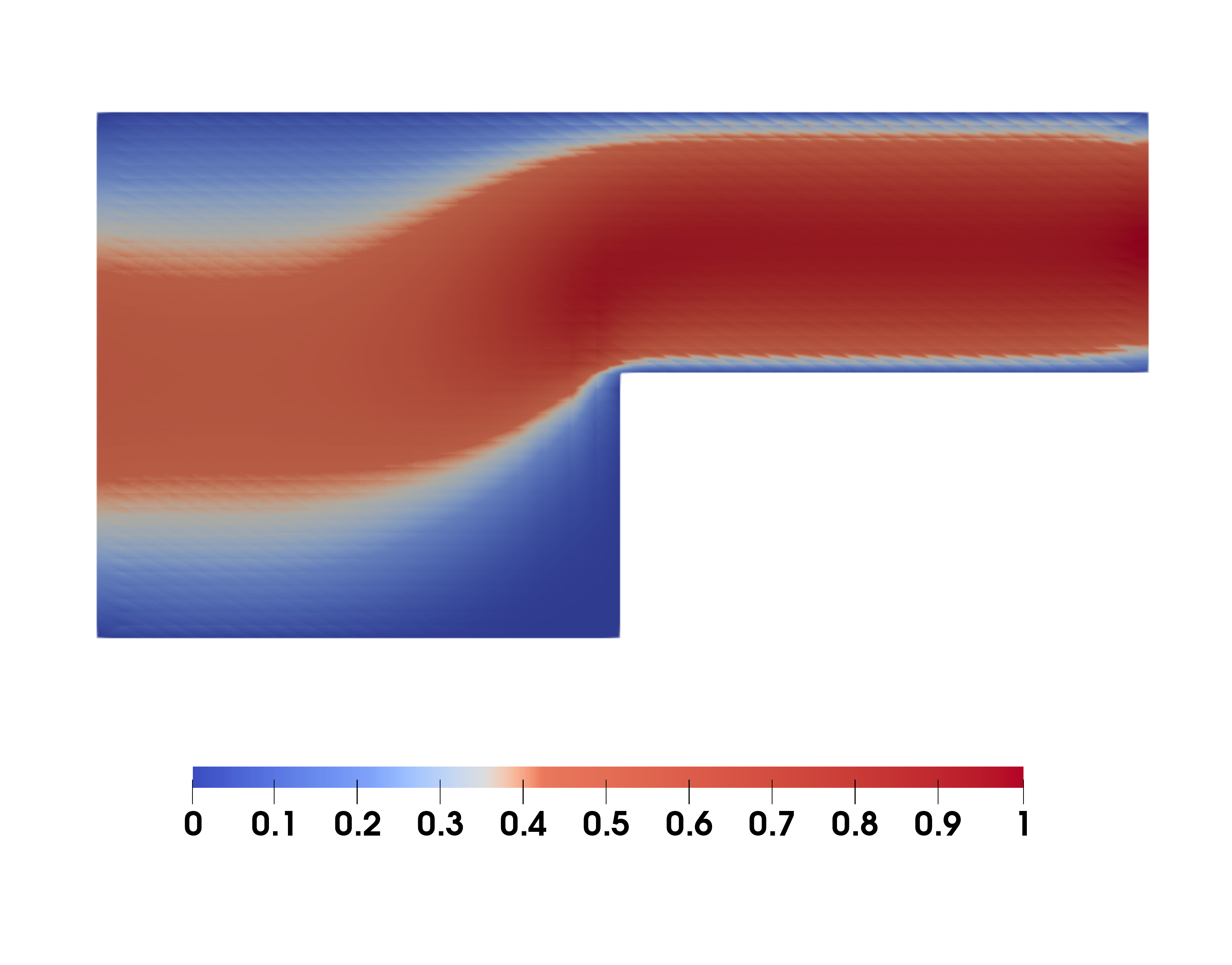}
	\end{subfigure}\\
	\begin{subfigure}[b]{0.45 \textwidth}  	
		\centering	
		\includegraphics[width=\textwidth]{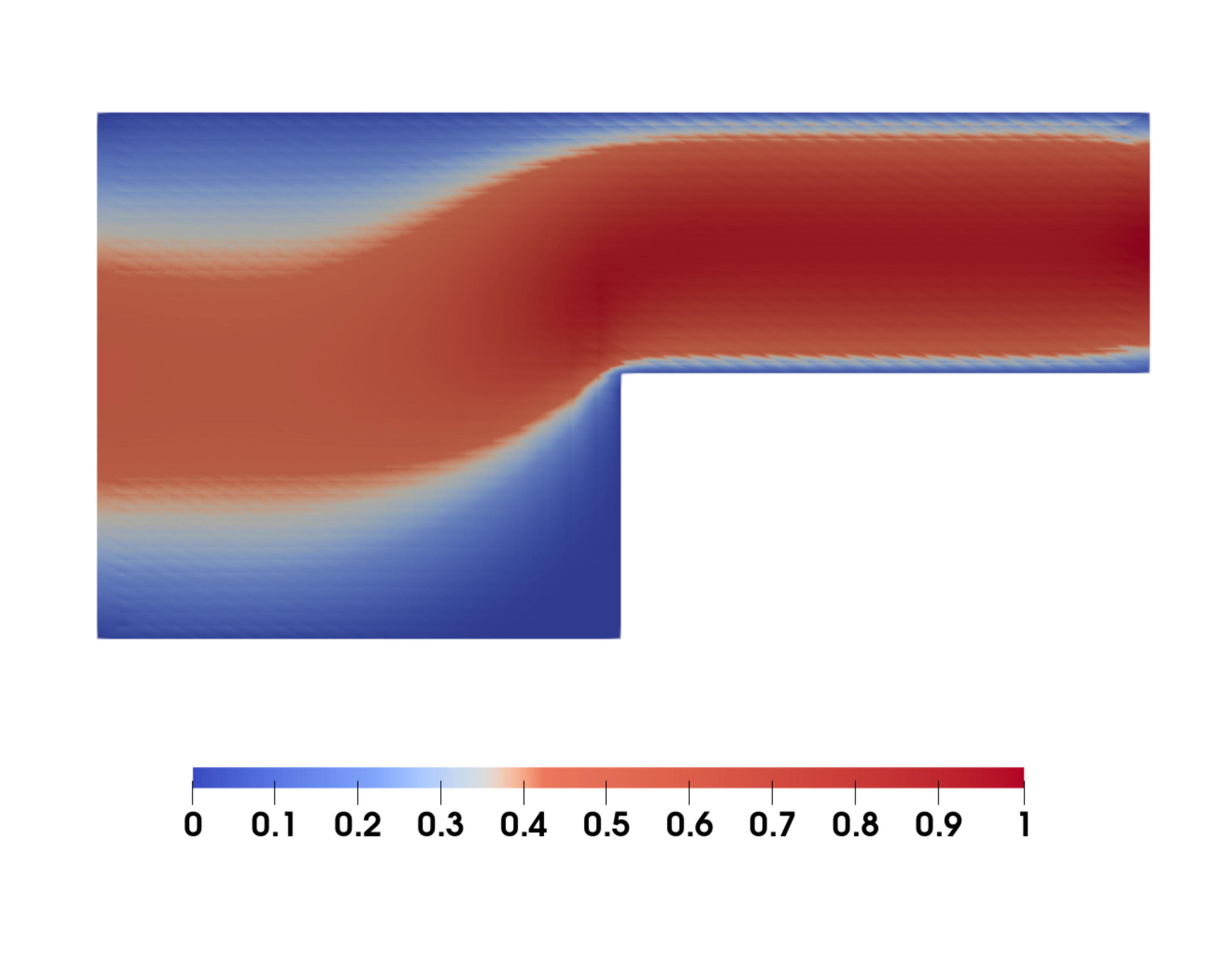}
	\end{subfigure}\quad\quad\quad
	\begin{subfigure}[b]{0.45 \textwidth}  	
		\centering	
		\includegraphics[width=\textwidth]{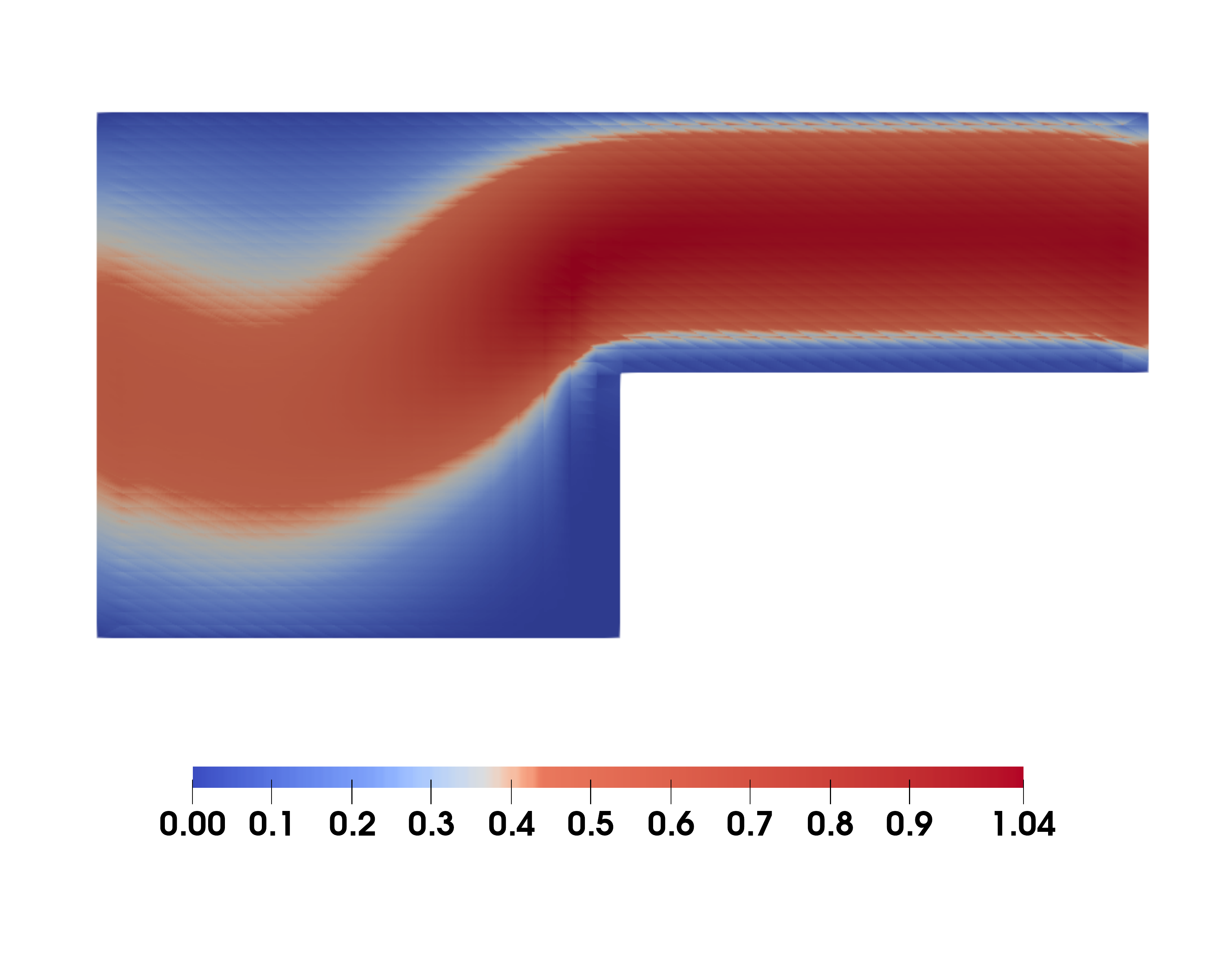}
	\end{subfigure}
	\caption{ 
		Example 2: The magnitude of the approximate velocity field using Method NC ($\BDM_1\times Q_0$), $\Lambda=0$ in the left panels and $\Lambda=2000$ in the right panels. 
		Top: Results using the proposed method.
		Bottom: Results using the standard approach. %a standard Lagrange multiplier approach to set the mean value of functions in $\Qkk$ and replacing the stabilization term $s_b$ with $s_p$.		
		\label{fig:presrob_BDM1_unfitted} } 
\end{figure}

% \begin{figure}[ht]  	
% 	\centering	
% 	\begin{subfigure}[b]{0.45 \textwidth}  	
% 		\centering	
% 		\includegraphics[width=\textwidth]{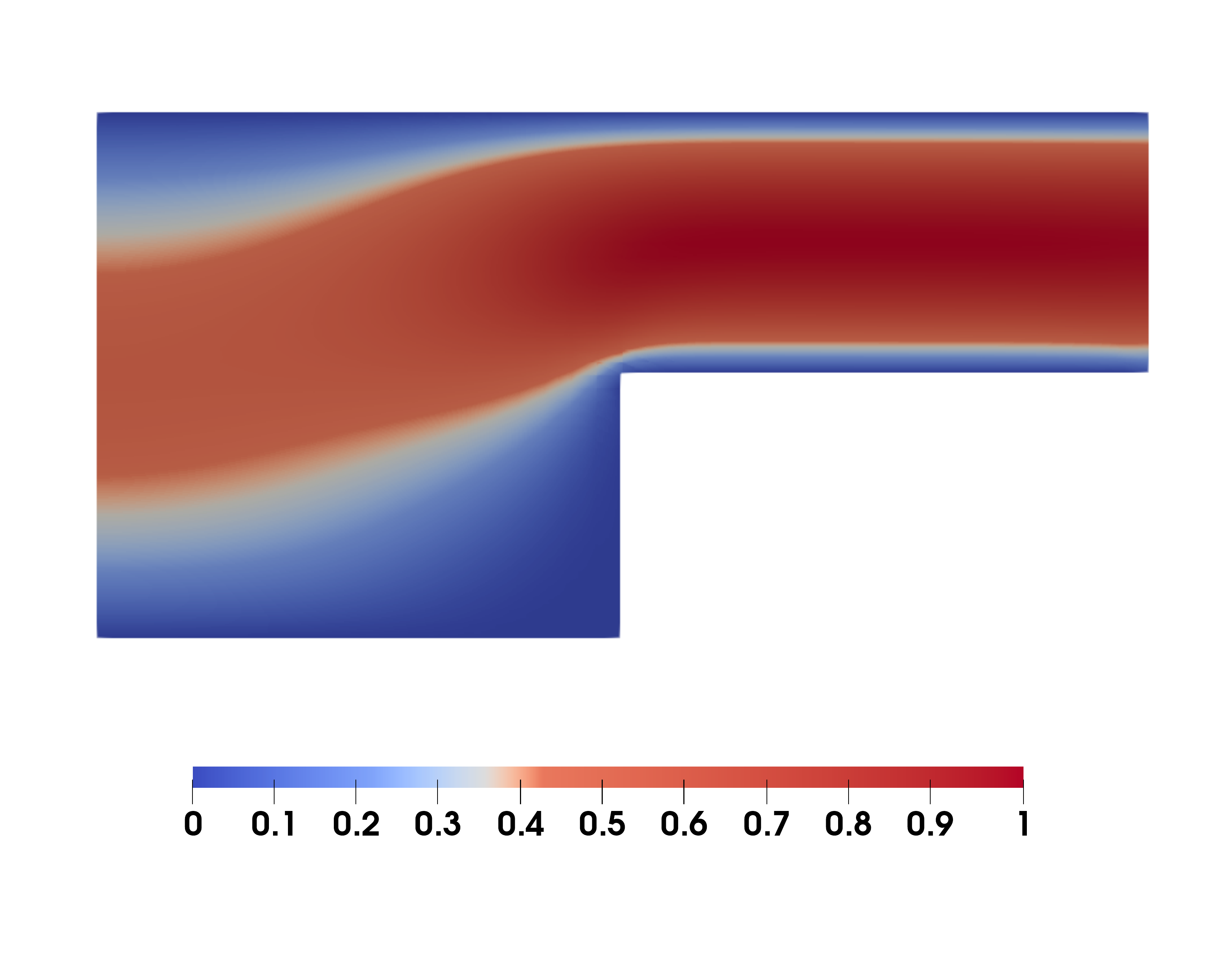}
% 			\end{subfigure}
% 	\begin{subfigure}[b]{0.45 \textwidth}  	
% 		\centering	
% 		\includegraphics[width=\textwidth]{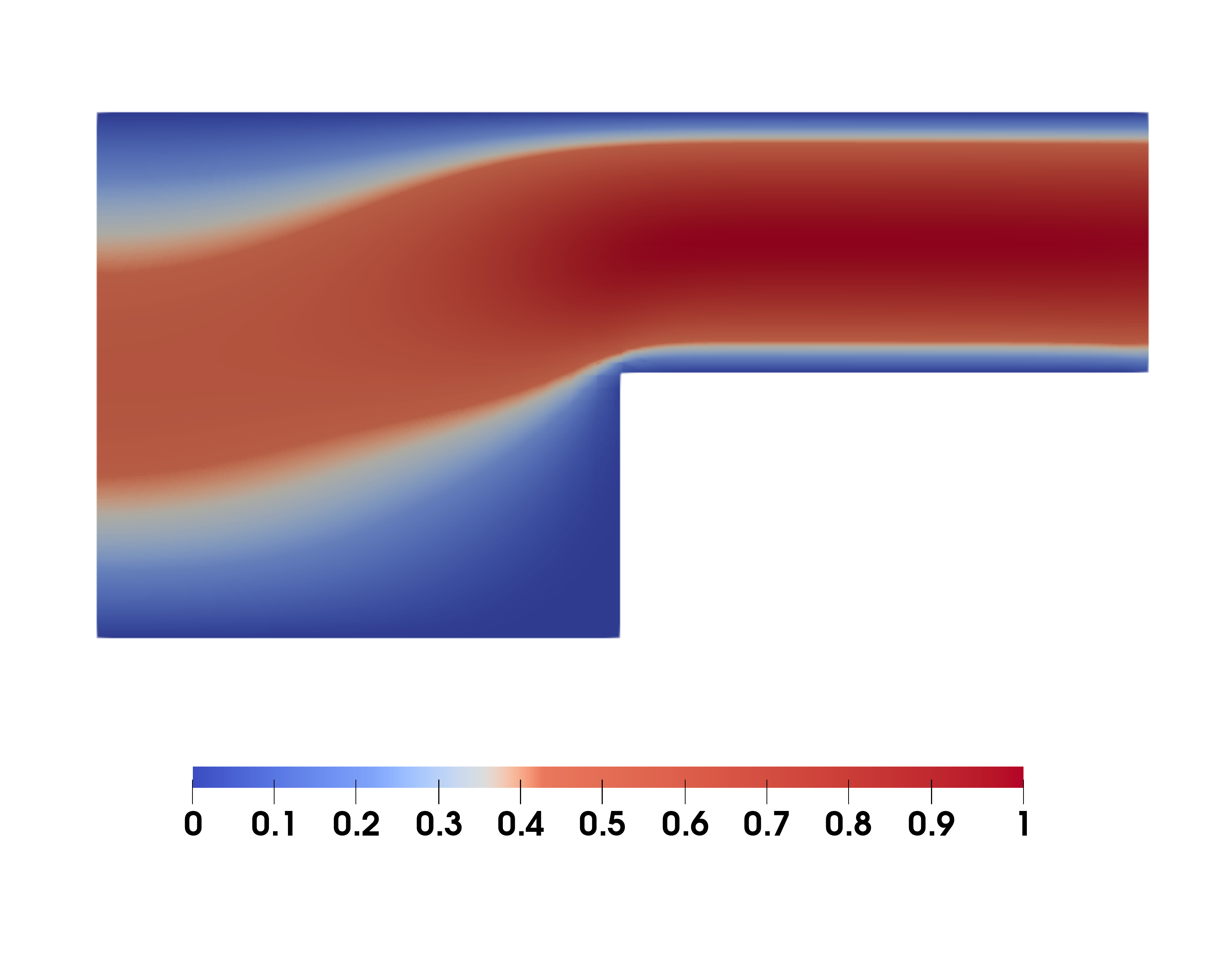}
% 			\end{subfigure}\\
% 	\begin{subfigure}[b]{0.45 \textwidth}  	
% 		\centering	
% 		\includegraphics[width=\textwidth]{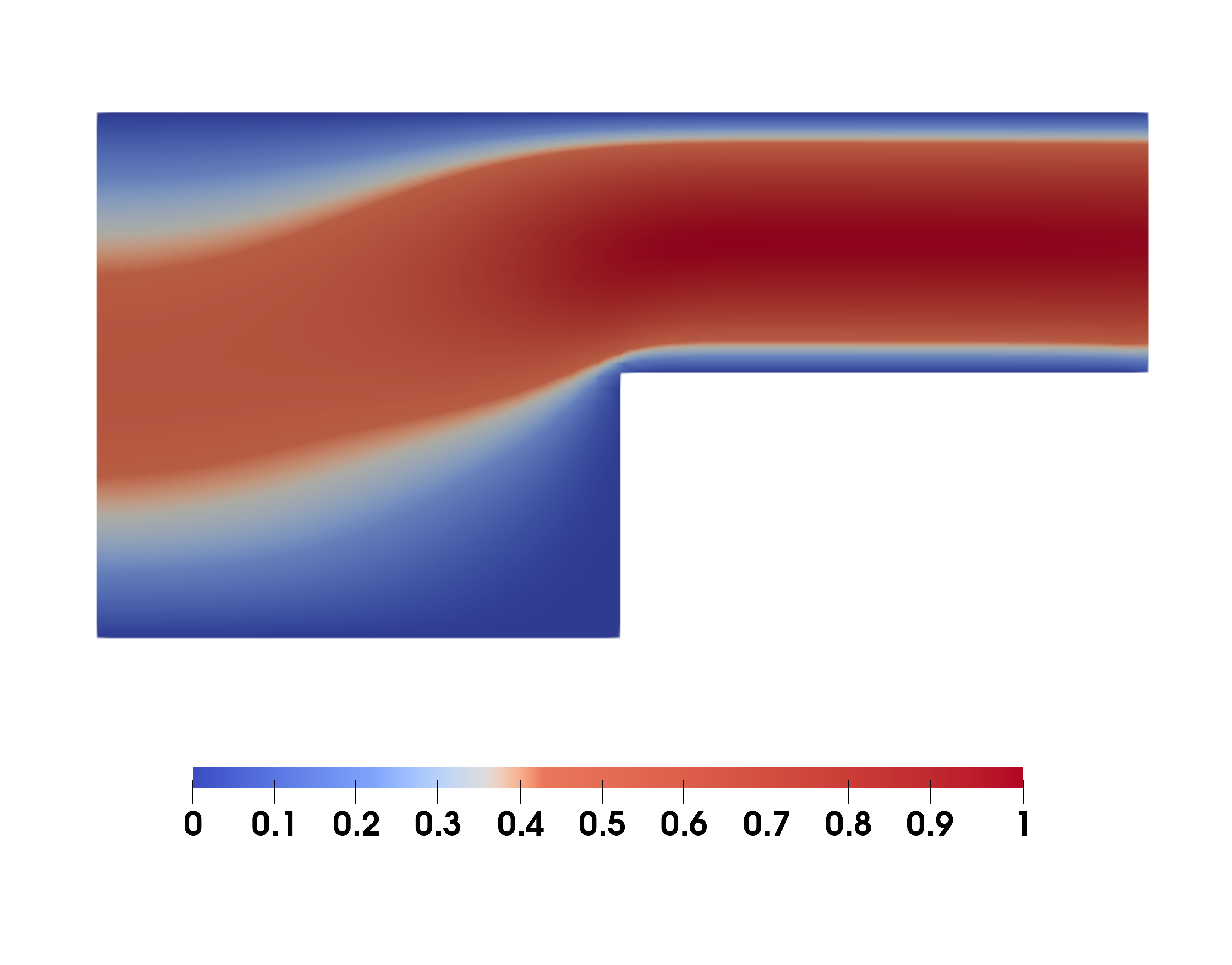}
% 	\end{subfigure}
% 	\begin{subfigure}[b]{0.45 \textwidth}  	
% 		\centering	
% 		\includegraphics[width=\textwidth]{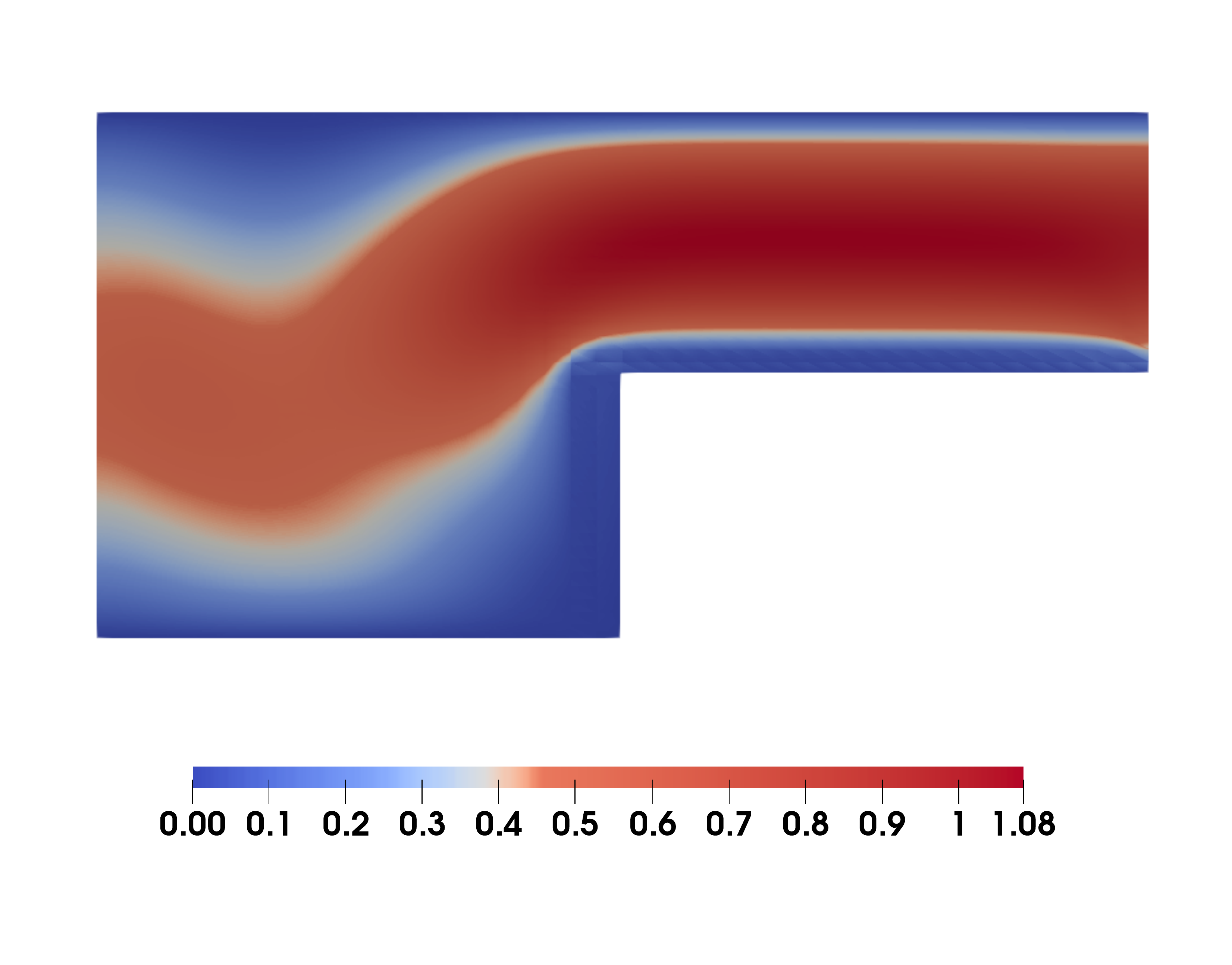}
% 	\end{subfigure}	\caption{ 
% 		Example 2: The magnitude of the approximate velocity field using Method C1 ($P_2\times\RT_1\times Q_1$), $\Lambda=0$ in the left panels and $\Lambda=2000$ in the right panels. 
% 		Top:  Results using the proposed method.
% 		Bottom: Results using standard method. %a standard Lagrange multiplier approach to set the mean value of functions in $\Qkk$ and replacing the stabilization term $s_b$ with $s_p$.
% 		\label{fig:presrob_RT1_unfitted} }            
% \end{figure}

Of greatest detrimental effect to pressure robustness in this example seems to be the changing of the stabilization $s_b$ to $s_p,$ that is Modification 2 from Section \ref{sec:num_lagr}. Modification 1 changes the magnitude of the divergence error by a smaller amount, and the macroscopic behaviour is not altered as much.

\subsection{Example 3 - No-flow, pressure robustness}\label{sec:num_ex3}
We consider another benchmark from \cite[Example 1.1]{VolkerDivConstr2017} which proves particularly tough for weak imposition of Dirichlet boundary conditions. 
We take $\mu=1$ and the domain is the unit square $\Omega=[0,1]^2$. In order to study our unfitted discretizations we take $\Omega_0=[0,1]\times [0,1+\epsilon]$, $\epsilon=10^{-12}$, and define $\Omega=[0,1]^2$ with the unfitted boundary $\Gamma=\{(x,y): y=1\}$. The boundary conditions are no-slip, i.e. $\bfu=\mathbf{0}$ on $\partial\Omega$ and datum 
\begin{align}
	\mathbf{f}=(0,\text{Ra}(1 - y + 3y^2)), \quad \text{in } \Omega,
\end{align}
where $\text{Ra}>0$. The exact solution is $\bfu=\mathbf{0}$ and $p=\text{Ra}(y^3 - y^2/2 + y - 7/12)$, whereby a change in the parameter $\text{Ra}$ changes only the pressure. A pressure robust numerical method should retain this property. 

We consider $\text{Ra}\in\{10^2,10^4,10^6\}$ and study the two ways of imposing the normal component of the Dirichlet boundary conditions weakly on the unfitted boundary; with Nitsche's method and with a Lagrange multiplier variable. We impose the boundary conditions strongly on fitted boundary edges for every method considered here.
For Method NC %the interior penalty $\lambda_t=1,$ and 
we pick the pair $\BDM_1\times Q_0$. For Method C1 we pick the triple $P_1\times\RT_0\times Q_0$ and so $P_1\times\RT_0\times Q_0\times Q^{\Gamma}_0$ is chosen for Method C2. A heatmap of the pressure solution given by Method C1 with Ra$=10^6$ is shown in Figure \ref{fig:ex3_pressure}. For each method we compare two alternatives: our proposed method versus the standard approach. 
In Method C2 we only exchange the $s_b$-forms with $s_p$.
See Figure \ref{fig:presrob2} for the $\bfH_0^1$-error of the computed solutions.

\begin{figure}[ht]  	
	\centering	
	\includegraphics[scale=0.2]{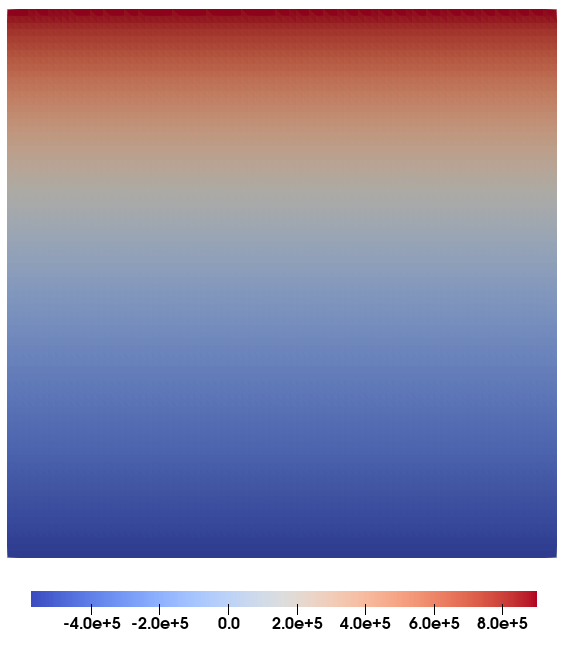}
	\caption{ 
		Example 3: Heatmap of the pressure using Method C1 ($P_1\times\RT_0\times Q_0$) for $h=0.0125$ and Ra$=10^6$.
		\label{fig:ex3_pressure} }
\end{figure}

\begin{figure}[ht]  	
	\centering	
	\begin{subfigure}[b]{0.3 \textwidth}  	
		\centering	
		\includegraphics[width=\textwidth]{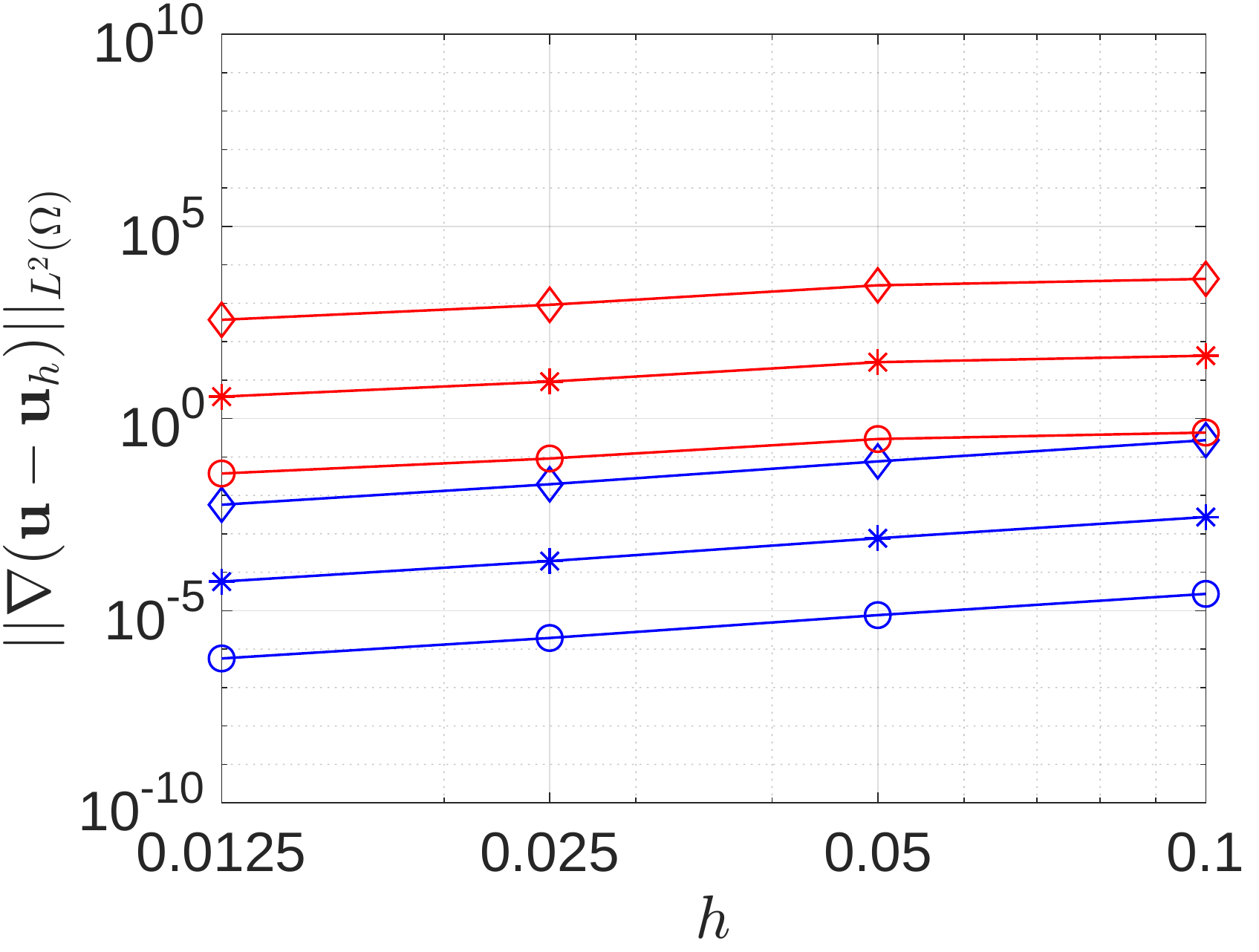}
	\end{subfigure}
	\begin{subfigure}[b]{0.3 \textwidth}  	
		\centering	
		\includegraphics[width=\textwidth]{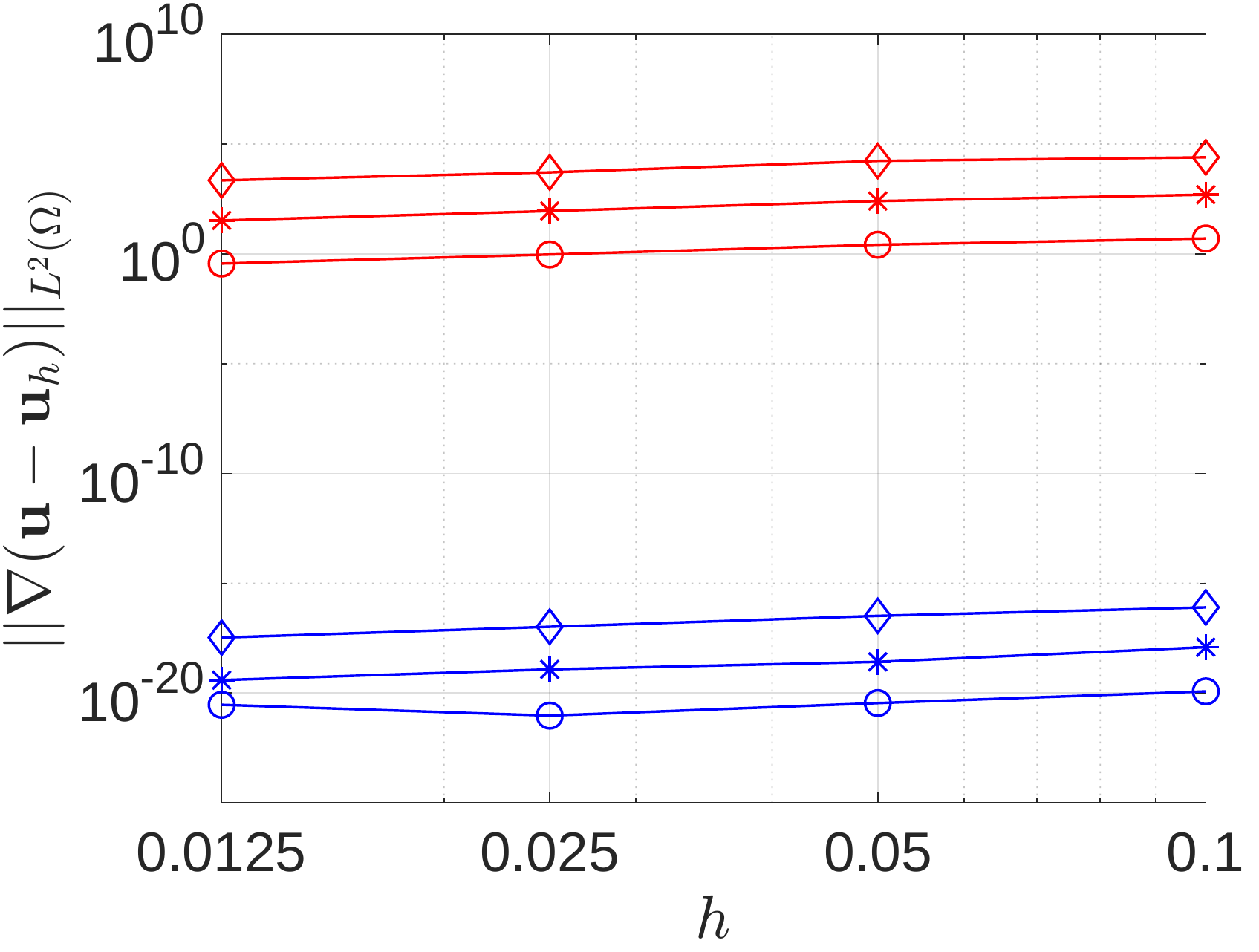}
	\end{subfigure}
	\begin{subfigure}[b]{0.3 \textwidth}  	
		\centering	
		\includegraphics[width=\textwidth]{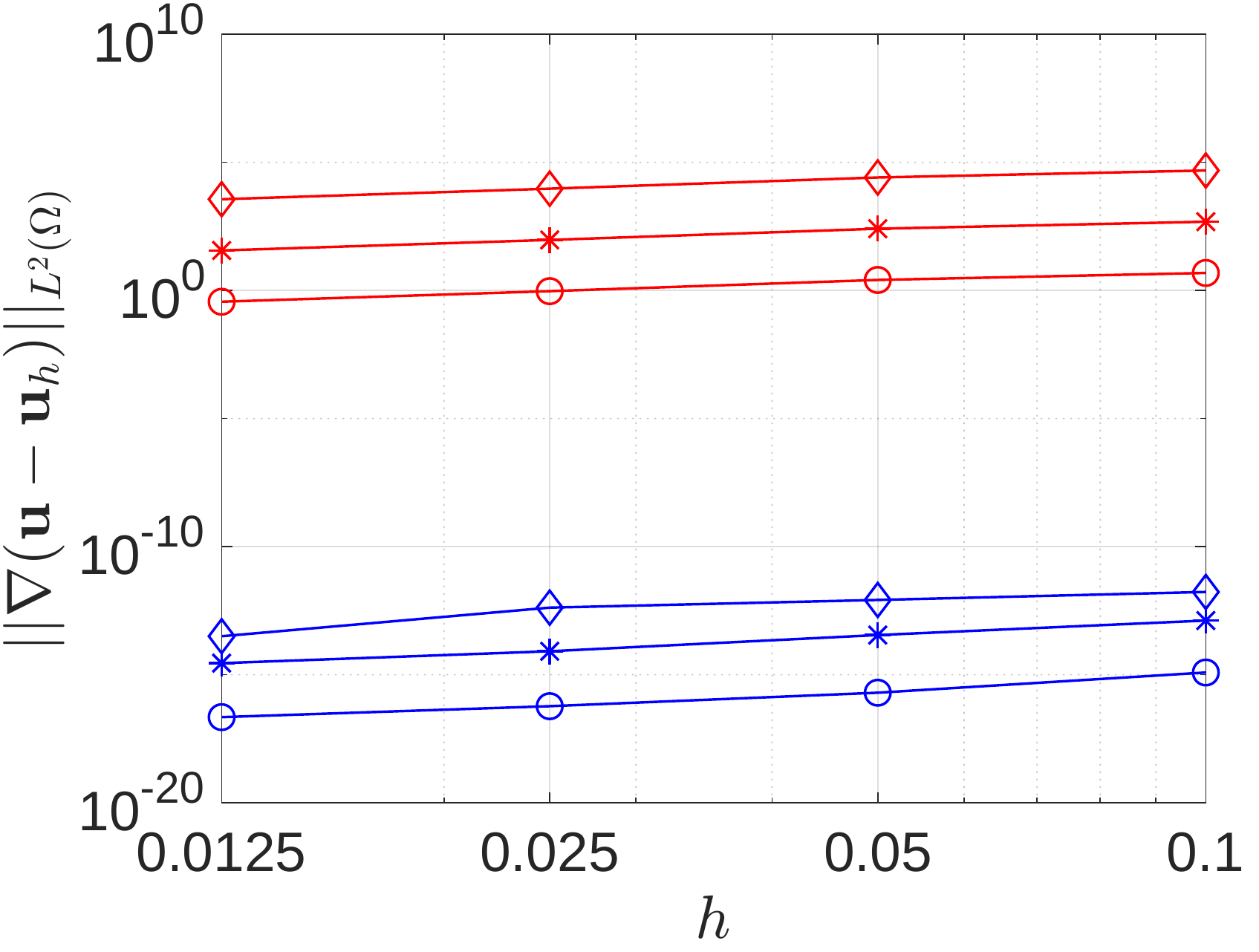}
	\end{subfigure}
	\includegraphics[scale=0.20]{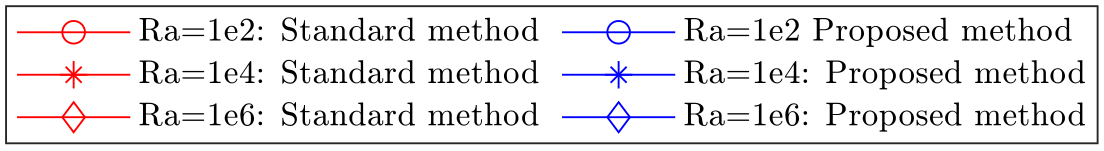}
	\caption{ 
		Example 3: The error of the gradient of the velocity. The proposed methods are compared to the standard approach. %where $\Vk^0$ is not used and $s_b$ is replaced with $s_p$. 
		Left: Method NC ($\BDM_1\times Q_0$). Middle: Method C1 ($P_1\times\RT_0\times Q_0$). Right: Method C2 ($P_1\times\RT_0\times Q_0\times Q^{\Gamma}_0$).
		\label{fig:presrob2} }            
\end{figure}

We see that it is preferable to use the proposed methods in favor of the standard approach. For all methods the error is reduced by several orders of magnitude. 
Both versions of Method C have a small error of the order of $10^{-18}$ and $10^{-14}$, respectively. 
The standard approach is not pressure robust, and the error in the gradient of the velocity increases by the same order of magnitude as Ra. The error for different Ra is also different for the proposed methods, although the errors between different values of Ra seem to get closer as $h\to 0$ for Method C1 and C2, see middle and right panels of Figure \ref{fig:presrob2}.

For Method NC, the error in the divergence is of the order of machine epsilon while the velocity error in the $\bfH^1_0$-norm is large. In order to reduce the error coming from the boundary conditions we choose to scale the boundary penalty parameters $\lambda_{\bfu}$ and $\lambda_{u_n}$ with the parameter Ra, i.e., $\lambda_{\bfu}=\lambda_{u_n}=\text{Ra}\cdot 10^5$. This is a huge number and not necessarily optimal, but it is sufficient to illustrate that when the error from the boundary is small, Method NC is pressure robust, see Figure \ref{fig:presrob2_scale} for the corresponding $\bfH^1$-errors. For Method C1, there is little change however. The two methods behave very differently in this example.

\begin{figure}[ht]  	
	\centering	
	\begin{subfigure}[b]{0.3 \textwidth}  	
		\centering	
		\includegraphics[width=\textwidth]{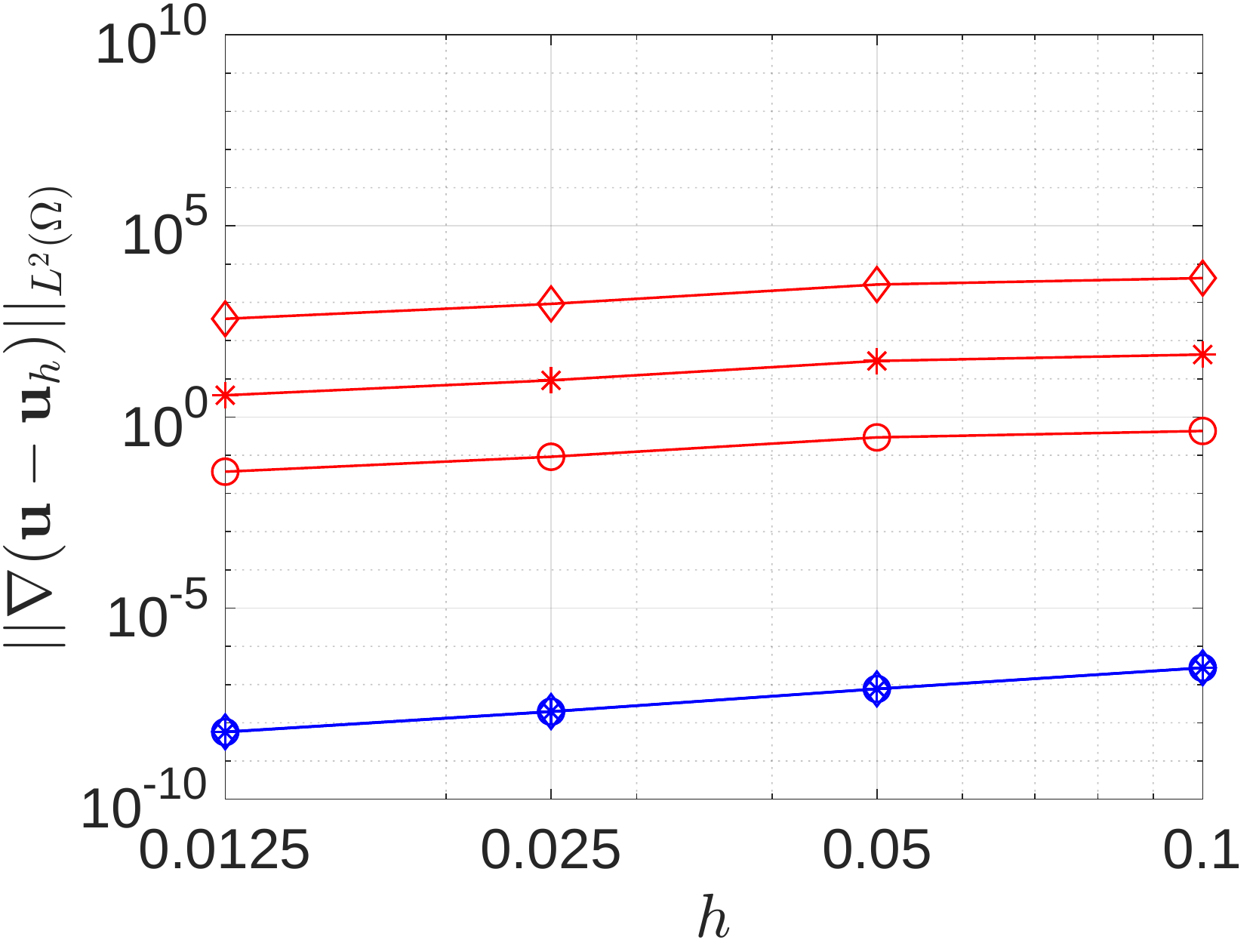}
	\end{subfigure}
	\begin{subfigure}[b]{0.3 \textwidth}  	
		\centering	
		\includegraphics[width=\textwidth]{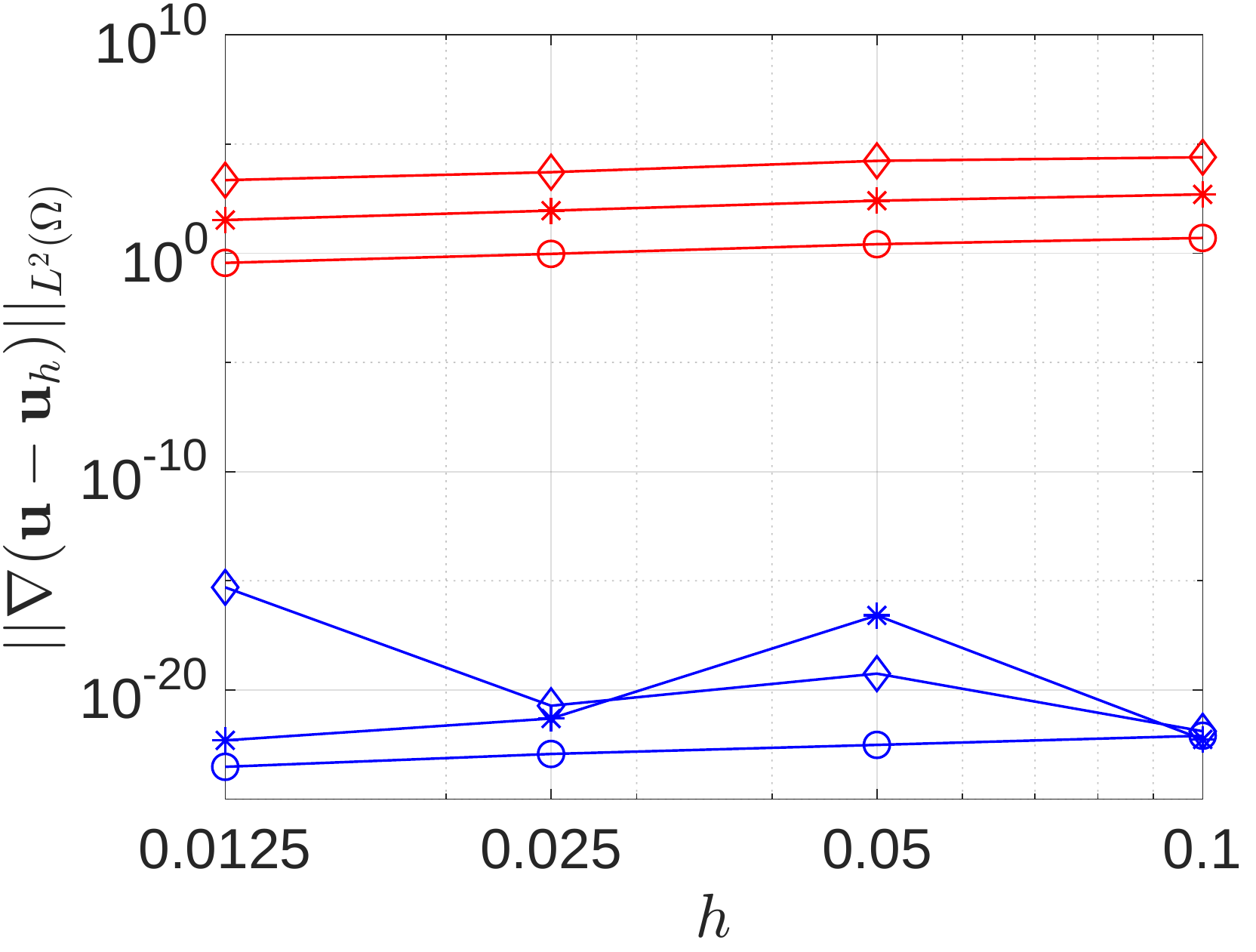}
	\end{subfigure}
	\includegraphics[scale=0.25]{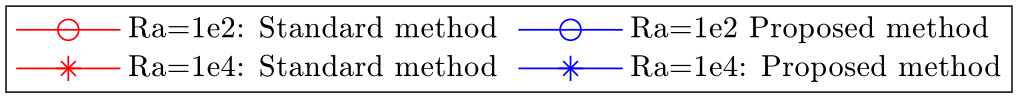}
	\caption{ 
		Example 3: The error of the gradient of the velocity, with $\lambda_{\bfu}=\lambda_{u_n}=10^5 \cdot \text{Ra}$. Proposed methods are compared to the standard approach. % of not using $\Vk^0$ and replacing the $s_b$-forms with $s_p$. 
		Left: Method NC ($\BDM_1\times Q_0$). Right: Method C1 ($P_1\times\RT_0\times Q_0$). 
		\label{fig:presrob2_scale} }
\end{figure}

%We should also mention something about the condition numbers. 
When we in Method NC scale $\lambda_{\bfu}$ with Ra, the condition numbers will also scale with Ra, see Table \ref{tab:ex3_cond}. For Method C2, the condition numbers are the same irrespective of Ra (since we don't scale the Lagrange multiplier variable with $Ra$).
\begin{table}[ht]
    \centering
    \begin{tabular}{|l|l|l|l|}
        \hline
        $h$     & \begin{tabular}[c]{@{}l@{}}Method NC: \\ Ra$=10^2$\end{tabular} & \begin{tabular}[c]{@{}l@{}}Method NC: \\ Ra$=10^4$\end{tabular} & \begin{tabular}[c]{@{}l@{}}Method NC: \\ Ra$=10^6$\end{tabular} \\ \hline
        0.1     & 5.64E+15 & 5.64E+19 & 5.64E+23 \\ 
        0.05    & 1.11E+16 & 1.11E+20 & 1.11E+24 \\ 
        0.025   & 2.21E+16 & 2.21E+20 & 2.21E+24 \\ 
        0.0125  & 4.41E+16 & 4.41E+20 & 4.41E+24 \\ \hline
    \end{tabular}
    \vspace{0.1cm}
    
    \begin{tabular}{|l|l|l|l|}
        \hline
        $h$     & \begin{tabular}[c]{@{}l@{}}Method C1: \\ Ra$=10^2$\end{tabular} & \begin{tabular}[c]{@{}l@{}}Method C1: \\ Ra$=10^4$\end{tabular} & \begin{tabular}[c]{@{}l@{}}Method C1: \\ Ra$=10^6$\end{tabular} \\ \hline
        0.1     & 1.00E+11 & 1.00E+11 & 1.00E+11 \\ 
        0.05    & 2.02E+11 & 2.02E+11 & 2.02E+11 \\ 
        0.025   & 4.08E+11 & 4.08E+11 & 4.08E+11 \\ 
        0.0125  & 8.36E+11 & 8.36E+11 & 8.36E+11 \\ \hline
    \end{tabular}
    \vspace{0.1cm}
    
    \begin{tabular}{|l|l|l|l|}
        \hline
        $h$     & \begin{tabular}[c]{@{}l@{}}Method C2: \\ Ra$=10^2$\end{tabular} & \begin{tabular}[c]{@{}l@{}}Method C2: \\ Ra$=10^4$\end{tabular} & \begin{tabular}[c]{@{}l@{}}Method C2: \\ Ra$=10^6$\end{tabular} \\ \hline
        0.1     & 5.33E+03 & 5.33E+03 & 3.14E+03 \\ 
        0.05    & 2.15E+04 & 2.15E+04 & 2.15E+04 \\ 
        0.025   & 6.98E+04 & 4.78E+04 & 8.97E+04 \\ 
        0.0125  & 3.85E+05 & 3.09E+05 & 3.85E+05 \\ \hline
    \end{tabular}
    \caption{Example 3: Condition numbers for different methods and values of Ra. Method NC ($\BDM_1\times Q_0$) has $\lambda_{\bfu}=10^5 \text{Ra}$, Method C1 ($P_1\times\RT_0\times Q_0$) has $\lambda_{\bfu}=10^5$. Method C2 uses $P_1\times\RT_0\times Q_0\times Q^{\Gamma}_0$ and no penalty.}
    \label{tab:ex3_cond}
\end{table}

\subsubsection{Is incompressibility sufficient for pressure robustness?}\label{sec:numex_eyenorm}
Our proposed methods satisfy the incompressibility condition pointwise up to machine precision. A natural question is whether it is enough to have a pointwise divergence-free velocity field (up to machine precision) in order to achieve pressure robustness when imposing Dirichlet boundary conditions weakly?

In the tables below we label the condition number by $\kappa$. We consider Example 3, and the lowest order discretizations of our proposed methods. 
We compare Method NC and Method C1 in Table \ref{tab:weakweak} (and Table \ref{tab:weakweak_fitted}), while results for Method C2 are presented in Table \ref{tab:weakstrong}. We can see that satisfying the incompressibility condition is not sufficient to achieve pressure robustness if the normal component of the velocity field at the boundary is not prescribed accurately enough. %if the boundary conditions are not satisfied strongly enough. 
We can increase the penalty parameter, however, increasing the penalty increases the condition number. Therefore there is a trade-off: how big can you make penalty, thereby reducing the error from the boundary, before the condition number becomes too big, thereby increasing the error in the solution. 
\begin{table}[ht]
	\resizebox{\textwidth}{!}{
	\begin{tabular}{l|llll|llll}
	& \begin{tabular}[c]{@{}l@{}}Method NC,\\ $h=0.025$\end{tabular} & & &  & \begin{tabular}[c]{@{}l@{}}Method C1,\\ $h=0.025$\end{tabular} & & & \\ \cline{2-9} 
	& \multicolumn{1}{l|}{$ \| \nabla\cdot\mathbf{u}_h\|_{L^{\infty}(\Omega)}$} & \multicolumn{1}{l|}{$ \| \mathbf{u}_h\|_{L^{2}(\Omega)} $} & \multicolumn{1}{l|}{$ \| \nabla\mathbf{u}_h\|_{L^{2}(\Omega)}$} & $\kappa$ & \multicolumn{1}{l|}{$ \| \nabla\cdot\mathbf{u}_h\|_{L^{\infty}(\Omega)}$} & \multicolumn{1}{l|}{$ \| \mathbf{u}_h\|_{L^{2}(\Omega)} $} & \multicolumn{1}{l|}{$ \| \nabla\mathbf{u}_h\|_{L^{2}(\Omega)}$} & \multicolumn{1}{l|}{$\kappa$} \\ \hline
	\multicolumn{1}{|l|}{\begin{tabular}[c]{@{}l@{}}$\lambda_{\mathbf{u}}=\lambda_{u_n}=10^5$\\ \textbf{Ra$=\mathbf{10^2}$}\end{tabular}}  & \multicolumn{1}{l|}{5.0822E-20} & \multicolumn{1}{l|}{4.0794E-08} & \multicolumn{1}{l|}{1.9512E-06} & 2.23E+12 & \multicolumn{1}{l|}{2.2023E-20} & \multicolumn{1}{l|}{3.6336E-08} & \multicolumn{1}{l|}{9.3563E-22} & \multicolumn{1}{l|}{4.08E+11}         \\ \hline
	\multicolumn{1}{|l|}{\begin{tabular}[c]{@{}l@{}}$\lambda_{\mathbf{u}}=\lambda_{u_n}=10^5$\\ \textbf{Ra$=\mathbf{10^6}$}\end{tabular}}  & \multicolumn{1}{l|}{3.3307E-16} & \multicolumn{1}{l|}{0.0004}  & \multicolumn{1}{l|}{0.0195}  & 2.23E+12 & \multicolumn{1}{l|}{3.7470E-16}   & \multicolumn{1}{l|}{0.00036} & \multicolumn{1}{l|}{1.0408E-17} & \multicolumn{1}{l|}{4.08E+11}         \\ \hline\hline
	\multicolumn{1}{|l|}{\begin{tabular}[c]{@{}l@{}}$\lambda_{\mathbf{u}}=\lambda_{u_n}=10^5$Ra\\ \textbf{Ra$=\mathbf{10^2}$}\end{tabular}} & \multicolumn{1}{l|}{2.3823E-22} & \multicolumn{1}{l|}{4.0799E-10} & \multicolumn{1}{l|}{1.9521E-08} & 2.21E+16 & \multicolumn{1}{l|}{5.0293E-22} & \multicolumn{1}{l|}{3.6334E-10} & \multicolumn{1}{l|}{1.1807E-23} & \multicolumn{1}{l|}{4.00E+15}         \\ \hline
	\multicolumn{1}{|l|}{\begin{tabular}[c]{@{}l@{}}$\lambda_{\mathbf{u}}=\lambda_{u_n}=10^5$Ra\\ \textbf{Ra$=\mathbf{10^6}$}\end{tabular}} & \multicolumn{1}{l|}{6.6370E-15} & \multicolumn{1}{l|}{4.0890E-10} & \multicolumn{1}{l|}{1.9716E-08} & 2.21E+24 & \multicolumn{1}{l|}{1.3752E-19} & \multicolumn{1}{l|}{5.2968E-10} & \multicolumn{1}{l|}{1.9054E-21} & \multicolumn{1}{l|}{4.00E+23}         \\ \hline
	\end{tabular}  }
	\caption{Example 3: Results for Method NC ($\BDM_1\times Q_0$) and Method C1 ($P_1\times\RT_0\times Q_0$) for $h=0.025$. The boundary condition on $\Gamma$ is imposed weakly via penalty. }\label{tab:weakweak}
\end{table}
(It is interesting to compare our results on the unfitted mesh (with respect to the line $y=1$) to results on a fitted mesh. We consider the same Method NC and Method C1, imposing boundary conditions weakly but on a fitted mesh (on the entire boundary.) The condition numbers are smaller, but the trend is the same as for the unfitted case of Table \ref{tab:weakweak}; scaling with Ra improves the error from the boundary, but increases the condition number. We refer to Table \ref{tab:weakweak_fitted} for these results.

In Table \ref{tab:weakstrong} we show the results with Method C2 where we impose the boundary conditions weakly on $\Gamma$ via the Lagrange multiplier variable. We see that the $L^2$-error of the velocity is greatly affected by what space we pick for our Lagrange multiplier variable, and especially we see that $Q_0^{\partial\Omega}$ is not sufficient. %Indeed, this makes sense because we know that $\bfu_h\cdot\bfn \in Q_0$ only on faces of our mesh elements on which our unfitted part $\Gamma$ does not necessarily lie. Since $\bfu_h\cdot\bfn\in Q_1$ in general, choosing this space for our Lagrange multiplier improves the velocity $L^2$-error. 
The best results are obtained when we also scale the test and trial Lagrange multiplier variables as $(\xi,\chi)\mapsto (\text{Ra}\xi,\text{Ra}\chi)$. This choice has a similar effect as the choice of scaling the penalty parameter $\lambda_{u_n}$ with Ra which we used in the previous table.
The condition numbers are in general smaller than in Table \ref{tab:weakweak}, where a penalty parameter is used, but we can see the same trend when scaling $(\xi,\chi)\mapsto (\text{Ra}\xi,\text{Ra}\chi)$. 
\begin{table}[ht]
	\centering
	\begin{tabular}{l|llll} & Method C2, $h=0.025$ & & & \\ \cline{2-5}
	& \multicolumn{1}{l|}{$\| \nabla\cdot\mathbf{u}_h\|_{L^{\infty}(\Omega)}$} & \multicolumn{1}{l|}{$ \| \mathbf{u}_h\|_{L^{2}(\Omega)} $} & \multicolumn{1}{l|}{$ \| \nabla\mathbf{u}_h\|_{L^{2}(\Omega)}$} & \multicolumn{1}{l|}{$\kappa$}  \\ \hline 
	\multicolumn{1}{|l|}{ $P_1\times \RT_0\times Q_0\times Q^{\Gamma}_0$: \textbf{Ra$=\mathbf{10^2}$}} & \multicolumn{1}{l|}{1.0658e-14} & \multicolumn{1}{l|}{0.00896}  & \multicolumn{1}{l|}{2.0365e-15} & \multicolumn{1}{l|}{6.98e+04} \\
	\multicolumn{1}{|l|}{ $P_1\times \RT_0\times Q_0\times Q^{\Gamma}_0$: \textbf{Ra$=\mathbf{10^6}$}} & \multicolumn{1}{l|}{8.7312e-11} & \multicolumn{1}{l|}{89.6156}     & \multicolumn{1}{l|}{2.2160e-11} & \multicolumn{1}{l|}{8.97e+04} \\ \hline 
	\multicolumn{1}{|l|}{\begin{tabular}[c]{@{}l@{}} $P_1\times \RT_0\times Q_0\times Q^{\Gamma}_0$ \\ and $(\xi,\chi)\mapsto $Ra$(\xi,\chi)$: \textbf{Ra$=\mathbf{10^2}$}\end{tabular}} & \multicolumn{1}{l|}{7.1054e-15} & \multicolumn{1}{l|}{0.00665}  & \multicolumn{1}{l|}{1.1309e-16} & \multicolumn{1}{l|}{1.55e+05} \\
	\multicolumn{1}{|l|}{\begin{tabular}[c]{@{}l@{}} $P_1\times \RT_0\times Q_0\times Q^{\Gamma}_0$ \\ and $(\xi,\chi)\mapsto $Ra$(\xi,\chi)$: \textbf{Ra$=\mathbf{10^6}$}\end{tabular}} & \multicolumn{1}{l|}{5.8208e-11} & \multicolumn{1}{l|}{66.5144}     & \multicolumn{1}{l|}{1.1628e-12} & \multicolumn{1}{l|}{2.84e+09} \\ \hline \hline
	\multicolumn{1}{|l|}{ $P_1\times \RT_0\times Q_0\times Q^{\Gamma}_1$: \textbf{Ra$=\mathbf{10^2}$}} & \multicolumn{1}{l|}{1.2534e-13} & \multicolumn{1}{l|}{1.3242e-13} & \multicolumn{1}{l|}{8.8632e-14} & \multicolumn{1}{l|}{6.48e+05} \\
	\multicolumn{1}{|l|}{ $P_1\times \RT_0\times Q_0\times Q^{\Gamma}_1$: \textbf{Ra$=\mathbf{10^6}$}} & \multicolumn{1}{l|}{1.0758e-09} & \multicolumn{1}{l|}{1.1077e-09} & \multicolumn{1}{l|}{7.6073e-10} & \multicolumn{1}{l|}{6.48e+05} \\ \hline 
	\multicolumn{1}{|l|}{\begin{tabular}[c]{@{}l@{}} $P_1\times \RT_0\times Q_0\times Q^{\Gamma}_1$ \\ and $(\xi,\chi)\mapsto $Ra$(\xi,\chi)$: \textbf{Ra$=\mathbf{10^2}$}\end{tabular}} & \multicolumn{1}{l|}{7.0786e-16} & \multicolumn{1}{l|}{8.9933e-16} & \multicolumn{1}{l|}{5.0053e-16} & \multicolumn{1}{l|}{4.17e+07}         \\
	\multicolumn{1}{|l|}{\begin{tabular}[c]{@{}l@{}} $P_1\times \RT_0\times Q_0\times Q^{\Gamma}_1$ \\ and $(\xi,\chi)\mapsto $Ra$(\xi,\chi)$: \textbf{Ra$=\mathbf{10^6}$}\end{tabular}} & \multicolumn{1}{l|}{1.0959e-15} & \multicolumn{1}{l|}{3.7830e-12} & \multicolumn{1}{l|}{7.7489e-16} & \multicolumn{1}{l|}{4.14e+11}         \\ \hline
	\end{tabular}
	\caption{Example 3: Results for Method C2 ($P_1\times\RT_0\times Q_0\times Q^{\Gamma}_0$) for $h=0.025$. The boundary condition $\bfu\cdot\bfn=\bfg\cdot\bfn$ on $\Gamma$ is imposed weakly via the Lagrange multiplier variable. }\label{tab:weakstrong}
\end{table}

\subsection{Method C1 in three space dimensions} \label{sec:mC_3D}
The description of Method C changes only slightly when going up one dimension to three space dimensions. The exact sequence picks up an extra intermediate space:
\begin{alignat}{2}
  &[2D]\ \quad\quad\quad  0 &&\hookrightarrow H^1(\Omega) \overset{\crl}{\to} \bfH^{\dive}(\Omega) \overset{\dive}{\to} L^2(\Omega) \to 0 \\
  &[3D]\ \quad\quad\quad 0 &&\hookrightarrow H^1(\Omega) \overset{\nabla}{\to} \bfH^{\crl}(\Omega) \overset{\crl}{\to} \bfH^{\dive}(\Omega) \overset{\dive}{\to} L^2(\Omega) \to 0 \label{eq:3Dcomplex}
\end{alignat}
Setting $\mathcal{H}(\Omega):=\bfH^{\crl}(\Omega)$ with $\crl\bfu:=\nabla\times\bfu$, the method reads identically up to changing edges to faces and $\bfv\cdot\bft \mapsto \bfn\times\bfv$. One exchanges $W_{k+1,h}$ with vector-valued Nédelec edge elements of the first kind, see \cite{defelementDeRham,PTFE}.

The boundary conditions \eqref{eq:altBC0} and \eqref{eq:altBC} change to
\begin{align}
  &\bfn\times \pmb{\omega} = \pmb{\omega}_0,\ \bfu\cdot\bfn = u_0, \text{ or }\\
  &\bfn\times\bfu = \bfu_0,\ p=p_0.
\end{align}
We illustrate Method C1 in three space dimensions via one example here. 

Let $\Omega = B_{2/3}(\mathbf{0})$ be the ball of radius $2/3$ centered at $\mathbf{0}$, and let $\bfu = (z/2,0,0)^T$ be the velocity solution to the Stokes problem with $\mu=1,\ \bff=\mathbf{0}$ and inhomogeneous Dirichlet boundary conditions. With $\mathbf{N}_0\subset \bfH^{\crl}(\Omdh)$ denoting the lowest order Nedelec element of the first kind, we use the triple $\mathbf{N}_0\times\RT_0\times Q_0$ with $\tau_m=\tau_a=\tau_b=1$ and full stabilization. Then Method C produces the velocity as seen in Figure \ref{fig:ex3D}. We see that the divergence free property of the Raviart-Thomas elements are kept by the proposed unfitted discretization and the magnitude of the error in $\dive \bfu$ is of the order of machine epsilon.  
\begin{figure}[ht]
  \centering
  \begin{subfigure}{0.4\textwidth}
    \includegraphics[width=\textwidth]{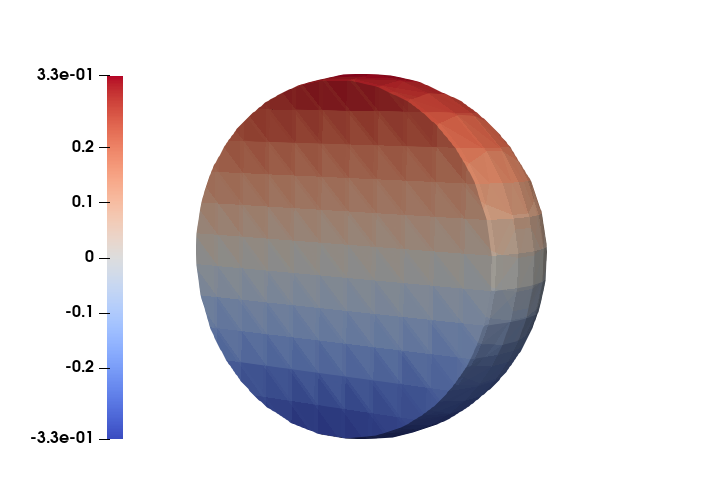}%3
  \end{subfigure} 
  \hfill
  \begin{subfigure}{0.38\textwidth}
    \includegraphics[width=\textwidth]{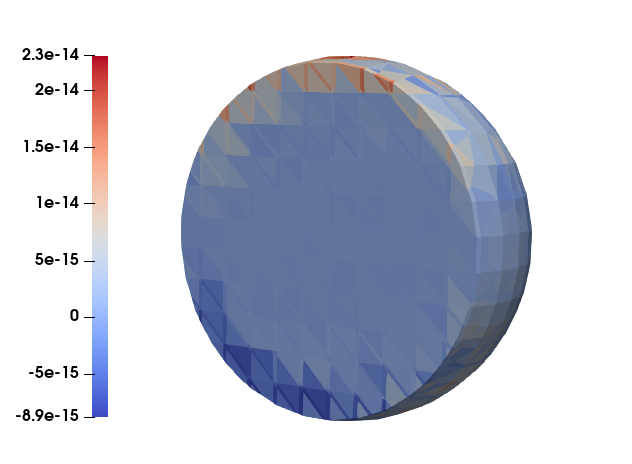}%6
  \end{subfigure}
  \caption{ 
    Example 3D: Method C1 ($\mathbf{N}_0\times\RT_0\times Q_0$) in 3D.
    Left: $x$-component of the velocity. Right: divergence of the velocity.
    \label{fig:ex3D} }                     
\end{figure}

\section{Conclusion}\label{sec:conclusion}
We developed and studied several cut finite element discretizations for the Stokes equations that result in divergence-free velocity approximations. We presented an unfitted discretization, Method NC, based on the non-conforming method of \cite{WangHdiv2007} where essential boundary conditions where enforced using Nitsche's method, see Section~\ref{sec:method_wang}. We also presented two unfitted discretizations for the vorticity-velocity-pressure formulation of the Stokes problem, that we refer to as Method C1 and Method C2, see Section \ref{sec:method_vort}. We obtained best results with Method C2 where $\bfu \cdot \bfn$ was imposed using a Lagrange multiplier method compared to Method C1 where a penalty parameter was used. We obtained optimal convergence order both for the velocity and the pressure, with pointwise divergence-free velocity approximations for Method NC with $\BDM$-elements and for Method C2 with $\RT$-elements. The condition number of the resulting linear systems scaled with respect to mesh size as for standard finite element methods. In our implementation, the assembly and solve of the linear systems associated to Method NC is slower than Method C. Method C has an extra variable for the vorticity (and yet another for Method C2), but Method NC has interior edge terms. With that being said, the assembly and solve speed of course depends on the implementation and factors such as CPU, GPU, type of solver, and so on.
 x
To guarantee divergence-free velocity approximations two conditions where of importance 1) we used ghost penalty stabilization terms that do not perturb the divergence condition; 2) The condition $\int_{\partial \Omega} \bfu \cdot \bfn=\int_{\partial \Omega} \bfg \cdot \bfn = 0$ has to be satisfied.
Our numerical experiments illustrated that a scheme which produces pointwise divergence-free velocity approximations is pressure robust in case $\bfu \cdot \bfn$ is imposed strongly, but not necessarily otherwise. Thus, the weak imposition of Dirichlet boundary conditions creates a challenge in the development of a pressure robust unfitted discretization. We found that when Nitsche's method is used the penalty parameter may have to be chosen very large in order to obtain pressure robustness. In the Lagrange multiplier method the stabilization term is of importance for stability, but perturbs the boundary condition. However, by choosing a larger space for the Lagrange multiplier; we choose $\tilde{k}=k+1$, improved the approximation at the boundary. We also showed that scaling the terms involving the Lagrange multipliers improved the errors from the boundary, though at the price of a larger condition number.

\paragraph{Conflict of interest statement} The authors declare no conflicts of interest. This research was supported by the Swedish Research Council Grant No. 2018-04192, 2022-04808, and the Wallenberg Academy Fellowship KAW 2019.0190.
 
\appendix
\section{: Standard methods}\label{sec:standardm}
We include a short appendix outlining the methods we compare the proposed methods with. In the initial works on CutFEM one stabilizes each unknown, the velocity and the pressure, with standard ghost penalty stabilization terms, and enforces Dirichlet boundary conditions weakly but without ensuring that $\int_{\partial \Omega} \bfu \cdot \bfn=0$. Hence when we either use standard ghost penalty stabilization for each unknown, or choose the test spaces as $\Vk \times \Qk^0$ while not ensuring that $\int_{\partial \Omega} \bfu \cdot \bfn=0$, we refer to the discretization as the standard approach. We remark that since the elements that were used in the early works on CutFEM did not preserve incompressibility condition pointwise these considerations were not of importance. %but for the element pairs used in this paper we show that it is of importance. 

Following the standard approach for Method NC gives: Find $(\bfu_h,p_h)\in \Vk \times \Qk^0$ such that 
\begin{alignat}{2}
    \Af(\bfu_h,\bfv_h) - b(\bfv_h,p_h) &= \FF (\bfv_h),\ &&\text{ for all } \bfv_h\in\Vk, \nonumber \\
     b_0(\bfu_h,q_h) + s_p(p_h,q_h) &= 0, &&\text{ for all } q_h\in\Qk^0. \label{eq:standard_mNC}
\end{alignat}
Here $\Af, b, \FF, s_p$ are defined as in \eqref{eq:Af}-\eqref{eq:f0}, and \eqref{eq:stab-p}. 
Following the standard approach for Method C1 gives: Find $(\omega_h,\bfu_h,p_h)\in \Wk\times \Vk\times \Qkk^0$ such that 
\begin{alignat}{2} % [Primal stab]
	m(\omega_h,\phi_h) - c(\phi_h,\bfu_h) &= (\bfg\cdot\bft,\phi_h)_{\partial\Omega}, \ &&\forall \phi_h\in \Wk, \nonumber \\
	\check{a}(\bfu_h,\bfv_h) + c(\omega_h,\bfv_h) - b(\bfv_h,p_h) &= (\bff,\bfv_h)_{\Omega} + (\lambda_{\bfu_n}h^{-1}\bfg\cdot\bfn,\bfv_h\cdot\bfn)_{\partial\Omega},\ \quad &&\forall \bfv_h\in\Vk, \nonumber \\
	b_0(\bfu_h,q_h)+s_p(p_h,q_h) &= 0, \ &&\forall q_h\in \Qkk^0. \label{eq:standard_mC} 
\end{alignat}
Here $m, c, \check{a}$ refer to \eqref{eq:m}-\eqref{eq:MCpenalty}. 
Following the standard approach for Method C2 gives: Find $(\omega_h,\bfu_h,p_h,\xi_h)\in \Wk\times \Vkg\times \Qkk^0\times\Qkkg$ such that 
\begin{alignat}{2} % [Primal stab]
	m(\omega_h,\phi_h) - c(\phi_h,\bfu_h) &= (\bfg\cdot\bft,\phi_h)_{\partial\Omega}, \ &&\forall \phi_h\in \Wk, \nonumber \\
	c(\omega_h,\bfv_h) - b_0(\bfv_h,p_h) + (\xi_h,\bfv_h\cdot\bfn)_{\Gamma} &= (\bff,\bfv_h)_{\Omega},\ \quad &&\forall \bfv_h\in\Vkz, \nonumber\\
	b_0(\bfu_h,q_h)+s_p(p_h,q_h) &= 0, \ &&\forall q_h\in \Qkk^0, \nonumber\\
	(\bfu_h\cdot\bfn,\chi_h)_{\Gamma}-s(\xi_h,\chi_h) &= (\bfg\cdot\bfn,\chi_h)_{\Gamma}, \ &&\forall \chi_h\in \Qkkg. \label{eq:standard_MClagmult}
\end{alignat}
Here $s$ refers to \eqref{eq:stablag}.

\section{: Auxiliary tables and figures}
In this appendix we include some auxiliary tables. They are referred to in the main text.

\begin{table}[ht]
    \centering
	\begin{tabular}{|l|l|l|l|l|}
        \hline
        $h$     & \begin{tabular}[c]{@{}l@{}}Method NC (BDM1): \\ no stabilization\end{tabular} & Method NC (BDM1) & \begin{tabular}[c]{@{}l@{}}Method NC (RT1):\\ no stabilization\end{tabular} & Method NC (RT1)  \\ \hline
        0.1     & 1.15723E-15  & 6.92112E-16  & 1.89594E-15   & 2.46866E-14  \\ 
        0.05    & 3.37823E-15  & 1.46118E-15  & 3.44825E-15   & 2.69386E-14  \\ 
        0.025   & 7.19689E-15  & 3.65184E-15  & 6.82505E-15   & 2.04061E-14  \\ 
        0.0125  & 1.56563E-14  & 6.21536E-15  & 1.36712E-14   & 2.09123E-14  \\ 
        0.00625 & 8.96546E-14  & 2.40478E-13  & 4.55812E-13   & 2.58988E-12  \\ \hline
    \end{tabular}
    \vspace{0.2cm}
    
	\hspace{-6cm}
	\begin{tabular}{|l|l|l|}
        \hline
        $h$     & \begin{tabular}[c]{@{}l@{}}Method C1 (RT1):\\ no stabilization\end{tabular} & Method C1 (RT1) \\ \hline
        0.1     & 1.82643E-15  & 2.05161E-14  \\ 
        0.05    & 3.49482E-15  & 2.86811E-14  \\ 
        0.025   & 6.84685E-15  & 2.10946E-14  \\ 
        0.0125  & 1.36738E-14  & 2.06419E-14  \\ 
        0.00625 & -            & 8.81791E-11  \\ \hline
    \end{tabular}
    \caption{Example 1: $L^2$-error $\|\dive \bfu- \dive \bfu_h\|_{L^2(\Omega)}$ for different mesh sizes $h$, $\lambda_{\bfu}=\lambda_{u_n}=4000$. For the finest mesh, i.e. $h=0.00625$, we had to use MUMPS instead of UMFPACK, resulting in significantly worse divergence error.
    \label{tab:ex1_div_L2}}
\end{table}

\begin{table}[ht]
    \centering
    \begin{tabular}{|l|l|l|l|l|}
        \hline
        $h$     & \begin{tabular}[c]{@{}l@{}}Method NC (BDM1): \\ no stabilization\end{tabular} & Method NC (BDM1) & \begin{tabular}[c]{@{}l@{}}Method NC (RT1):\\ no stabilization\end{tabular} & Method NC (RT1)\\ \hline
        0.1     & 6.40599E-14  & 2.44249E-15  & 1.24345E-14 &  2.43139E-13 \\ 
        0.05    & 2.19824E-13  & 6.21725E-15  & 4.44089E-14  & 5.16032E-13 \\ 
        0.025   & 1.2168E-13   & 1.42109E-14  & 6.39488E-14  & 4.91607E-13 \\ 
        0.0125  & 3.84803E-13  & 2.84217E-14  & 1.49214E-13 &  5.11591E-13 \\ 
        0.00625 & 9.80893E-12  & 1.53653E-11  & 1.22526E-10  & 1.24654E-09 \\ \hline
    \end{tabular}
    \vspace{0.2cm}
    
    \hspace{-6cm}
    \begin{tabular}{|l|l|l|}
        \hline
        $h$     & \begin{tabular}[c]{@{}l@{}}Method C1 (RT1):\\ no stabilization\end{tabular} & Method C1 (RT1) \\ \hline
        0.1     & 1.24345E-14  & 1.7053E-13  \\ 
        0.05    & 3.86358E-14  & 4.33875E-13  \\ 
        0.025   & 6.03961E-14  & 3.75255E-13  \\ 
        0.0125  & 1.42109E-13  & 6.83453E-13  \\ 
        0.00625 & -            & 2.30028E-08  \\ \hline
    \end{tabular}
    \caption{Example 1: Max-norm error $\|\dive \bfu- \dive \bfu_h\|_{L^\infty(\Omega)}$ for different mesh sizes $h$, $\lambda_{\bfu}=\lambda_{u_n}=4000$. For the finest mesh, i.e. $h=0.00625$, we had to use MUMPS instead of UMFPACK, resulting in significantly worse divergence error.
    \label{tab:ex1_div_Linf}}
\end{table}

\begin{table}[ht]
	\centering
	\resizebox{\textwidth}{!}{
	\begin{tabular}{|l|l|l|l|l|l|l|}
	\hline
	h      & $\|p-p_h\|_{L^2(\Omega)}$ & $p_h$-convergence & $\|\bfu-\bfu_h\|_{L^2(\Omega)}$ & $\bfu_h$-convergence & $\|\dive(\bfu-\bfu_h)\|_{L^\infty(\Omega)}$ & Condition number \\ \hline
	0.1    & 0.730942  & -        & 0.104193   & -        & 1.15463E-14 & 5.74E+03     \\
	0.05   & 0.365635  & 0.999352 & 0.0456878  & 1.18938  & 1.12133E-14 & 1.38E+04     \\
	0.025  & 0.141565  & 1.36894  & 0.0162839  & 1.48837  & 1.06581E-14 & 2.36E+04     \\
	0.0125 & 0.0527105 & 1.4253   & 0.00565044 & 1.52701  & 3.19744E-14 & 7.55E+04     \\ \hline
	\end{tabular}}
	\caption{Example 1: Method C2 using $P_1\times \RT_0\times Q_0\times Q_0^{\Gamma}$. Dirichlet boundary conditions are imposed weakly using a Lagrange multiplier in the space $Q_0^{\Gamma}$. }
	\label{tab:fourfield1}
\end{table}
\begin{table}[ht]
	\centering
	\resizebox{\textwidth}{!}{
	\begin{tabular}{|l|l|l|l|l|l|l|}
	\hline
	h      & $\|p-p_h\|_{L^2(\Omega)}$ & $p_h$-convergence & $\|\bfu-\bfu_h\|_{L^2(\Omega)}$ & $\bfu_h$-convergence & $\|\dive(\bfu-\bfu_h)\|_{L^\infty(\Omega)}$ & Condition number \\ \hline
	0.1    & 0.112782    & -       & 0.0173769   & -       & 2.2804E-13  & 1.78E+06     \\
	0.05   & 0.0159967   & 2.8177  & 0.00241831  & 2.8451  & 4.83613E-13 & 5.20E+06     \\
	0.025  & 0.00268519  & 2.57467 & 0.000450208 & 2.42534 & 4.93605E-13 & 1.55E+07     \\
	0.0125 & 0.000519604 & 2.36954 & 0.000100538 & 2.16285 & 7.22977E-13 & 5.93E+07     \\ \hline
	\end{tabular}}
	\caption{Example 1: Method C2 using $P_2\times \RT_1\times Q_1\times Q_1^{\Gamma}$. Dirichlet boundary conditions are imposed weakly using a Lagrange multiplier in the space $Q_1^{\Gamma}$. }
	\label{tab:fourfield2}
\end{table}

\begin{table}[ht]
	\resizebox{\textwidth}{!}{
	\begin{tabular}{l|llll|llll}
	& \begin{tabular}[c]{@{}l@{}}Method NC,\\ $h=0.025$\end{tabular} & (fitted) & &  & \begin{tabular}[c]{@{}l@{}}Method C1,\\ $h=0.025$\end{tabular} & (fitted) & & \\ \cline{2-9} 
	& \multicolumn{1}{l|}{$ \| \nabla\cdot\mathbf{u}_h\|_{L^{\infty}(\Omega)}$} & \multicolumn{1}{l|}{$ \| \mathbf{u}_h\|_{L^{2}(\Omega)} $} & \multicolumn{1}{l|}{$ \| \nabla\mathbf{u}_h\|_{L^{2}(\Omega)}$} & $\kappa$ & \multicolumn{1}{l|}{$ \| \nabla\cdot\mathbf{u}_h\|_{L^{\infty}(\Omega)}$} & \multicolumn{1}{l|}{$ \| \mathbf{u}_h\|_{L^{2}(\Omega)} $} & \multicolumn{1}{l|}{$ \| \nabla\mathbf{u}_h\|_{L^{2}(\Omega)}$} & \multicolumn{1}{l|}{$\kappa$} \\ \hline
	\multicolumn{1}{|l|}{\begin{tabular}[c]{@{}l@{}}$\lambda_{\mathbf{u}}=\lambda_{u_n}=10^5$\\ \textbf{Ra$=\mathbf{10^2}$}\end{tabular}}  & \multicolumn{1}{l|}{2.7105E-20} & \multicolumn{1}{l|}{4.6872E-07} & \multicolumn{1}{l|}{8.1299E-06} & 5.75E+11 & \multicolumn{1}{l|}{3.4594E-05} & \multicolumn{1}{l|}{5.3180E-07} & \multicolumn{1}{l|}{3.5396E-06} & \multicolumn{1}{l|}{1.03E+10}         \\ \hline
	\multicolumn{1}{|l|}{\begin{tabular}[c]{@{}l@{}}$\lambda_{\mathbf{u}}=\lambda_{u_n}=10^5$\\ \textbf{Ra$=\mathbf{10^6}$}\end{tabular}}  & \multicolumn{1}{l|}{2.7756E-16} & \multicolumn{1}{l|}{0.0047}  & \multicolumn{1}{l|}{0.0813}  & 5.75E+11 & \multicolumn{1}{l|}{0.3459}   & \multicolumn{1}{l|}{0.0053} & \multicolumn{1}{l|}{0.0354} & \multicolumn{1}{l|}{1.03E+10}         \\ \hline\hline
	\multicolumn{1}{|l|}{\begin{tabular}[c]{@{}l@{}}$\lambda_{\mathbf{u}}=\lambda_{u_n}=10^5$Ra\\ \textbf{Ra$=\mathbf{10^2}$}\end{tabular}} & \multicolumn{1}{l|}{3.1764E-22} & \multicolumn{1}{l|}{4.6873E-09} & \multicolumn{1}{l|}{8.1323E-08} & 5.53E+15 & \multicolumn{1}{l|}{3.4594E-07} & \multicolumn{1}{l|}{5.3180E-09} & \multicolumn{1}{l|}{3.5396E-08} & \multicolumn{1}{l|}{1.03E+12}         \\ \hline
	\multicolumn{1}{|l|}{\begin{tabular}[c]{@{}l@{}}$\lambda_{\mathbf{u}}=\lambda_{u_n}=10^5$Ra\\ \textbf{Ra$=\mathbf{10^6}$}\end{tabular}} & \multicolumn{1}{l|}{1.6652E-15} & \multicolumn{1}{l|}{4.6873E-09} & \multicolumn{1}{l|}{8.1323E-08} & 5.53E+23 & \multicolumn{1}{l|}{3.4594E-07} & \multicolumn{1}{l|}{5.3180E-09} & \multicolumn{1}{l|}{3.5396E-08} & \multicolumn{1}{l|}{1.03E+16}         \\ \hline\hline
	\multicolumn{1}{|l|}{\begin{tabular}[c]{@{}l@{}}Strong BC, $\lambda_{\mathbf{u}}=1$\\ \textbf{Ra$=\mathbf{10^2}$}\end{tabular}}  & \multicolumn{1}{l|}{6.7954E-18} & \multicolumn{1}{l|}{2.0053E-16} & \multicolumn{1}{l|}{4.1830E-15} & 2.04E+06 & \multicolumn{1}{l|}{2.9258E-25} & \multicolumn{1}{l|}{2.8336E-16} & \multicolumn{1}{l|}{7.1644E-27} & \multicolumn{1}{l|}{7.66E+04}         \\ \hline
	\multicolumn{1}{|l|}{\begin{tabular}[c]{@{}l@{}}Strong BC, $\lambda_{\mathbf{u}}=1$\\ \textbf{Ra$=\mathbf{10^6}$}\end{tabular}}  & \multicolumn{1}{l|}{2.5372E-13} & \multicolumn{1}{l|}{3.9319E-12}  & \multicolumn{1}{l|}{4.5678E-11}  & 2.04E+06 & \multicolumn{1}{l|}{4.2443E-20}   & \multicolumn{1}{l|}{1.3475E-12} & \multicolumn{1}{l|}{1.0277E-21} & \multicolumn{1}{l|}{7.66E+04}         \\ \hline
	\end{tabular}  }
	\caption{Example 3: Results for Method NC ($\BDM_1\times Q_0$) and Method C1 ($P_1\times\RT_0\times Q_0$) for $h=0.025$ on a fitted mesh. Shown in the top four rows is when the boundary condition is imposed weakly everywhere via penalty. In the bottom two rows, the normal component of the velocity is imposed strongly. Note that Method C1 does not have a penalty term for imposing the tangential component of the velocity, since it is imposed as a Neumann condition.}\label{tab:weakweak_fitted}
\end{table}

\begin{figure}[ht]  	
	\centering	
	\begin{subfigure}[b]{0.45 \textwidth}  	
		\centering	
		\includegraphics[width=\textwidth]{presrob_RT1_unfitted_methodC_0.pdf}
			\end{subfigure}
	\begin{subfigure}[b]{0.45 \textwidth}  	
		\centering	
		\includegraphics[width=\textwidth]{presrob_RT1_unfitted_methodC_2e3.pdf}
			\end{subfigure}\\
	\begin{subfigure}[b]{0.45 \textwidth}  	
		\centering	
		\includegraphics[width=\textwidth]{presrob_RT1_unfitted_bad_0.pdf}
	\end{subfigure}
	\begin{subfigure}[b]{0.45 \textwidth}  	
		\centering	
		\includegraphics[width=\textwidth]{presrob_RT1_unfitted_bad_2e3.pdf}
	\end{subfigure}	\caption{ 
		Example 2: The magnitude of the approximate velocity field using Method C1 ($P_2\times\RT_1\times Q_1$), $\Lambda=0$ in the left panels and $\Lambda=2000$ in the right panels. 
		Top:  Results using the proposed method.
		Bottom: Results using standard approach. %a standard Lagrange multiplier approach to set the mean value of functions in $\Qkk$ and replacing the stabilization term $s_b$ with $s_p$.
		\label{fig:presrob_RT1_unfitted} }            
\end{figure}

\section*{Acknowledgments}
The authors are grateful to the anonymous reviewers for carefully reading our manuscript and providing valuable comments that helped us to improve this work.

%\cleardoublepage
\addcontentsline{toc}{section}{Bibliography}
\bibliographystyle{spmpsci}
\bibliography{references.bib}

\end{document}